\documentclass[reqno,12pt]{amsart}%
\usepackage{amsfonts}
\usepackage{amsmath}
\usepackage{amssymb}
\usepackage{graphicx}%
\usepackage{enumitem} 
\usepackage{ulem}
\makeatletter
\@addtoreset{equation}{section}
\makeatother

\newtheorem{theorem}{Theorem}[section]

\newtheorem{corollary}[theorem]{Corollary}

\newtheorem{lemma}[theorem]{Lemma}

\newtheorem{proposition}[theorem]{Proposition}
\newtheorem{remark}[theorem]{Remark}

\makeatletter
\def\@makefnmark{}
\makeatother

\begin{document}

\title[Existence of  extremals   for the high order HSM inequalities ]{Existence and symmetry of  extremals for the high order Hardy-Sobolev-Maz'ya inequalities}
\author{Guozhen Lu}
\address{Department of Mathematics\\
University of Connecticut\\
Storrs, CT 06269, USA}
\email{guozhen.lu@uconn.edu}

\author{Chunxia Tao}
\address{School of Mathematics and statistics\\
Beijing Technology And Business University\\
Beijing, 100048, China}
\email{taochunxia01@126.com}

\thanks{The first author was supported by a Simons Collaboration grant  and a Simons Fellowship from the Simons Foundation. The second author was supported by the National Natural Science Foundation of China (No.12201016) and a grant from
Beijing Technology and Business University (QNJJ2022-03).}

\maketitle

\begin{abstract}

In this article, we establish the existence of an extremal function for the  k-th order critical Hardy-Sobolev-Maz'ya (HSM) inequalities on the upper half space $\mathbb{R}^{n+1}_{+}$
when $k\ge 2$ and $n\geq 2k+2$:
 
$$\int_{\mathbb{R}^{n}_{+}}|\nabla^{k}u|^2dx-\prod_{i=1}^{k}\frac{\left(2i-1\right)^2}{4}\int_{\mathbb{R}^{n}_{+}}\frac{u^2}{x_1^{2k}}dx\geq C_{n,k,\frac{2n}{n-2k}} \left(\int_{\mathbb{R}^{n}_{+}}|u|^{\frac{2n}{n-2k}}dx\right)^{\frac{n-2k}{n}}.
$$

The analysis of this extremal problem is challenging due to the presence of the higher order derivatives, the  lack of translation invariance, the inapplicability of rearrangement techniques on the upper half-space, and the presence of a Hardy singularity along the boundary. To overcome these difficulties, instead of directly considering the HSM inequality on the upper half space, we establish the existence of an extremal for its equivalent version: Poincar\'e-Sobolev inequality on the hyperbolic space. We develop a  novel duality theory of the minimizing sequences, the concentration-compactness principle for radial functions in the hyperbolic setting, which combines with the Helgason-Fourier analysis and the Riesz rearrangement inequality on the hyperbolic space, to resolve the lack of compactness issue. As an application, we also obtain the existence of positive symmetric solutions for the high order Brezis-Nirenberg equation on the entire hyperbolic space associated with the GJMS operators $P_k$ (i.e., when $k\ge 2$):
$$
P_{k}\left(f\right)-\alpha f=|f|^{p-2}f
$$
at the critical situation
$\alpha=\prod\limits_{i=1}^{k}\frac{\left(2i-1\right)^2}{4}$ when either $2k+2\leq n$ and $p=\frac{2n}{n-2k}$ or $2k<n$ and $2<p<\frac{2n}{n-2k}$.

\end{abstract}

\section{Introduction}
The classical Hardy-Sobolev-Maz'ya (HSM) inequality provides a refinement of both the  Sobolev and the Hardy inequalities on the upper half space $\mathbb{R}^n_+$. It reads as follows: for $n>2k$, $2<p\leq \frac{2n}{n-2k}$ and $\gamma=\frac{\left(n-2k\right)p}{2}-n$, there exists some positive constant $C_{n,k,p}$ such that for each $u\in C_{c}^{\infty}\left(\mathbb{R}^{n}_{+}\right)$, there holds
\begin{equation}\label{HHSM}
\int_{\mathbb{R}^{n}_{+}}|\nabla^{k}u|^2dx-\prod_{i=1}^{k}\frac{\left(2i-1\right)^2}{4}\int_{\mathbb{R}^{n}_{+}}\frac{u^2}{x_1^{2k}}dx\geq C_{n,k,p} \left(\int_{\mathbb{R}^{n}_{+}}x_1^\gamma|u|^{p}dx\right)^{\frac{2}{p}}.
\end{equation}

The first order case ($k=1$) was established by Maz'ya \cite{Mazya}, while the higher order case was proved by Lu and Yang \cite{LY19}. In particular, when $p=\frac{2n}{n-2k}$ (corresponding to $\gamma=0$), inequality \eqref{HHSM} is referred to as the critical HSM inequality:
\begin{equation}\label{AHSM}
\int_{\mathbb{R}^{n}_{+}}|\nabla^{k}u|^2dx-\prod_{i=1}^{k}\frac{\left(2i-1\right)^2}{4}\int_{\mathbb{R}^{n}_{+}}\frac{u^2}{x_1^{2k}}dx\geq C_{n,k,\frac{2n}{n-2k}} \left(\int_{\mathbb{R}^{n}_{+}}|u|^{\frac{2n}{n-2k}}dx\right)^{\frac{n-2k}{n}}.
\end{equation}

 Lu and Yang \cite{LY19} developed Fourier analysis  techniques on the hyperbolic space which is a noncompact complete Riemannian manifold to establish the HSM inequalities for higher order derivatives on half spaces. (see also Flynn et al \cite{FlynnLuYang} and \cite{LuYang-AIM} for the other cases of symmetric spaces of rank one.)  In fact, they derived the following  Poincar\'e-Sobolev inequalities for the GJMS operators on hyperbolic spaces  $\mathbb{B}^n$: for $n>2k$ and $2<p\leq \frac{2n}{n-2k}$, there exists some constant $C_{n,k,p}$ such that for each $f\in C_{c}^{\infty}\left(\mathbb{B}^n\right)$, there holds

\begin{equation}\label{BPS}
\int_{\mathbb{B}^n}P_k\left(f\right)fdV-\prod_{i=1}^{k}\frac{\left(2i-1\right)^2}{4}\int_{\mathbb{B}^n}|f|^2dV\geq C_{n,k,p}\left(\int_{\mathbb{B}^n}|f|^{p}dV\right)^{\frac{2}{p}},
\end{equation}
where $P_k$ is the GJMS operator on the hyperbolic space  $\mathbb{B}^n$. The precise definition of this operator, along with relevant background on hyperbolic space, is provided in Section \ref{Preliminaries}.
By establishing the equivalence of the Poincar\'e-Sobolev inequalities on $\mathbb{B}^n$ and the HSM inequality on half spaces, they derived the higher order HSM inequality. This approach provides a technically powerful route to the HSM inequality, allowing the higher order HSM inequality to be obtained in a unified and indirect manner via its hyperbolic counterpart.
Motivated by this equivalence, in this article,  we first study the existence of the extremal functions for the Poincar\'e-Sobolev inequalities on the hyperbolic space   $\mathbb{B}^n$ and use this framework to establish the existence of extremal functions for the corresponding HSM inequalities.
\medskip

The $k$-th order Sobolev inequality on the hyperbolic space $\mathbb{B}^n$ reads as follows:
\begin{equation}\label{BSobolev}
\int_{\mathbb{B}^n}P_k\left(f\right)fdV\geq S_{n,k}\left(\int_{\mathbb{B}^n}|f|^{\frac{2n}{n-2k}}dV\right)^{\frac{n-2k}{n}},
\end{equation}
where $f\in C_{c}^{\infty}\left(\mathbb{B}^n\right)$, $1\leq k <\frac{n}{2}$, $S_{n,k}$ is the sharp constant of $k$-th order Sobolev  inequality on $\mathbb{R}^n$.
(see Hebey \cite{Hebey} and Liu \cite{Liu}). In \cite{LY19}, Lu and Yang  gave an alternative way to prove \eqref{BSobolev}. In fact, let $g(x)=\big(\frac{2}{1-|x|^2}\big)^{\frac{n}{2}-k}f(x)$, using the conformal transformation laws (see Lemma 5.1 of \cite{LY19}) $$P_k(f)=\big(\frac{2}{1-|x|^2}\big)^{-\frac{n}{2}-k}(-\Delta)^k\big((\frac{2}{1-|x|^2})^{\frac{n}{2}-k}f(x)\big),$$
we can see that the $k$-th order Sobolev inequality on the hyperbolic space $\mathbb{B}^n$ is equivalent to the following $k$-th order  Sobolev inequality on the unit ball of $\mathbb{R}^n$:
\begin{equation}\label{BallSobolev}
\int_{\mathbb{B}^n}(-\Delta)^k g\cdot gdx\geq S_{n,k}\left(\int_{\mathbb{B}^n}|g|^{\frac{2n}{n-2k}}dx\right)^{\frac{n-2k}{n}}.
\end{equation}
Obviously, the $k$-th order Sobolev inequality on the unit ball of $\mathbb{R}^n$ has no extremals using translation, dilation and classification of extremals of Sobolev inequality in $\mathbb{R}^n$ (see Talenti \cite{Talenti} and Aubin \cite{Aubin} and also Proposition 1.43 of \cite{Willem} for the first order, and  Lieb \cite{Lieb} forthe  high-order case). Hence the $k$-th order Sobolev inequality on the hyperbolic space $\mathbb{B}^n$ does not admit any extremal function.
\medskip

On the other hand, we have the following $k$-th Poincar\'e inequality on the hyperbolic space $\mathbb{B}^n$:
\begin{equation}\label{Bpoincare}
\int_{\mathbb{B}^n}P_k\left(f\right)fdV\geq \prod_{i=1}^{k}\frac{\left(2i-1\right)^2}{4}\left(\int_{\mathbb{B}^n}|f|^{2}dV\right),
\end{equation}
where $f\in C_{c}^{\infty}\left(\mathbb{B}^n\right)$, and the constant $\prod_{i=1}^{k}\frac{\left(2i-1\right)^2}{4}$ is sharp. We need to note that the sharp constants cannot be attained.    Combining $k$-th order Sobolev inequality and $k$-th order Poincar\'e inequality on the hyperbolic space $\mathbb{B}^n$, there must exist some refinement of both of these two kinds of sharp inequalities. The refinement is simply demonstrated by the high-order Poincare-Sobolev inequality on the hyperbolic space \eqref{BPS}. However, the problem for the existence of extremal of high-order Poincare-Sobolev inequality on the hyperbolic space still remains open.

For the first order critical HSM inequality, Hebey \cite{Hebey} showed that the best constant $C_{n,1,\frac{2n}{n-2}}$ is strictly smaller than the Sobolev constant $S_{n,1}$ for $n\geq 4$. Tertikas and Tintarev  \cite{TT} further proved that the existence of the extremal function for $n\geq 4$. On the other hand, Benguria, Frank and Loss \cite{BFL} demonstrated that the best constant $C_{n,1,\frac{2n}{n-2}}$ is the same as the Sobolev constant $S_{n,1}$ and the best constant $C_{n,1,\frac{2n}{n-2}}$ is not achieved when  $n=3$. For the high order critical case, Lu and Yang    proved in \cite {LY22}  that the best constant $C_{n,k,\frac{2n}{n-k}}$ for the kth-order HSM inequality coincides with the $k$-th Sobolev contant $S_{n,k}$ if $n=2k+1$ and the best constant $C_{n,k,\frac{2n}{n-k}}$ is strictly smaller than $k$-th Sobolev contant $S_{n,k}$ if $n\geq 2k+2$. However, the attainability of the best constant $C_{n,k,\frac{2n}{n-2k}}$ remains open for $n\ge 2k+2$ when $k\ge 2$. In \cite{TT}, the authors employed a clever transformation that reduces the HSM inequality to a weighted Sobolev inequality on the upper half-space, and by applying the concentration compactness principle, they established the existence of extremal functions in the first order case $k=1$. Due to the increased complexity of higher order operators, such a transformation is no longer available in the higher order setting.
\medskip

Our first main result concerns the existence of extremal functions for the critical higher order Poincar\'e-Sobolev inequality.
We define the Hilbert space $H\left(\mathbb{B}^n\right)$ by the completion of $C_c^{\infty}\left(\mathbb{B}^n\right)$ under the norm
$$\|f\|_{H\left(\mathbb{B}^n\right)}=\left(\int_{\mathbb{B}^n}P_k\left(f\right)fdV-\prod_{i=1}^{k}\frac{\left(2i-1\right)^2}{4}\int_{\mathbb{B}^n}|f|^2dV\right)^{\frac{1}{2}}.$$

It is noted that $\|f\|_{H\left(\mathbb{B}^n\right)}$ is indeed a norm by using the Poincare-Sobolev inequality.

\begin{theorem}\label{EPS} Assume that $p=\frac{2n}{n-2k}$ and $n\geq2k+2$.
    Then there is a positive function $f_0\in H\left(\mathbb{B}^n\right)$ which achieves the equality 
    for  the inequality \eqref{BPS}. Furthermore, the extremal function $f_0$ must be radially symmetric and monotone decreasing about some point $P\in \mathbb{B}^n$, that is, $f_0$ is a constant on the geodesic sphere centered at $P\in \mathbb{B}^n$ and radially decreasing about the geodesic distance from P. 
\end{theorem}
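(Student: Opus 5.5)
We take a minimizing sequence for the Poincar\'e--Sobolev quotient
$$C_{n,k}:=\inf\Big\{\|f\|_{H(\mathbb{B}^n)}^2:\ \|f\|_{L^{p}(\mathbb{B}^n)}=1\Big\},\qquad p=\tfrac{2n}{n-2k},$$
and first reduce it to radial, nonnegative, decreasing functions. Through the Helgason--Fourier transform, the quadratic form $\|f\|_{H}^2$ becomes multiplication by the symbol $\prod_{i=1}^k\big(\frac{(2i-1)^2}{4}+\lambda^2\big)-\prod_{i=1}^k\frac{(2i-1)^2}{4}$. This lets us rewrite the problem in dual form, $\sup\{\langle g, T g\rangle : \|g\|_{L^{p'}}=1\}$, where $T$ is the inverse of the operator above. $T$ is convolution on $\mathbb{B}^n$ with a kernel $K(\rho)$ that depends only on the geodesic distance. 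We will show that $K$ is positive and radially decreasing, by writing $T$ as a superposition of resolvents $(-\Delta_{\mathbb{H}}-\frac{(n-1)^2}{4}+\mu)^{-1}$, whose kernels are positive and decreasing. The Riesz rearrangement inequality on hyperbolic space then shows that replacing $g$ by its symmetric decreasing rearrangement $g^*$ about the origin does not decrease $\langle g,Tg\rangle$. The duality relation $f=Tg$, $g=|f|^{p-2}f$ transfers this back to the primal problem, so we may take a minimizing sequence $f_m=Tg_m$ consisting of radial, positive, decreasing functions with $\|f_m\|_{L^p}=1$.

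Next comes compactness for this radial sequence, via a concentration--compactness principle for radial functions on $\mathbb{B}^n$. Vanishing at infinity is excluded by a Strauss-type radial decay estimate on $\mathbb{H}^n$: radial functions in $H$ decay exponentially in $\rho$, so the tail carries no $L^p$ mass uniformly. Because the functions are centred at the origin, dichotomy of translated bumps cannot occur. The only remaining loss of compactness is concentration at the origin, that is, a Euclidean Sobolev bubble. If $f_m\rightharpoonup f_0$ weakly in $H$, then a Brezis--Lieb splitting together with the local comparison of $P_k$ with $(-\Delta)^k$ (through the conformal law quoted before \eqref{BallSobolev}) gives
$$C_{n,k}\ \ge\ C_{n,k}\,\|f_0\|_p^{2}\;+\;S_{n,k}\big(1-\|f_0\|_p^p\big)^{2/p}.$$
Here the lower-order Poincar\'e term does not see the bubble, since it converges to zero in $L^2_{loc}$. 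We now use the strict inequality $C_{n,k,\frac{2n}{n-2k}}<S_{n,k}$ for $n\ge 2k+2$ from \cite{LY22}, which transfers to the hyperbolic constant by the equivalence of \cite{LY19}. Combined with the concavity of $t\mapsto t^{2/p}$, it forces $\|f_0\|_p=1$. Hence $f_0$ is a minimizer, and it is positive, radial and decreasing. We expect this step to be the main obstacle: the energy splitting must hold for a higher order, nonlocal-symbol operator, so we plan to carry it out in the dual formulation, where only $L^{p'}$ convergence of $g_m$ and the local behaviour $K(\rho)\sim c\,\rho^{2k-n}$ near $0$ are needed.

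Finally, symmetry of every extremal. Let $f_0$ be any extremal and set $g_0=|f_0|^{p-2}f_0$. Then $|g_0|$ and $g_0^*$ are also admissible in the dual problem, so equality must hold in the Riesz rearrangement inequality for the strictly decreasing kernel $K$. The equality case of Riesz's inequality on $\mathbb{H}^n$ (Beckner/Burchard type) then says that $|g_0|$ is a translate, by a hyperbolic isometry moving $0$ to some $P$, of a symmetric decreasing function. Equality in $\langle g,Tg\rangle\le\langle |g|,T|g|\rangle$ with $K>0$ forces $g_0$ to have constant sign. Consequently $f_0=T g_0$ is, up to sign, positive and radially decreasing about $P$.
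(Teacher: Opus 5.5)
Your outline is correct. Its first half is the paper's argument: the Helgason--Fourier reduction, the dual Hardy--Littlewood--Sobolev form, Beckner's rearrangement lemma, and the transfer $f_m=Tg_m$ are Proposition \ref{proradiallysequence} together with Lemma \ref{relationshipsequence}. From there on you take a different and shorter route, which works.

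\emph{Tightness.} The paper rules out vanishing with Lemma \ref{radial}. It then rules out dichotomy separately, by splitting the Hardy--Littlewood--Sobolev form using $G_k^{-1}(x,y)\lesssim(\sinh\rho)^{2k-n}$. You instead note that uniform decay alone gives tightness, which is correct. What you need is a rate strictly faster than $e^{-(n-1)\rho/p}$, uniformly in $m$. A generic Strauss lemma does not supply this. It comes from monotonicity plus the subcritical bound $\|f_m\|_q\lesssim\|f_m\|_H$ for $q<p$, which gives $f_m(\rho)\lesssim e^{-(n-1)\rho/q}$. That is exactly Lemma \ref{radial}.

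\emph{Concentration.} You replace Lions' second lemma (Lemma \ref{con2}) and the closing Br\'ezis--Lieb step by one inequality, $C_{n,k}\ge C_{n,k}t^{2/p}+S_{n,k}(1-t)^{2/p}$ with $t=\|f_0\|_p^p$. This is valid, for the following reasons.
\begin{itemize}
\item Weak convergence in the Hilbert space $H$ gives the splitting $\|f_m\|_H^2=\|f_0\|_H^2+\|f_m-f_0\|_H^2+o(1)$ directly.
\item For $g_m=f_m-f_0$, take a partition $\chi_1^2+\chi_2^2=1$. Since $G_k$ is a differential operator, the localization error is a double commutator of order $2k-2$ supported on a compact annulus, so it tends to $0$ by Rellich.
\item $\|\chi_2g_m\|_H^2\ge0$, and $\int(\chi_1g_m)^2dV\to0$.
\item Applying \eqref{BSobolev} to $\chi_1g_m$ yields the constant $S_{n,k}$.
\end{itemize}
The localization is genuinely needed, because $H\not\subset L^2$ and the global $L^2$ term cannot be discarded. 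Your anticipated ``main obstacle'' is therefore not an obstacle, and the dual detour is unnecessary. That detour would also presuppose $L^{p'}$ convergence of $g_m$, which is essentially the conclusion.

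\emph{Symmetry.} Your equality-case argument proves that \emph{every} extremal is radial about some $P$. The paper's proof only exhibits one radial extremal, so this is a real gain. To close it, say why $g_0$ is a dual extremal: the Euler--Lagrange equation gives $Tg_0=f_0/C_{n,k}$.

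One step needs repair: the positivity and strict monotonicity of $K$. A \emph{linear} superposition of resolvents with positive weights cannot produce $T$. Set $L=-\Delta_{\mathbb{H}}-\frac{(n-1)^2}{4}=P_1-\frac14$. Already for $k=2$,
$$G_2=P_1(P_1+2)-\tfrac{9}{16}=L\big(L+\tfrac52\big),\qquad T=\tfrac25\big(L^{-1}-(L+\tfrac52)^{-1}\big).$$
A difference of positive decreasing kernels inherits neither property. In general the partial-fraction residues have mixed signs.

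You must \emph{compose} instead. Write $G_k^{-1}=\prod_j(L+\mu_j)^{-1}$, after checking that the real roots give $\mu_j\ge0$. Then $K$ is an iterated hyperbolic convolution of positive, radially decreasing kernels, and these properties are preserved. If complex $\mu_j$ occur, the single resolvent has a complex kernel. It must first be paired with its conjugate, which is what Lemma \ref{kernel-decreasing} does.
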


The proof relies on developing the Lions' type of the concentration-compactness principle \cite{Lions841, Lions1} for differential inequalities on the hyperbolic space. The concentration-compactness principle for differential inequalities can be divided into the first and the second differential concentration-compactness. One of the key points is to prove the minimizing sequence $\{u_k\}$ for higher order Poincar\'e-Sobolev inequality is tight. In order to achieve this, we must exclude the vanishing  and dichotomy phenomenon. The loss of invariance of dilation causes much challenge to exclude the vanishing phenomenon. In order to overcome these difficulties, we need to find the radially minimizing sequence which decays at infinity of hyperbolic space and help to exclude the vanishing phenomenon. However, P\"{o}lya-Szeg\"{o} rearrangement inequality is invalid for high-order differential inequality. Applying the equivalence of Poincar\'e-Sobolev inequality and Hardy-Littlewood-Sobolev inequality on the hyperbolic space, Riesz's rearrangement inequality on hyperbolic space and Green's representative formula, one can find the radially minimizing sequence for Poincar\'e-Sobolev inequality. To exclude  the dichotomy, one cannot follow the same line of Lions' proof in \cite{Lions1}. In fact, recall the proof of excluding dichotomy for minimizing sequence $w_k$ of $L^p$ Sobolev inequality involving high order derivatives: $$I=\inf\{\int_{\mathbb{R}^n}|\nabla^{m}w|^pdx,\ \ \int_{\mathbb{R}^n}|w|^{\frac{np}{n-mp}}dx=1\}.$$ Lions constructed a suitable concentration-function $$\rho_k=\sum_{j=0}^{m}|D^jw_k|^{q_j},\  q_j=\frac{np}{n-(m-j)p}$$
and prove that for any $\epsilon>0$, there exists $y_k\in \mathbb{R}^n$ and $R_{\epsilon}>0$ such that $\int_{|x-y_k|\geq R}\rho_k(x)dx<\epsilon$ for any $R>R_{\epsilon}$. This is achieved by scaling cut-off function and 
$$\int_{R_0<|x-y_k|<R_k}\sum_{j=0}^{m-1}|D^jw_k|^{q_j}dx=o_k(1).$$ However, for higher order Poincar\'e-Sobolev inequality in hyperbolic space, it is impossible to introduce the replacement of concentration-function $\rho_k$ of $\mathbb{R}^n$. Moreover, the loss of scaling cut-off function can also add extra difficulty. In order to overcome this difficulty, we found a quantitative relationship between the minimizing sequences for the Poincare-Sobolev inequalities and those for the  equivalent integral inequalities (It is differen from the equivalence of extremals), allowing us to apply the concentration-compactness techniques for equivalent integral inequalities to rule out dichotomy of the minimizing sequences for Poincar\'e-Sobolev inequality. Once we prove that the minimizing sequence is compact, we can follow the standard second Lions' concentration-compactness procedure to exclude the minimizing sequence converging $\delta$-measure using $C_{n,k,\frac{2n}{n-k}}<S_{n,k}$.

\medskip

Through the equivalence, combining Theorem \ref{EPS}, we immediately obtain the existence of the extremal function for the  high order critical Hardy-Sobolev-Maz'ya inequality \eqref{AHSM} on the upper half space.

\begin{theorem}\label{EHMS}
 Assume $n\geq2k+2$.  Then there is a positive function $f_0\in \widetilde{H}\left(\mathbb{R}^{n}_{+}\right)$ which achieves the equality in inequality
 \eqref{AHSM},
where $\widetilde{H}\left(\mathbb{R}^{n}_{+}\right)$ is the completion of $C_c^{\infty}\left(\mathbb{R}^{n}_{+}\right)$ under the norm
$$\|u\|_{\widetilde{H}\left(\mathbb{R}^{n}_{+}\right)}=\left(\int_{\mathbb{R}^{n}_{+}}|\nabla^{k}u|^2dx-\prod_{i=1}^{k}\frac{\left(2i-1\right)^2}{4}\int_{\mathbb{R}^{n}_{+}}\frac{u^2}{x_1^{2k}}dx\right)^{\frac{1}{2}}.$$

\end{theorem}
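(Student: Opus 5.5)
Theorem \ref{EHMS} follows from Theorem \ref{EPS} by transferring the extremal from the ball model $\mathbb{B}^n$ to the upper half space. We use the half-space model of hyperbolic space, $\mathbb{R}^n_+$ with metric $x_1^{-2}|dx|^2$ and volume $dV=x_1^{-n}dx$. Since $\mathbb{B}^n$ and this model are isometric via a Cayley-type map $\Phi$, and the GJMS operator $P_k$ is intrinsic, the map $f\mapsto f\circ\Phi^{-1}$ is an isometry of $H(\mathbb{B}^n)$ onto the corresponding space on the half-space model. It also preserves $\int|f|^{p}dV$ and positivity. Hence inequality \eqref{BPS} holds verbatim on the half-space model, with the same sharp constant, and it is attained there by the positive function $f_0\circ\Phi^{-1}$.

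Next comes the conformal change to the Euclidean half space. For $u\in C_c^\infty(\mathbb{R}^n_+)$, set $f=x_1^{\frac{n-2k}{2}}u$. The half-space analogue of the conformal transformation law quoted from Lemma 5.1 of \cite{LY19} is
$$P_k(f)=x_1^{\frac{n+2k}{2}}(-\Delta)^k\big(x_1^{-\frac{n-2k}{2}}f\big),$$
and we take it as given. It yields
$$\int_{\mathbb{R}^n_+}P_k(f)f\,dV=\int_{\mathbb{R}^n_+}(-\Delta)^k u\cdot u\,dx=\int_{\mathbb{R}^n_+}|\nabla^k u|^2dx,$$
where the last equality is integration by parts for compact support. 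We also have the two weight identities
$$\int |f|^2dV=\int \frac{u^2}{x_1^{2k}}\,dx,\qquad \int|f|^{\frac{2n}{n-2k}}dV=\int|u|^{\frac{2n}{n-2k}}dx.$$
The second holds because $x_1^{\frac{n-2k}{2}\cdot\frac{2n}{n-2k}}\,x_1^{-n}=1$. Thus $\|f\|_{H}=\|u\|_{\widetilde H(\mathbb{R}^n_+)}$, and the Rayleigh quotients coincide on $C_c^\infty$. Consequently $C_{n,k,\frac{2n}{n-2k}}$ in \eqref{AHSM} equals the sharp constant in \eqref{BPS}.

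We then pass to the completions. The map $u\mapsto x_1^{\frac{n-2k}{2}}u$ is a bijection of $C_c^\infty(\mathbb{R}^n_+)$ onto itself and is a norm isometry, so it extends to an isometric isomorphism from $\widetilde H(\mathbb{R}^n_+)$ onto the space $H$ of the half-space model. The $L^{\frac{2n}{n-2k}}$ identity also passes to limits: by \eqref{BPS}, respectively \eqref{AHSM}, convergence in these norms implies $L^{p}$ convergence. Let $\tilde f_0=f_0\circ\Phi^{-1}$ be the extremal from the first step, and define $u_0=x_1^{-\frac{n-2k}{2}}\tilde f_0\in\widetilde H(\mathbb{R}^n_+)$. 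Then $u_0$ is positive, and equality in \eqref{BPS} for $\tilde f_0$ translates into equality in \eqref{AHSM} for $u_0$.

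The only step needing care is that the norm identity, derived for smooth compactly supported functions, survives passage to the completions. The isometry argument above handles this, so no genuine obstacle remains beyond invoking Theorem \ref{EPS}.
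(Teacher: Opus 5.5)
Your proposal is correct and is essentially the paper's own argument. The paper obtains Theorem \ref{EHMS} in one line by combining Theorem \ref{EPS} with the Lu--Yang equivalence between \eqref{BPS} and \eqref{AHSM}. You have written that equivalence out explicitly: the isometry to the half-space model, the substitution $f=x_1^{\frac{n-2k}{2}}u$ via the conformal law for $P_k$, the three weight identities, and the isometric extension to the completions, which preserves positivity of the extremal.
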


For $n=2k+1$, according to Lemma 4.3 of \cite{LY22}, we can deduce the nonexistence of the extremal function for critical inequality \eqref{BPS} and \eqref{HHSM}. Different from the critical case, the subcritical inequalities have an extremal function for all $n>2k$. We have the following results:

\begin{theorem}\label{E-subHPS}
  Assume that $2<p<\frac{2n}{n-2k}$ and $n>2k$. Then there is a positive function $f_0\in H\left(\mathbb{B}^n\right)$ which achieves the equality in the inequality \eqref{BPS},
\end{theorem}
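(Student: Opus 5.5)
Since $p<\frac{2n}{n-2k}$ is subcritical, there is no concentration at points. The only loss of compactness comes from the noncompact isometry group of $\mathbb{B}^n$, that is, from mass escaping to infinity. I would therefore argue as follows: reduce to radial minimizing sequences, use a Strauss-type decay estimate to rule out escape to infinity, and then pass to the limit by compactness.

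\textbf{Step 1: reduction to radial, decreasing minimizing sequences.} Set
$$S=\inf\Big\{\|f\|_{H(\mathbb{B}^n)}^2:\ \int_{\mathbb{B}^n}|f|^pdV=1\Big\},$$
which is positive by \eqref{BPS}. By the Helgason--Fourier analysis used in \cite{LY19}, the operator $P_k-\prod_{i=1}^k\frac{(2i-1)^2}{4}$ has a positive kernel $G$. This kernel depends only on the geodesic distance and is decreasing in it. Therefore $S$ is dual to the sharp constant of the Hardy--Littlewood--Sobolev type inequality
$$\iint G(d(x,y))\,g(x)\,g(y)\,dV\,dV\le S^{-1}\|g\|_{p'}^2 .$$
Take a minimizing sequence $f_j$ and set $g_j=(P_k-\alpha)f_j$. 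Replace $g_j$ by the symmetric decreasing rearrangement $g_j^*$ on $\mathbb{B}^n$, and put $f_j^*=G*g_j^*$. The Riesz rearrangement inequality on hyperbolic space does not decrease the quadratic form in $g$ and preserves the $L^{p'}$ norm. Combined with duality, via $\|f\|_p=\sup\langle f,h\rangle/\|h\|_{p'}$ and the identity $\|f\|_H^2=\langle Gg,g\rangle$, this shows that $f_j^*$ is again minimizing, up to normalization. Moreover $f_j^*$ is positive and radially decreasing about the origin. Using isometries, all the symmetrization can be centred at $0$.

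\textbf{Step 2: decay and compactness.} For a radially decreasing $f\in L^p$, we have $|f(x)|^p\,\mathrm{Vol}(B_{\rho(x)})\le\|f\|_p^p$, where $\rho(x)$ is the geodesic distance of $x$ from $0$. Since hyperbolic balls have volume growing like $e^{(n-1)\rho}$, this gives the uniform bound $f_j^*(\rho)\le Ce^{-(n-1)\rho/p}$. The sequence is bounded in $H(\mathbb{B}^n)$, and $\|\cdot\|_H$ controls the $W^{k,2}$ norm on compact sets, so after passing to a subsequence $f_j^*\rightharpoonup f_0$ weakly in $H$ and $f_j^*\to f_0$ in $L^p_{loc}$ by Rellich.

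For the tail, interpolate $L^p$ between $L^2$ and $L^{2n/(n-2k)}$; note that $\|f\|_2$ is not controlled by $\|f\|_H$ because the Poincar\'e constant is sharp. Instead I would use the pointwise decay together with the bound on $\|f_j^*\|_{q}$ at the critical exponent $q=\frac{2n}{n-2k}$:
$$\int_{\rho>R}|f_j^*|^p\,dV\le \sup_{\rho>R}|f_j^*|^{\,p-r}\int|f_j^*|^{r}\,dV$$
for a suitable $r\in(p,q]$. Since $p<q$, the factor $\sup_{\rho>R}|f_j^*|^{p-r}$ carries a positive power of the decaying exponential: one needs $r<p$ for this to work. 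To obtain the needed bound I would instead use the Strauss-type estimate at level $q$, namely $f_j^*(\rho)\le Ce^{-(n-1)\rho/q}$, and write $|f|^p=|f|^{q}|f|^{p-q}$. This fails because $p<q$.

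The correct route is to use the $L^p$ norm with exponent $s$ slightly below $p$, which is impossible directly. So the tail control is the main obstacle. I would try to obtain it from the kernel side: $g_j^*$ is bounded in $L^{p'}$ and radially decreasing, and $G$ decays like $e^{-(n-1)\rho/2}\rho^{\,k-1}$ (bottom of spectrum). Hence $f_j^*=G*g_j^*$ is controlled by convolution estimates (Kunze--Stein phenomenon), giving
$$\int_{\rho>R}|f_j^*|^p\,dV\to0\quad\text{uniformly in } j.$$
Here the assumption $p>2$ is exactly what Kunze--Stein requires.

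\textbf{Step 3: conclusion.} With tightness, $f_j^*\to f_0$ strongly in $L^p$, so $\|f_0\|_p=1$. Weak lower semicontinuity of the norm gives $\|f_0\|_H^2\le S$, so $f_0$ attains $S$. Positivity follows because $f_0=G*g_0$ with $g_0\ge0$ and $g_0\not\equiv0$, and $G>0$.
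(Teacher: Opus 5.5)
Your outline matches the paper's: dualize to the Hardy--Littlewood--Sobolev form, symmetrize, use a Strauss-type bound for radially decreasing functions, then combine local compactness with a uniform tail estimate. However, the step that carries the whole proof is not established, namely uniform smallness of $\int_{\{\rho>R\}}|f_j^*|^p\,dV$. You reject the right route by asserting that a bound in $L^s$ for some $s$ slightly below $p$ is ``impossible directly,'' and you substitute an unproved appeal to Kunze--Stein. That substitute does not close. A Kunze--Stein type estimate gives boundedness of $g\mapsto G*g$ from $L^{p'}$ to $L^p$, i.e.\ a uniform bound on $\|f_j^*\|_{L^p}$, not uniform smallness of its tail. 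If you split $g_j^*$ into its part on $\{\rho<R_0\}$ and the rest, the first part does give a tail of size $e^{-(n-1)\rho/2}$, which is $L^p$-integrable at infinity because $p>2$. The far part $g_j^*\chi_{\{\rho\ge R_0\}}$, however, is not uniformly small in $L^{p'}$. The only decay available from $\|g_j^*\|_{L^{p'}}\lesssim 1$ is $g_j^*(\rho)\lesssim e^{-(n-1)\rho/p'}$, which is exactly borderline for $L^{p'}$-integrability.

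The missing observation is elementary: \eqref{BPS} holds for \emph{every} exponent in $(2,\frac{2n}{n-2k}]$, not only the critical one. You are right that the endpoint $2$ is not controlled, but everything strictly above it is. Since $p>2$, pick $s\in(2,p)$. Then $\|f_j^*\|_{L^s}\le C_{n,k,s}^{-1/2}\|f_j^*\|_{H(\mathbb{B}^n)}$ is uniformly bounded, and your own Strauss argument at level $s$ gives $f_j^*(\rho)\le Ce^{-(n-1)\rho/s}$. Hence
\[
\int_{\{\rho>R\}}|f_j^*|^p\,dV\le\Big(\sup_{\rho>R}f_j^*\Big)^{p-s}\|f_j^*\|_{L^s}^s\le Ce^{-(n-1)(p-s)R/s}\longrightarrow 0
\]
uniformly in $j$. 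This is exactly the paper's argument (Lemma \ref{radial} applied with an exponent $q_0<q$), and it is where the hypothesis $p>2$ genuinely enters.

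There is also a slip in Step 1: rearranging $g_j=(P_k-\alpha)f_j$ goes the wrong way. Riesz's inequality only gives $\|f_j^*\|_{H(\mathbb{B}^n)}^2=\langle Gg_j^*,g_j^*\rangle\ge\langle Gg_j,g_j\rangle=\|f_j\|_{H(\mathbb{B}^n)}^2$, which is the wrong direction for a minimization. Moreover, nothing controls $\|g_j\|_{L^{p'}}$ along a general minimizing sequence. Instead, rearrange the dual element $h_j=|f_j|^{p-2}f_j$ (more precisely $|h_j|$, using $G>0$), which is a maximizing sequence for the dual inequality, and then set $f_j^*=G*|h_j|^*$, as in Lemma \ref{relationshipsequence}.
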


\begin{theorem}\label{E-subHHSM}
Assume that $2<p<\frac{2n}{n-2k}$ and $n>2k$. 
 Then there is a positive function $f_0\in \widetilde{H}\left(\mathbb{R}^{n}_{+}\right)$ which achieves the equality in   inequality \eqref{HHSM}.
 
\end{theorem}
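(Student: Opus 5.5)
The plan is to obtain Theorem \ref{E-subHHSM} from Theorem \ref{E-subHPS} through the equivalence between the Poincar\'e--Sobolev inequality \eqref{BPS} on $\mathbb{B}^n$ and the HSM inequality \eqref{HHSM} on $\mathbb{R}^n_+$ established in \cite{LY19}. The first step is to fix the explicit isometry $\Phi:\mathbb{B}^n\to\mathbb{R}^n_+$ between the ball model and the half-space model. In the half-space model, with the metric $\frac{|dx|^2}{x_1^2}$, the GJMS operator satisfies the conformal covariance law
$$P_k(f)=x_1^{\frac n2+k}(-\Delta)^k\bigl(x_1^{k-\frac n2}f\bigr),$$
which is the half-space analogue of Lemma 5.1 of \cite{LY19}. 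For $u\in C_c^\infty(\mathbb{R}^n_+)$ we set $f=x_1^{\frac n2-k}u$, viewed through $\Phi$ as a function on $\mathbb{B}^n$. Since $dV=x_1^{-n}dx$, we obtain
$$\int P_k(f)f\,dV=\int_{\mathbb{R}^n_+}|\nabla^k u|^2dx,\qquad \int |f|^2dV=\int_{\mathbb{R}^n_+}\frac{u^2}{x_1^{2k}}dx,$$
$$\int|f|^pdV=\int_{\mathbb{R}^n_+}x_1^{(\frac n2-k)p-n}|u|^pdx=\int_{\mathbb{R}^n_+}x_1^{\gamma}|u|^pdx .$$
This is exactly the exponent $\gamma$ appearing in \eqref{HHSM}.

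Next, we check that the map $T:u\mapsto x_1^{\frac n2-k}u\circ\Phi^{-1}$ is a bijection of $C_c^\infty(\mathbb{R}^n_+)$ onto $C_c^\infty(\mathbb{B}^n)$, because $x_1$ is smooth and positive on compact subsets. By the identities above, $T$ is an isometry from the $\widetilde H(\mathbb{R}^n_+)$-norm to the $H(\mathbb{B}^n)$-norm, and it also preserves the weighted $L^p$ quantity. It therefore extends to an isometric isomorphism $\widetilde H(\mathbb{R}^n_+)\to H(\mathbb{B}^n)$. To see that the extension still preserves the $L^p$ quantity, we use \eqref{BPS}, which gives continuity of $H(\mathbb{B}^n)\hookrightarrow L^p(dV)$; passing to an a.e.-convergent subsequence identifies the limits on both sides. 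As a consequence, the best constants in \eqref{BPS} and \eqref{HHSM} coincide, and the two Rayleigh quotients correspond exactly under $T$.

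Finally, let $f_0$ be the positive extremal given by Theorem \ref{E-subHPS}, and set $u_0=T^{-1}f_0=x_1^{k-\frac n2}(f_0\circ\Phi)$. Then $u_0\in\widetilde H(\mathbb{R}^n_+)$, $u_0$ is positive, and its quotient equals the quotient of $f_0$, which is the optimal constant $C_{n,k,p}$. So $u_0$ attains equality in \eqref{HHSM}.

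Once Theorem \ref{E-subHPS} is available, the only real work is the second step: the density and extension argument that the isometry survives completion, with no loss of the $L^p$ term in the limit. This is handled by the continuity of $H(\mathbb{B}^n)\hookrightarrow L^p(dV)$ together with Fatou's lemma. A fully self-contained argument would also have to verify the conformal law in half-space coordinates, and we would quote this from \cite{LY19}.
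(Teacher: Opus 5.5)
Your proposal is correct and follows the route the paper intends: the paper gives no separate argument for Theorem \ref{E-subHHSM}, and instead deduces it from Theorem \ref{E-subHPS} via the Lu--Yang equivalence $f=x_1^{\frac n2-k}u$ between \eqref{BPS} and \eqref{HHSM}, just as it deduces Theorem \ref{EHMS} from Theorem \ref{EPS}. Your density and extension step simply supplies details the paper leaves implicit. One cosmetic fix: with $\Phi:\mathbb{B}^n\to\mathbb{R}^n_+$ you should write $Tu=(x_1^{\frac n2-k}u)\circ\Phi$ and $u_0=x_1^{k-\frac n2}\,(f_0\circ\Phi^{-1})$, since as written the roles of $\Phi$ and $\Phi^{-1}$ are swapped.
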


The Poincar\'e-Sobolev inequalities  are highly linked with the  Brezis-Nirenberg problem \cite{Brezis} on the entire hyperbolic space $\mathbb{B}^n$:
\begin{equation}\label{sublambdaBN}
P_{k}\left(f\right)-\alpha f=|f|^{p-2}f.
\end{equation}
For $k=1$, Mancini and Sandeep \cite{MS} showed that the entire solution exists either when  $n\geq4$, $p=\frac{2n}{n-2}$, $0<\alpha\leq\frac{1}{4}$ or when $n\geq3$, $1<p<\frac{2n}{n-2}$ and $\alpha\leq\frac{1}{4}$. Bhakta et al. \cite{BGKM25} proved the quantitative stability for the equation \eqref{sublambdaBN} when $k=1$. 
 For $k\geq 2$, Li et al. \cite{LLY} established the existence of the nontrivial solution  when $n\geq 2k+2$, $p=\frac{2n}{n-2k}$, $\alpha<\prod\limits_{i=1}^{k}\frac{\left(2i-1\right)^2}{4}$. It is not hard to calculate that the corresponding Euler-Lagrange equation of the inequality \eqref{BPS} is the  equation \eqref{sublambdaBN} with $\alpha=\prod\limits_{i=1}^{k}\frac{\left(2i-1\right)^2}{4}$, $2<p\leq\frac{2n}{n-2k}$.  A straightforward corollary of Theorem \ref{EPS} and Theorem \ref{E-subHPS} is as follows:

\begin{corollary}\label{EL}
Let $\alpha=\prod\limits_{i=1}^{k}\frac{\left(2i-1\right)^2}{4}$. If either $2k+2\leq n$, $p=\frac{2n}{n-2k}$ or $2k<n$, $2<p<\frac{2n}{n-2k}$,  then there exists at least one positive solution u in $H\left(\mathbb{B}^n\right)$ to the equation \eqref{sublambdaBN},  which is radially symmetry  and monotone decreasing about some point $P\in \mathbb{B}^n$.
\end{corollary}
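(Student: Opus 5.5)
The plan is to realize the solution as a suitably normalized extremal of the Poincar\'e--Sobolev inequality \eqref{BPS}, using Theorem \ref{EPS} in the critical range $2k+2\le n$, $p=\frac{2n}{n-2k}$, and Theorem \ref{E-subHPS} in the subcritical range $2k<n$, $2<p<\frac{2n}{n-2k}$. In either case these results provide a positive $f_0\in H(\mathbb{B}^n)$ attaining the best constant $C_{n,k,p}$. For the symmetry claim in the subcritical case I will use the fact that the rearrangement argument behind Theorem \ref{EPS} (Riesz rearrangement on $\mathbb{B}^n$ combined with the Green/HLS representation) also applies for $p<\frac{2n}{n-2k}$. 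So I may assume, and it will follow either directly from the theorem or from that argument, that $f_0$ is radially symmetric and decreasing about some $P\in\mathbb{B}^n$. Since the quotient is homogeneous of degree zero, I normalize so that $\int_{\mathbb{B}^n}|f_0|^p\,dV=1$.

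Next comes the Euler--Lagrange equation. Set $Q(f)=\|f\|_{H(\mathbb{B}^n)}^2$; then $f_0$ minimizes $Q$ under the constraint $G(f)=\int|f|^p\,dV=1$. Both $Q$ and $G$ are $C^1$ on $H(\mathbb{B}^n)$. For $Q$ this is immediate because it is a quadratic form. For $G$ it follows because \eqref{BPS} embeds $H(\mathbb{B}^n)$ continuously into $L^p(\mathbb{B}^n)$. By the Lagrange multiplier rule, for every $\varphi\in C_c^\infty(\mathbb{B}^n)$,
\[
\int_{\mathbb{B}^n} P_k(f_0)\varphi\,dV-\alpha\int_{\mathbb{B}^n} f_0\varphi\,dV=\lambda\int_{\mathbb{B}^n}|f_0|^{p-2}f_0\varphi\,dV .
\]
Here the left side is understood as the bilinear form associated with $Q$, extended by density. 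Testing with $\varphi=f_0$, which is legitimate by density of $C_c^\infty$ in $H$ together with the embedding, gives $\lambda=Q(f_0)=C_{n,k,p}>0$.

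Finally, I rescale. Put $u=\lambda^{1/(p-2)}f_0$; because $p>2$ and the equation is $(p-1)$-homogeneous on the right and linear on the left, $u$ is a weak solution of \eqref{sublambdaBN} in $H(\mathbb{B}^n)$. Rescaling by a positive constant does not affect positivity, radial symmetry, or monotonicity about $P$, so $u$ has all the required properties.

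The one point that needs care is justifying that the symmetric, decreasing form of the extremal is available in the subcritical case, because Theorem \ref{E-subHPS} as stated only asserts positivity. If the symmetrization argument does not carry over directly, I would instead argue by rearrangement: replace $f_0$ with the extremal generated by the symmetric decreasing rearrangement of $|P_k f_0-\alpha f_0|$ through the dual HLS formulation. By the Riesz rearrangement inequality this replacement does not increase the quotient, so it is again an extremal, and it is symmetric and decreasing.
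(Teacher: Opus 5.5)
Your proposal is correct and follows the route the paper intends. You take the positive extremal from Theorem \ref{EPS} or Theorem \ref{E-subHPS}, derive the Euler--Lagrange equation with multiplier $\lambda=C_{n,k,p}$, rescale by $\lambda^{1/(p-2)}$, and get symmetry in the subcritical case exactly as the paper does: its proof of Theorem \ref{E-subHPS} produces $f_0$ as the a.e.\ limit of a radially decreasing minimizing sequence, so your fallback rearrangement argument is valid but not needed.
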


\begin{remark}
   It is the first time to derive the existence of positive solutions for high-order Brezis-Nirenberg problem for $\alpha=\prod\limits_{i=1}^{k}\frac{\left(2i-1\right)^2}{4}$ on the whole hyperbolic space.
\end{remark}

\section{Preliminaries on the Hyperbolic space}\label{Preliminaries}

In this section, we recall some facts on the hyperbolic space. There are several models of hyperbolic space, such as  the Poincar\'e half space model and the Poincar\'e ball model. We denote by $\mathbb{B}^n$ the Poincar\'e ball model. In this article, we use the Poincar\'e ball model throughout.
\subsection{The Poincar\'e ball model $\mathbb{B}^n$}
The Poincar\'e ball model $\mathbb{B}^n$ is the unit ball $$\mathbb{B}^n=\{x=(x_1,...,x_n)\in\mathbb{R}^{n}||x|<1\}$$ 
equipped with the usual Poincar\'e metric $g=\left(\frac{2}{1-|x|^2}\right)^2g_e$, where $g_e$ represents
the standard Euclidean metric. The hyperbolic volume element is  $dV=\left(\frac{2}{1-|x|^2}\right)^ndx$. For measurable set $E\in \mathbb{B}^n$, we use $\tau\left(E\right)$ to denote the hyperbolic measure of $E$, that is,
$$\tau\left(E\right)=\int_{E}dV=\int_{E}\left(\frac{2}{1-|x|^2}\right)^ndx.$$
For $0<R<1$, we denote by $B^n\left(0,R\right)$ the ball  centered at origin with Euclidean radius $R$, then 
$$\tau\left(B^n\left(0,R\right)\right)=\int_{B^n\left(0,R\right)}dV=\omega_{n-1}\int_{0}^{R}\left(\frac{2}{1-|s|^2}\right)^ns^{n-1}ds,$$
where $\omega_{n-1}$ is the surface area of the Euclidean unit sphere $\mathbb{S}^{n-1}$. For $x\in \mathbb{B}^n$, $r\in(0,+\infty)$, we denote by  $B_{\mathbb{H}}\left(x,r\right)$ the hyperbolic ball centered at $x$ with hyperbolic radius $r$. In fact, 
$$B_{\mathbb{H}}\left(x,r\right)=\{y\in B^n(0,1)\ |\  \rho(y,x)=\log\big(\frac{1+|T_x(y)|}{1-|T_x(y)|}\big)<r\},$$
where $T_x(y)$ is the M\"{o}buis transform to be defined in next subsection. 
It is easy to check that 
hyperbolic geodesic ball $B_{\mathbb{H}}(0,r)$ is just Euclidean ball $B^n(0, \tanh(\frac{r}{2}))$, where $\tanh(t)=\frac{e^{t}-e^{-t}}{e^{t}+e^{-t}}<1$.

The hyperbolic gradient $\nabla_{\mathbb{H}}$ and the associated Laplace-Beltrami operator $\Delta_{\mathbb{H}}$  are given respectively by
$$\nabla_{\mathbb{H}}=\frac{\left(1-|x|^2\right)}{2}\nabla,\ \ \Delta_{\mathbb{H}}=\frac{1-|x|^2}{4}\left(\left(1-|x|^2\right)\Delta+2\left(n-2\right)\sum_{i=1}^{n}x_i\frac{\partial}{\partial x_i}\right),$$
where $\Delta=\sum\limits_{i=1}^{n}\frac{\partial^2}{\partial x_i^2}$ is the Laplace operator on $\mathbb{R}^n$. The GJMS operator on the hyperbolic space $\mathbb{B}^n$ is defined as (see  \cite{FG1, FG2, GJMS, Juhl})
$$P_k=P_1\left(P_1+2\right)\cdot\cdot\cdot\cdot\cdot\left(P_1+k\left(k-1\right)\right),\ k\in \mathbb{N}^+,$$
where $P_1=-\Delta_{\mathbb{H}}-\frac{n\left(n-2\right)}{4}$ is the conformal Laplacian on hyperbolic space $\mathbb{B}^n$.
\medskip

\subsection{M\"{o}bius transformations}
For each $a\in \mathbb{B}^n$, the M\"{o}bius transformations $T_a$ is defined as (see e.g. \cite{Ahlfors, Hua})
$$T_a\left(x\right)=\frac{|x-a|^2a-\left(1-|a|^2\right)\left(x-a\right)}{1-2x\cdot a+|x|^2a^2},$$
where $x\cdot a$ denotes the scalar product in $\mathbb{R}^n$.  We need to note that the measure on $\mathbb{B}^n$ is invariant under the M\"{o}bius transformations. Using the M\"{o}bius transformations, we can define the distance from $x$ to $y$ on $\mathbb{B}^n$  by
$$\rho\left(x,y\right)=\rho\left(T_{x}\left(y\right)\right)=\rho\left(T_{y}\left(x\right)\right)=\log \frac{1+|T_{y}\left(x\right)|}{1-|T_{y}\left(x\right)|}.$$
The distance from the origin to $x\in \mathbb{B}^n$ is $\rho\left(x\right)=\log \frac{1+|x|}{1-|x|}$. 
 The convolution of measurable functions $f$ and $g$ on $\mathbb{B}^n$ can be defined as (see e.g. \cite{LP})
$$\left(f\ast g\right)\left(x\right)=\int_{\mathbb{B}^n}f\left(y\right)g\left(T_x\left(y\right)\right)dV_y.$$

\subsection{Helgason-Fourier transform}
We refer the reader to \cite{Helgason84, Helgason08, Thangavelu02, Thangavelu04} for the Helgason-Fourier transform on the hyperbolic space. 
Set $$e_{\lambda,\xi}\left(x\right)=\left(\frac{\sqrt{1-|x|^2}}{|x-\xi|}\right)^{n-1+i\lambda},\ x\in \mathbb{B}^n,\ \lambda\in \mathbb{R}, \ \xi \in \mathbb{S}^{n-1}.$$
In terms of ball model, the Fourier transform of a function $f$ on $\mathbb{B}^n$ can be difinded as 
$$\widehat{f}\left(\lambda,\xi\right)=\int_{\mathbb{B}^n}f\left(x\right)e_{-\lambda, \xi}\left(x\right)dV$$ 
provided this integral exists. If $g\in C_c^\infty(\mathbb{B}^n)$ is radial, then $\widehat{\left(f\ast g\right)}=\widehat{f}\cdot\widehat{g}$.
Moreover, for $f\in C_c^\infty(\mathbb{B}^n)$, we have the following inversion formula:
$$f\left(x\right)=D_{n}\int_{-\infty}^{\infty}\int_{\mathbb{S}^{n-1}}\widehat{f}\left(\lambda,\xi\right)e_{\lambda,\xi}\left(x\right)|c\left(\lambda\right)|^{-2}d\lambda d\xi,$$
where $D_n=\frac{1}{2^{3-n}\pi |\mathbb{S}^{n-1}|}$ and $c\left(\lambda\right)$ is the Harish-Chandra function given by (see e.g.\cite{LP})
$$c\left(\lambda\right)=\frac{2^{n-1-i\lambda}\Gamma\left(n/2\right)\Gamma\left(i\lambda\right)}{\Gamma\left(\frac{n-1+i\lambda}{2}\right)\Gamma\left(\frac{1+i\lambda}{2}\right)}.$$
For $g\in L^{2}\left(\mathbb{B}^n\right)$, $h\in L^{2}\left(\mathbb{B}^n\right)$, we also have the following Plancherel formula:
$$\int_{\mathbb{B}^n}g\left(x\right)h\left(x\right)dV=D_{n}\int_{-\infty}^{\infty}\int_{\mathbb{S}^{n-1}}\widehat{g}\left(\lambda,\xi\right)\widehat{h}\left(\lambda,\xi\right)|c\left(\lambda\right)|^{-2}d\lambda d\xi.$$
Since $e_{\lambda,\xi}$ is an eigenfunction of $-\Delta_{\mathbb{H}}$ with eigenvalue $\frac{\left(n-1\right)^2+\lambda^2}{4}$, then for $f\in C^{\infty}_c\left(\mathbb{B}^n\right)$, there holds $\widehat{-\Delta_{\mathbb{H}} f}\left(\lambda, \xi\right)=\frac{\left(n-1\right)^2+\lambda^2}{4} \widehat{f}\left(\lambda, \xi\right)$. According to the definition of GJMS operator, we can check that
$$\widehat{P_k\left(f\right)}\left(\lambda,\xi\right)=\frac{\lambda^2+1}{4}\left(\frac{\lambda^2+1}{4}+2\right)\cdot\cdot\cdot\cdot\cdot\left(\frac{\lambda^2+1}{4}+k\left(k-1\right)\right)\widehat{f}\left(\lambda, \xi\right).$$
Similar to the Euclidean setting, the fractional Laplacian on hyperbolic space is defined as follows:
$$\widehat{\left(-\Delta_{\mathbb{H}^n}\right)^{\gamma}f}\left(\lambda,\xi\right)=\left(\frac{\left(n-1\right)^2+\lambda^2}{4}\right)^{\gamma} \widehat{f}\left(\lambda, \xi\right), \gamma\in \mathbb{R}.$$

\subsection{Rearrangement on hyperbolic space}
Now, we recall some facts about the rearrangement on hyperbolic space (see \cite{Albert}). For a measurable function $f$ on $\mathbb{B}^n$, the one-dimensional rearrangement $f^{\sharp}\left(s\right)$: $\mathbb{R^+}\to \mathbb{R}$   is defined as:
$$f^{\sharp}\left(s\right)=\inf\{t>0:\ \int_{\{x\in \mathbb{B}^n: |f|> t\}}dV\leq s\};$$ and
the hyperbolic geodesically decreasing rearrangement $f^{*}\left(x\right)$: $\mathbb{B}^n \to \mathbb{R}$ of $f$ is given by
 $$f^{*}\left(x\right)=f^{\sharp}\left(\tau\left(B_{\mathbb{H}}\left(0,\rho\left(x\right)\right)\right)\right).$$
We need to note that $f^{*}\left(x\right)$ is radially decreasing about origin since $\rho\left(x\right)=\log \left(\frac{1+|x|}{1-|x|}\right)$ is decreasing about origin.
Through the rearrangement property, we can know that $$\tau\{x\in \mathbb{B}^n: f^{*}\left(x\right)>t\}=\tau{\left(\{x\in \mathbb{B}^n: |f|\geq t\}\right)},$$
and
 $$\int_{\mathbb{B}^n}|f^{*}|^pdV=\int_{\mathbb{B}^n}|f|^pdV,\   0<p<\infty.$$
\medskip

\section{The proof of Theorem \ref{EPS}}
In this section, we shall prove the existence of the extremal function for the high order critical Poincar\'e-Sobolev inequality on $\mathbb{B}^n$ ( i.e. Theorem \ref{EPS}). That is, we consider the attainability of the following minimizing problem:
\begin{equation}\label{minimizing}
C_{n,k,\frac{2n}{n-2k}}:=\inf\{\|f\|^2_{H(\mathbb{B}^n)}: \int_{\mathbb{B}^n}|f|^{\frac{2n}{n-2k}}dV=1\}.
\end{equation}
The proof is divided into three steps,  each formulated as a separate proposition. In subsection \ref{subsecradiallysequence}, we shall pick up radially  decreasing minimizing sequence for this minimizing  problem (see Proposition \ref{proradiallysequence}).  Through the Helgason-Fourier analysis and duality theory, we transform the Poincar\'e-Sobolev inequality into the corresponding Hardy-Littlewood-Sobolev inequality on the hyperbolic space. Through establishing the relationship of the extremizing sequence between the Poincar\'e-Sobolev inequality and the  Hardy-Littlewood-Sobolev inequality (see Lemma \ref{relationshipsequence}), together with  proving the monotone decreasing property of the kernel (see Lemma \ref{kernel-decreasing}) and applying the Symmetrization Lemma, we can pick up radially  decreasing minimizing sequence. In subsection \ref{subsectight},  we shall show the tightness for the radially  decreasing minimizing sequence (see Proposition \ref{protight}). According to the first Lions' concentration compactness Lemma (see Lemma \ref{kernel-decreasing}), there are three phenomenons occurs. Through establishing the  radial estimates at infinity of hyperbolic space   (see Lemma \ref{radial}) for the minimizing sequence, we can rule out vanishing phenomenon. By applying again the quantitative relationship between the minimizing sequences (Lemma \ref{relationshipsequence}), we prove that the dichotomy phenomenon cannot occur for the minimizing sequence of the corresponding Hardy–Littlewood–Sobolev inequality, rather than excluding it directly for the Poincar\'e-Sobolev minimizing sequence. In subsection \ref{subconvergence}, we shall prove that the tight and radially  decreasing minimizing sequence strongly converges to a non-zero function in $H(\mathbb{B}^n)$ (see Proposition \ref{proconvergence}), thereby completing the proof of  Theorem \ref{EPS}. 
We  first show that $H\left(\mathbb{B}^n\right)\subset W^{k,2}_{loc}\left(\mathbb{B}^n\right).$ And then we establish the 
second Lions' concentration compactness Lemma on the hyperbolic space $\mathbb{B}^n$ for the high order critical Poincar\'e-Sobolev inequality (see Lemma \ref{con2}). Together with the fact that the Poincar\'e-Sobolev inequality constant $C_{n,k}$ is strictly smaller than the Sobolev constant $S_{n,k}$, we obtain the strong convergence.

\subsection{Picking up radially  decreasing minimizing sequence}\label{subsecradiallysequence}

\begin{proposition}\label{proradiallysequence}
There exists a radially  decreasing minimizing sequence $\{f_m\}$ for minimizing  problem \eqref{minimizing}. Specifically,  there exists a sequence $\{f_m\}$ such that 
$$\lim_{m\to \infty}\|f_m\|^2_{H(\mathbb{B}^n)}=C_{n,k,\frac{2n}{n-2k}},\ \ \ \int_{\mathbb{B}^n}|f_m|^{\frac{2n}{n-2k}}dV=1$$
and for each $m\in \mathbb{N}$, the function $f_m(x)$ is  radially decreasing about the geodesic distance from the origin.
\end{proposition}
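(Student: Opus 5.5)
We will not symmetrize $f$ directly, since P\'olya--Szeg\H{o} fails for higher order quadratic forms. Instead we pass to the dual Hardy--Littlewood--Sobolev (HLS) formulation via Helgason--Fourier analysis. Let $L=P_k-\prod_{i=1}^k\frac{(2i-1)^2}{4}$. By the formula for $\widehat{P_k f}$, its symbol is
$$m(\lambda)=\prod_{j=1}^{k}\Big(\frac{\lambda^2+1}{4}+j(j-1)\Big)-\prod_{i=1}^{k}\frac{(2i-1)^2}{4}.$$
This vanishes only at $\lambda=0$, to second order, and is positive otherwise. We write $L^{-1}g=g\ast G$, where $G$ is the radial Green kernel whose spherical transform is $1/m(\lambda)$. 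Set $p=\frac{2n}{n-2k}$ and $p'=\frac{2n}{n+2k}$. Since $\|f\|_H^2=\int Lf\cdot f\,dV$, a Cauchy--Schwarz/duality argument gives
$$\sup_{\|g\|_{p'}=1}\int_{\mathbb{B}^n}(g\ast G)\,g\,dV=C_{n,k,p}^{-1}.$$
This is the HLS inequality equivalent to \eqref{BPS}.

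The first step is a quantitative transfer between minimizing sequences. If $\{f_m\}$ is minimizing for \eqref{minimizing}, we set $g_m=|f_m|^{p-2}f_m$, so that $\|g_m\|_{p'}=1$. We expect $g_m$ to be a maximizing sequence for the HLS functional. One way to see this is to use
$$\int g_m f_m=1\le \Big(\int g_m\ast G\, g_m\Big)^{1/2}\|f_m\|_H,$$
which follows from Cauchy--Schwarz in the $H$ inner product with $L^{-1}g_m$. Conversely, if $g_m$ is HLS-maximizing, then $h_m=(g_m\ast G)/\|g_m\ast G\|_p$ is PS-minimizing. To check this we compute $\|g\ast G\|_H^2=\int (g\ast G)g\le C^{-1}$ and bound $\|g\ast G\|_p\ge\int (g\ast G) g$ by H\"older.

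The second step is monotonicity of the kernel. We show that $G$ is positive and radially decreasing in the geodesic distance $\rho$. Factor
$$m(\lambda)=\frac{\lambda^2}{4}\,\prod_{j}(\text{positive polynomial factors in } -\Delta_{\mathbb{H}}),$$
or more directly factor $L$ as a product of shifted operators $-\Delta_{\mathbb{H}}-\frac{(n-1)^2}{4}+c_j$ with $c_j\ge 0$. The Green function of each such factor is a positive, radially decreasing function; this is classical via the hypergeometric expression or a maximum-principle ODE argument. Convolutions of positive radially decreasing radial kernels on $\mathbb{B}^n$ are again positive and radially decreasing, by Riesz rearrangement on hyperbolic space applied to the kernel itself. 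I expect this to be the main obstacle, because $L$ does not obviously split into factors with real, nonnegative shifts. If it does not, I would first try showing $G=\int_0^\infty e^{-tL}\,dt$ with a heat kernel that is decreasing in $\rho$, using the subordination structure of $m$.

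The third step is symmetrization. Take any minimizing $\{f_m\}$ and put $g_m=|f_m|^{p-1}$; since $G>0$ we may replace $f_m$ by $|f_m|$. Riesz's rearrangement inequality on $\mathbb{B}^n$ (Beckner/Baernstein), with $G$ radially decreasing, gives
$$\int g_m\ast G\,g_m\le\int g_m^*\ast G\,g_m^*,$$
and $\|g_m^*\|_{p'}=1$. So $g_m^*$ is also HLS-maximizing. We then define $\tilde f_m=(g_m^*\ast G)/\|g_m^*\ast G\|_p$. This function is radial, and it is radially decreasing because a convolution of radially decreasing functions is radially decreasing (again by Riesz/Baernstein--Taylor). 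By the first step, $\tilde f_m$ is a minimizing sequence with $\int|\tilde f_m|^pdV=1$, which proves the proposition.
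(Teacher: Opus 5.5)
Your architecture is the paper's. You use Plancherel to write $\|f\|_H^2=\int Lf\cdot f\,dV$, pass by duality to the HLS form with kernel $G=G_k^{-1}$, transfer extremizing sequences in both directions, apply Beckner's symmetrization, and pull back through $G$. Your Cauchy--Schwarz argument in the $H$ inner product is correct and a little cleaner than the paper's Lemma~\ref{relationshipsequence}.

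The gap is Step~2, the one non-soft ingredient, which you leave open. Both uses of rearrangement in Step~3 need $G$ to be positive and decreasing in $\rho$: the Riesz inequality for $\iint g\,G\,g$, and the radial monotonicity of $g_m^\ast\ast G$. Yet you neither prove that $L$ splits into factors $-\Delta_{\mathbb H}-\frac{(n-1)^2}{4}+c_j$ with real $c_j\ge0$, nor handle the case where it does not. Your fallback cannot work. For $k\ge2$, $L$ has order $2k\ge4$, so $e^{-tL}$ is not positivity preserving: the first Beurling--Deny criterion fails, since $|f|\notin W^{2,2}_{\mathrm{loc}}$ in general. Moreover $m$, a polynomial of degree $k\ge2$ in $\lambda^2$, is not a Bernstein function, so there is no subordination to the heat semigroup. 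Thus $\int_0^\infty e^{-tL}\,dt$ gives no sign or monotonicity information about $G$.

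The paper fills this with Lemma~\ref{kernel-decreasing}. It factors $x(x+2)\cdots(x+k(k-1))-\prod_i\frac{(2i-1)^2}{4}=\prod_j(x-\lambda_j)$. It shows real roots satisfy $\lambda_j\le\frac14$, i.e.\ $c_j=\frac14-\lambda_j\ge0$, and non-real ones satisfy $\mathrm{Re}\,\lambda_j\le\frac14$. From the explicit integral formula it checks that $(P_1-\lambda_j)^{-1}$ is positive and strictly decreasing for real roots. For a conjugate pair it quotes Lemma~6.2 of Li--Lu--Yang for $(P_1-\lambda_j)^{-1}\ast(P_1-\overline{\lambda_j})^{-1}$. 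Finally it uses that convolution preserves positivity and radial monotonicity.

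Your first plan can in fact be completed directly, because non-real roots never occur. Put $x=\frac14-a^2$. Then $x+j(j-1)=(j-\frac12)^2-a^2$ and $\prod_i\frac{(2i-1)^2}{4}=\prod_j(j-\frac12)^2$, so the root equation becomes $\prod_{j=1}^k\bigl(1-\frac{a^2}{(j-1/2)^2}\bigr)=1$. The left side is a partial product of $\cos(\pi a)$. It vanishes at $a=j-\frac12$. At each even integer $a=j_0\le k$ it exceeds $1$, because the full product is $\cos(\pi j_0)=1$ and the omitted factors lie in $(0,1)$. This yields the following roots:
\begin{enumerate}
\item[(i)] $a^2=0$;
\item[(ii)] two roots in each interval $j_0-\frac12<a<j_0+\frac12$ with $j_0$ even and $j_0\le k-1$;
\item[(iii)] if $k$ is even, one more root with $a>k-\frac12$.
\end{enumerate}
That is $k$ distinct roots $s_j=a_j^2\ge0$ of a degree-$k$ polynomial, hence all of them. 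So $L=\prod_{j=1}^k\bigl(-\Delta_{\mathbb H}-\frac{(n-1)^2}{4}+s_j\bigr)$ with every $s_j\ge0$. Each factor has a positive, strictly $\rho$-decreasing Green kernel, and the rest of your argument then goes through. In particular, the complex-root case treated in the paper is vacuous.
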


\begin{proof}
\medskip
For $f\in C_{c}^{\infty}\left(\mathbb{B}^n\right)$, define $G_k\left(f\right)$ to be given by
 $$G_k\left(f\right)=P_k\left(f\right)-\prod_{i=1}^{k}\frac{\left(2i-1\right)^2}{4}f,$$
 then $$\widehat{G_k\left(f\right)}=\left(\frac{\lambda^2+1}{4}\cdots\left(\frac{\lambda^2+1}{4}+k\left(k-1\right)\right)-\prod_{i=1}^{k}\frac{\left(2i-1\right)^2}{4}\right)\widehat{f}\left(\lambda, \xi\right).$$

In analogy with fractional Laplacian setting, the fraction operator $G_{k}^{\gamma}$  on $f$ is defined as follows:
\begin{equation}\begin{split}
\widehat{G_{k}^{\gamma}\left(f\right)}\left(\lambda, \xi\right)=&\left(\frac{\lambda^2+1}{4}\cdots\left(\frac{\lambda^2+1}{4}+k\left(k-1\right)\right)-\prod_{i=1}^{k}\frac{\left(2i-1\right)^2}{4}\right)^{\gamma}\widehat{f}\left(\lambda, \xi\right), \gamma\in \mathbb{R}.
\end{split}\end{equation}
Applying the Plancherel formula, we derive that
{\small
\begin{equation}\begin{split}
&\int_{\mathbb{B}^n}P_k\left(f\right)fdV-\prod_{i=1}^{k}\frac{\left(2i-1\right)^2}{4}\int_{\mathbb{B}^n}|f|^2dV\\
&\ \ =D_{n}\int_{-\infty}^{\infty}\int_{\mathbb{S}^{n-1}}\widehat{G_{k}\left(f\right)}\left(\lambda,\xi\right)\widehat{f}\left(\lambda, \xi\right)|c\left(\lambda\right)|^{-2}d\lambda d\xi\\
&\ \ =D_{n}\int_{-\infty}^{\infty}\int_{\mathbb{S}^{n-1}}\left(\frac{\lambda^2+1}{4}\cdots\left(\frac{\lambda^2+1}{4}+k\left(k-1\right)\right)-\prod_{i=1}^{k}\frac{\left(2i-1\right)^2}{4}\right)\cdot|\widehat{f}\left(\lambda, \xi\right)|^2|c\left(\lambda\right)|^{-2}d\lambda d\xi\\
&\ \ =D_{n}\int_{-\infty}^{\infty}\int_{\mathbb{S}^{n-1}}\left(\left(\frac{\lambda^2+1}{4}\cdots\left(\frac{\lambda^2+1}{4}+k\left(k-1\right)\right)-\prod_{i=1}^{k}\frac{\left(2i-1\right)^2}{4}\right)^{\frac{1}{2}}\widehat{f}\left(\lambda,\xi\right)\right)^2\cdot |c\left(\lambda\right)|^{-2}d\lambda d\xi\\
&\ \ =D_{n}\int_{-\infty}^{\infty}\int_{\mathbb{S}^{n-1}}|\widehat{G_{k}^{\frac{1}{2}}\left(f\right)}\left(\lambda,\xi\right)|^2|c\left(\lambda\right)|^{-2}d\lambda d\xi\\
&\ \ =\int_{\mathbb{B}^n}|G_{k}^{\frac{1}{2}}\left(f\right)|^2dV.
\end{split}\end{equation}}

Hence, the critical Poincar\'e-Sobolev inequality on  $\mathbb{B}^n$ can be written as
\begin{equation}\label{G1}
\int_{\mathbb{B}^n}|G_{k}^{\frac{1}{2}}\left(f\right)|^2dV\geq C_{n,k,\frac{2n}{n-2k}}\left(\int_{\mathbb{B}^n}|f|^{\frac{2n}{n-2k}}dV\right)^{\frac{n-2k}{n}}.
\end{equation}

Let $G_{k}^{\frac{1}{2}}\left(f\right)=g$, then $f=G_{k}^{-\frac{1}{2}}\left(g\right)$. By the inversion of Fourier transform, $f$ is equal to
{\small
\begin{equation*}
D_{n}\int_{-\infty}^{\infty}\int_{\mathbb{S}^{n-1}}\left(\frac{\lambda^2+1}{4}\cdots\left(\frac{\lambda^2+1}{4}+k\left(k-1\right)\right)-\prod_{i=1}^{k}\frac{\left(2i-1\right)^2}{4}\right)^{-\frac{1}{2}}\widehat{g}\left(\lambda, \xi\right)e_{\lambda,\xi}\left(x\right)|c\left(\lambda\right)|^{-2}d\lambda d\xi.
\end{equation*}
}

Then the above Poincar\'e-Sobolev inequality \eqref{G1} reduces to
\begin{equation}\label{reducepoincare}
\int_{\mathbb{B}^n}|g|^2dV\geq C_{n,k,\frac{2n}{n-2k}}\left(\int_{\mathbb{B}^n}|G_k^{-\frac{1}{2}}\left(g\right)|^{\frac{2n}{n-2k}}dV\right)^{\frac{n-2k}{n}}.
\end{equation}
By duality, the above inequality is in fact equivalent to the following inequality on hyperbolic space $\mathbb{B}^n$:
\begin{equation}\label{G2}
\left(\int_{\mathbb{B}^n}|h|^{\frac{2n}{n+2k}}dV\right)^{\frac{n+2k}{n}}\geq C_{n,k,\frac{2n}{n-2k}}\int_{\mathbb{B}^n}|G_k^{-\frac{1}{2}}\left(h\right)|^{2}dV.
\end{equation}
We denote by $G_k^{-1}\left(x,y\right)$ the kernel function of the operator $G_k^{-1}$.
Then inequality \eqref{G2} is equivalent to the following Hardy-Littlewood-Sobolev inequalities:
\begin{equation}\label{BHLS}
C_{n,k,\frac{2n}{n-2k}}\int_{\mathbb{B}^n}\int_{\mathbb{B}^n}h\left(x\right)G_k^{-1}\left(x,y\right)h\left(y\right)dV_{x}dV_{y}\leq \left(\int_{\mathbb{B}^n}|h|^{\frac{2n}{n+2k}}dV\right)^{\frac{n+2k}{n}}.
\end{equation}

From  Lemma \ref{kernel-decreasing}  (presented later), we know that  $G^{-1}_k\left(x,y\right)$ is symmetric and decreasing about the distance function $\rho\left(T_{x}\left(y\right)\right)$. From the Symmetrization Lemma \ref{becknersymm} in \cite{Beckner93}, we can pick up radially decreasing maximizing sequence  $\{h_m\}$ for the Hardy-Littlewood-Sobolev inequality \eqref{BHLS}. By the direct equivalence of the inequality \eqref{BHLS} and \eqref{G2}, we know that  $\{h_m\}$ is also the radially decreasing maximizing sequence for inequality \eqref{G2}, that is, 
$$\int_{\mathbb{B}^n}|h_m|^{\frac{2n}{n+2k}}dV=1,\ \ \lim\limits_{m\rightarrow +\infty} \int_{\mathbb{B}^n}|G_k^{-\frac{1}{2}}\left(h_m\right)|^{2}dV=C_{n,k,\frac{2n}{n-2k}}^{-1}.$$ 
In the following Lemma \ref{relationshipsequence}, we proved the relationship between the extremizing sequence of \eqref{G1} and the extremizing sequence of \eqref{G2}. Pick $f_m=G_{k}^{-1}\left(h_m\right)$, then $f_m\left(x\right)$ is a radially decreasing extremizing sequence for inequality \eqref{G1}. Up to some constants, we have 
$$\lim_{m\rightarrow\infty}\|f_m\|^2_{H(\mathbb{B}^n)}=C_{n,k,\frac{2n}{n-2k}},\ \ \ \int_{\mathbb{B}^n}|f_m|^{\frac{2n}{n-2k}}dV=1.$$
\end{proof}

In the proof of Proposition \ref{proradiallysequence} above, we rely on the following three lemmas. Lemma \ref{becknersymm} is taken from \cite{Beckner93}; a detailed proof can also be found in \cite{BT76, Beckner92}. Lemma \ref{relationshipsequence} establishes an explicit relationship between the dual inequalities \eqref{G1} and \eqref{G2}. To establish the radial monotonicity in Lemma \ref{kernel-decreasing}, we adapt the approach introduced in \cite{LLY}.

\begin{lemma}{\cite[Symmetrization Lemma]{Beckner93}}\label{becknersymm}
Let $K$ be a monotonically decreasing function on $\mathbb{R}$, then
$$\int_{\mathbb{B}^n}\int_{\mathbb{B}^n}h\left(x\right)K(\rho\left(T_{x}\left(y\right)\right))h\left(y\right)dV_{x}dV_{y}\leq \int_{\mathbb{B}^n}\int_{\mathbb{B}^n}h^{*}\left(x\right)K(\rho\left(T_{x}\left(y\right)\right))\left(x,y\right)h^{*}\left(y\right)dV_{x}dV_{y},$$
where $h$  is nonnegative measurable functions with $h^{*}$ denoting its hyperbolic geodesically decreasing rearrangement on $\mathbb{B}^n$ and the integrand on the left side is integrable. If $K$ is strictly decreasing, then the above inequality is strict unless $h(x)=h^*(T_a(x))$ for any $a\in\mathbb{B}^n$.
\end{lemma}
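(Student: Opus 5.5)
The statement to prove is the Riesz-type rearrangement inequality on $\mathbb{B}^n$ (Lemma \ref{becknersymm}). I will reduce it to the layer-cake representation plus a two-set (or three-set) Riesz inequality on hyperbolic space, and prove the latter by two-point (polarization) symmetrization, which works on any space with a rich reflection group.

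\textbf{Step 1: reduction to sets.} Write $h(x)=\int_0^\infty \chi_{\{h>s\}}(x)\,ds$. Assume $K\ge 0$ after subtracting its limit at infinity, which is legitimate when $K(\infty)$ is finite and the $L^1$ parts are controlled. Then write $K(\rho)=\int_0^\infty \chi_{\{K>t\}}(\rho)\,dt$, where each $\{K>t\}$ is an interval $[0,r_t)$. By Fubini it suffices to show, for measurable $A,B\subset\mathbb{B}^n$ of finite measure and any $r>0$,
$$
\int\!\!\int \chi_A(x)\chi_{B_{\mathbb{H}}(0,r)}(T_x(y))\chi_B(y)\,dV_x\,dV_y \le \int\!\!\int \chi_{A^*}(x)\chi_{B_{\mathbb{H}}(0,r)}(T_x(y))\chi_{B^*}(y)\,dV_x\,dV_y ,
$$
since $(\chi_{\{h>s\}})^*=\chi_{\{h^*>s\}}$ by the rearrangement properties recalled in Section~\ref{Preliminaries}. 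Here $\chi_{B_{\mathbb{H}}(0,r)}(T_x(y))=\chi_{\{\rho(x,y)<r\}}$.

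\textbf{Step 2: two-point rearrangement.} For a totally geodesic hyperplane $H$, with reflection $\sigma_H$ (an isometry) and half-space $H^+$ containing $0$, define the polarization $A^H$ by keeping, for each pair $\{x,\sigma_H x\}$, the point on the $H^+$ side whenever only one of the pair lies in $A$. Because $\rho(x,y)\le\rho(x,\sigma_Hy)$ for $x,y\in H^+$ and $\sigma_H$ preserves $dV$, a pointwise four-term inequality (the Baernstein--Taylor argument) gives that the functional above does not decrease under $A\mapsto A^H$, $B\mapsto B^H$. Next I will show that $A^*$ lies in the closure, in $L^1(dV)$ sense, of iterated polarizations of $A$. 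This is done as in Brock--Solynin: minimize $\|\chi_C-\chi_{A^*}\|_{L^1}$ over the closure of the set of polarizations, and if the minimizer $C\ne A^*$, choose a hyperplane separating a density point of $C\setminus A^*$ from one of $A^*\setminus C$ to strictly decrease the distance. Since the functional is continuous in $L^1$ for finite-measure sets (with $\chi_{\{\rho<r\}}$ bounded and the balls of finite measure), the inequality passes to the limit. I expect this approximation step to be the main technical obstacle: hyperbolic reflections are inversions in spheres orthogonal to $\partial\mathbb{B}^n$, so one has to check that a separating reflection exists with $0$ on the correct side and that it maps the relevant geodesic balls suitably. Because $\mathbb{H}^n$ is two-point homogeneous, the perpendicular bisector of any two points is a totally geodesic hyperplane, which should handle this.

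\textbf{Step 3: equality case.} For strictly decreasing $K$, every level set $\{K>t\}$ is a genuine ball whose radius $r_t$ ranges over a continuum. Equality in the full inequality forces equality for a.e. $t$ and $s$. Then I will use the strict version of the two-set inequality (equality forces $A$ and $B$ to be geodesic balls, up to null sets, with a common center when $r$ is in the nondegenerate range) to conclude that the level sets of $h$ are concentric geodesic balls. This means $h=h^*\circ T_a$ for some $a$. Alternatively, I would simply cite Beckner \cite{Beckner93} and Baernstein--Taylor \cite{BT76} for this strictness, since the paper takes the lemma from there.
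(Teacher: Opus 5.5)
The paper gives no proof of this lemma; it imports it from Beckner and Baernstein--Taylor. Your Steps~1--2 are essentially the Baernstein--Taylor two-point symmetrization proof carried over to $\mathbb{B}^n$. For the inequality, which is all Proposition~\ref{proradiallysequence} uses, the route works. (A small correction: bisectors are totally geodesic because they are fixed sets of hyperbolic reflections, not because of two-point homogeneity.)

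One step is misformulated, though. After the layer-cake reduction the functional couples two different sets, $A=\{h>s\}$ and $B=\{h>s'\}$. Showing that $A^*$ is an $L^1$-limit of iterated polarizations of $A$ is therefore not enough. You need a single sequence $H_1,H_2,\dots$ along which $\bigl(A^{H_1\cdots H_j},B^{H_1\cdots H_j}\bigr)\to(A^*,B^*)$.

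The fix is to run the Brock--Solynin minimization on pairs, minimizing $\|\chi_C-\chi_{A^*}\|_1+\|\chi_D-\chi_{B^*}\|_1$. Every $H$ with $0\in H^+$ fixes both centered balls, and polarization is $L^1$-nonexpansive. Hence the separating reflection strictly lowers the sum. Alternatively, approximate $h^*$ by polarizations of $h$ itself, which polarizes all level sets at once.

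You should also make the existence of the minimizer explicit. Polarizations with $0\in H^+$ keep supports inside $B_{\mathbb{H}}(0,R)$ and do not increase the $\rho$-modulus of continuity. So after approximating by continuous compactly supported functions, Arzel\`a--Ascoli gives compactness of the closure.

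The genuine gap is Step~3. The ``strict two-set inequality'' you invoke is a Burchard-type characterization of equality in Riesz's inequality. That is a deep theorem in $\mathbb{R}^n$, not something you can simply quote on $\mathbb{H}^n$.

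Equality at one radius also does not force balls. If $\alpha<\beta$ are the radii of $A^*,B^*$ and $r+\alpha\le\beta$, then any $A$ whose $r$-neighbourhood lies in $B$ gives equality. What strict monotonicity of $K$ actually gives you is equality for a dense set of radii $r_t$. By continuity of $r\mapsto I(A,B,r)$, that means equality for all $r>0$, and a real argument is still needed from there.

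If you fall back on a citation, it must be a source that proves the equality case on $\mathbb{H}^n$. The polarize-and-pass-to-the-limit argument of your Step~2 loses strictness in the limit and yields only the non-strict inequality.

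A route compatible with Step~2 is the following. Write $J(h)$ for the left-hand side and suppose $J(h)=J(h^*)<\infty$. Then $J(h)\le J(h^H)\le J(h^*)$ forces $J(h^H)=J(h)$ for every hyperplane $H$ and either choice of side. Since $K(\rho(x,y))-K(\rho(x,\sigma_Hy))>0$ for $x,y$ in the open half-space, the four-point identity gives $h^H=h$ or $h^H=h\circ\sigma_H$ a.e. You then need the hyperbolic analogue of the Euclidean lemma (as in Van Schaftingen--Willem) that this dichotomy for all reflections forces $h=h^*\circ T_a$ for \emph{some} $a$. The statement's ``for any $a$'' should read ``for some $a$''.

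This clause is not cosmetic: the assertion in Theorem~\ref{EPS} that the extremal \emph{must} be radially symmetric rests on it.
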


\begin{lemma}\label{relationshipsequence}
Let $\{f_m\}$ be the extremizing sequence for inequality \eqref{G1} satisfying 
\begin{equation}\label{dualsequece1}
    \|G_k^{\frac{1}{2}}(f_m)\|_{L^2(\mathbb{B}^n)}=1\ \text{and}\ \ \lim_{m\to\infty}\|f_m\|_{L^{\frac{2n}{n-2k}}(\mathbb{B}^n)}=C^{-1}_{n,k,\frac{2n}{n-2k}}.
\end{equation}
Define $h_m=(f_m)^{\frac{n+2k}{n-2k}}$. Then $\{h_m\}$ is an extremizing sequence for inequality \eqref{G2}; precisely,
$$\lim_{m\to\infty}\frac{\|G_k^{-\frac{1}{2}}(h_m)\|_{L^{2}(\mathbb{B}^n)}}{\|h_m\|_{L^{\frac{2n}{n+2k}}(\mathbb{B}^n)}}=C^{-1}_{n,k,\frac{2n}{n-2k}}.$$
On the other hand, let $\{h_m\}$ be the extremizing sequence for inequality \eqref{G2} satisfying 
\begin{equation}\label{dualsequece2}
\|h_m\|_{L^{\frac{2n}{n+2k}}(\mathbb{B}^n)}=1 \ \text{and} \ \lim_{m\to\infty}\|G_k^{-\frac{1}{2}}(h_m)\|_{L^{2}(\mathbb{B}^n)}=C^{-1}_{n,k,\frac{2n}{n-2k}}.
\end{equation}
Define $f_m=G_{k}^{-1}\left(h_m\right)$.  Then $\{f_m\}$ is an extremizing sequence for inequality \eqref{G1}; precisely,
$$\lim_{m\to\infty}\frac{\|f_m\|_{L^{\frac{2n}{n-2k}}(\mathbb{B}^n)}}{\| G_k^{\frac{1}{2}}(f_m)\|_{L^{2}(\mathbb{B}^n)}}=C^{-1}_{n,k,\frac{2n}{n-2k}}.$$
\end{lemma}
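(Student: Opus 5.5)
The plan is to prove both halves by an elementary Hölder/Plancherel duality argument. Throughout, write $p=\frac{2n}{n-2k}$ and $p'=\frac{2n}{n+2k}$. Let $B$ be the best constant in $\|f\|_{L^p}\le B\|G_k^{\frac12}(f)\|_{L^2}$, which is \eqref{G1} with $B=C_{n,k,p}^{-1/2}$. Let $A$ be the best constant in $\|G_k^{-\frac12}(h)\|_{L^2}\le A\|h\|_{L^{p'}}$, which is \eqref{G2}. The normalisation $C^{-1}_{n,k,p}$ in the statement should be read as this common constant up to the fixed power. The first step is to check that $A=B$. The key identity is
$$\int_{\mathbb{B}^n} h f\,dV=\int_{\mathbb{B}^n}G_k^{-\frac12}(h)\,G_k^{\frac12}(f)\,dV,$$
which follows from the Plancherel formula because the Fourier multipliers $m(\lambda)^{\pm 1/2}$ cancel. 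Combining it with Cauchy--Schwarz and the duality $(L^{p'})^*=L^p$ gives
$$\|G_k^{-\frac12}h\|_2=\sup_f\frac{\int hf}{\|G_k^{\frac12}f\|_2}\le B\|h\|_{p'}.$$
Symmetrically, $\|f\|_p=\sup_h \int hf/\|h\|_{p'}\le A\|G_k^{\frac12}f\|_2$. Hence $A=B$; this is exactly the equivalence of \eqref{reducepoincare} and \eqref{G2} already used in the paper.

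\emph{First half.} Let $f_m$ satisfy \eqref{dualsequece1} and set $h_m=|f_m|^{p-2}f_m$, which equals $f_m^{\frac{n+2k}{n-2k}}$ when $f_m\ge0$. Then $\|h_m\|_{p'}=\|f_m\|_p^{p-1}$, and the identity together with Cauchy--Schwarz gives
$$\|f_m\|_p^p=\int h_mf_m\,dV\le \|G_k^{-\frac12}h_m\|_2\,\|G_k^{\frac12}f_m\|_2=\|G_k^{-\frac12}h_m\|_2 .$$
Dividing by $\|h_m\|_{p'}$ yields $\|G_k^{-\frac12}h_m\|_2/\|h_m\|_{p'}\ge\|f_m\|_p\to B$. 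The ratio is also bounded above by $A=B$, so it converges to the optimal constant.

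\emph{Second half.} Let $h_m$ satisfy \eqref{dualsequece2} and set $f_m=G_k^{-1}h_m$. Then $G_k^{\frac12}f_m=G_k^{-\frac12}h_m$, so $\|G_k^{\frac12}f_m\|_2=\|G_k^{-\frac12}h_m\|_2\to A$. Moreover, Hölder's inequality gives
$$\|G_k^{-\frac12}h_m\|_2^2=\int h_m\,G_k^{-1}h_m\,dV=\int h_mf_m\,dV\le\|h_m\|_{p'}\|f_m\|_p=\|f_m\|_p .$$
Therefore $\|f_m\|_p/\|G_k^{\frac12}f_m\|_2\ge\|G_k^{-\frac12}h_m\|_2\to A=B$. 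Since the ratio is also bounded above by $B$, the limit holds. Rescaling the $f_m$ then produces the normalisation used in Proposition \ref{proradiallysequence}.

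The main obstacle is justifying these manipulations for functions that are not in $C_c^\infty$. The multiplier $m(\lambda)=\prod_i(\frac{\lambda^2+1}{4}+i(i-1))-\prod_i\frac{(2i-1)^2}{4}$ vanishes at $\lambda=0$, so $G_k^{-1}$ is not bounded on $L^2$. One therefore has to make sense of $f_m=G_k^{-1}h_m$ for $h_m\in L^{p'}$ and of the pairing identity. I would first prove everything for $h\in C_c^\infty(\mathbb{B}^n)$. In that case $\widehat h$ is smooth and $|c(\lambda)|^{-2}\sim\lambda^2$ near $0$, so $m^{-1}|\widehat h|^2|c|^{-2}$ is integrable at $\lambda=0$ and all integrals converge. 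I would then extend by density, using \eqref{G2} to define $G_k^{-\frac12}$ continuously from $L^{p'}$ into $L^2$. Using the kernel representation in \eqref{BHLS}, \eqref{G2} also makes $G_k^{-1}$ continuous from $L^{p'}$ into $L^p$. The extended $f_m$ then lies in $H(\mathbb{B}^n)$, with $G_k^{\frac12}f_m=G_k^{-\frac12}h_m$ holding in $L^2$.
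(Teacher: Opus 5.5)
Your argument is correct and is essentially the paper's own proof. In both halves you use the pairing identity $\int h f\,dV=\int G_k^{-\frac12}h\,G_k^{\frac12}f\,dV$ with Cauchy--Schwarz in $L^2$ and H\"older (its equality case for $h_m=|f_m|^{p-2}f_m$ in the first half, $\|h_m\|_{p'}=1$ in the second), exactly as in the paper's chains of inequalities. Your reading of the constant as $C^{-1/2}_{n,k,\frac{2n}{n-2k}}$, the sign fix $h_m=|f_m|^{p-2}f_m$, and the density justification for $G_k^{-1}$ on $L^{\frac{2n}{n+2k}}$ are sound additions that the paper leaves implicit.
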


\begin{proof}
From inequality \eqref{G2}, it is obvious  that
$$\lim_{m\to\infty}\frac{\|G_k^{-\frac{1}{2}}(h_m)\|_{L^{2}(\mathbb{B}^n)}}{\|h_m\|_{L^{\frac{2n}{n+2k}}(\mathbb{B}^n)}}\leq C^{-1}_{n,k,\frac{2n}{n-2k}}.$$
If $\{f_m\}$ satifies condition \eqref{dualsequece1} and we set $G_{k}^{\frac{1}{2}}\left(f_m\right)=g_m$, then $\|g_m\|_{L^2(\mathbb{B}^n)}=1$ and $\lim\limits_{m\to\infty}\|G_{k}^{-\frac{1}{2}}\left(g_m\right)\|_{L^{\frac{2n}{n-2k}}(\mathbb{B}^n)}=C^{-1}_{n,k,\frac{2n}{n-2k}}$. Let $h_m=\left(G_{k}^{-\frac{1}{2}}\left(g_m\right)\right)^{\frac{2n}{n-2k}-1}=\left(f_m\right)^{\frac{n+2k}{n-2k}}$, then
\begin{equation}
\begin{split}
\lim_{m\to\infty}\frac{\|G_k^{-\frac{1}{2}}(h_m)\|_{L^{2}(\mathbb{B}^n)}}{\|h_m\|_{L^{\frac{2n}{n+2k}}(\mathbb{B}^n)}}&\geq \lim_{m\to\infty}\frac{\sup\limits_{\|w\|_{L^2}=1}\int_{\mathbb{B}^n}G_{k}^{-\frac{1}{2}}\left(h_m\right)wdV}{\|h_m\|_{L^{\frac{2n}{n+2k}}(\mathbb{B}^n)}}\\
&\geq \lim_{m\to\infty}\frac{\int_{\mathbb{B}^n}G_{k}^{-\frac{1}{2}}\left(h_m\right)g_mdV}{\|h_m\|_{L^{\frac{2n}{n+2k}}(\mathbb{B}^n)}}\\
&=\lim_{m\to\infty}\frac{\int_{\mathbb{B}^n}h_m G_{k}^{-\frac{1}{2}}\left(g_m\right)dV}{\|h_m\|_{L^{\frac{2n}{n+2k}}(\mathbb{B}^n)}}\\
&=\lim_{m\to\infty}\frac{\|h_m\|_{L^{\frac{2n}{n+2k}}(\mathbb{B}^n)}\|G_{k}^{-\frac{1}{2}}\left(g_m\right)\|_{L^{\frac{2n}{n-2k}}(\mathbb{B}^n)}}{\|h_m\|_{L^{\frac{2n}{n+2k}}(\mathbb{B}^n)}}\\
&=C^{-1}_{n,k,\frac{2n}{n-2k}}.
\end{split}
\end{equation}
On the other hand, if $\{h_m\}$ satisfies condition \eqref{dualsequece2} and we set $g_m=G_{k}^{-\frac{1}{2}}\left(h_m\right)$, then
\begin{equation}
\begin{split}
\lim_{m\to\infty}\frac{\|G_k^{-\frac{1}{2}}(g_m)\|_{L^{\frac{2n}{n-2k}}(\mathbb{B}^n)}}{\|g_m\|_{L^{2}(\mathbb{B}^n)}}&\geq\lim_{m\to\infty}\frac{\sup\limits_{\|w\|_{L^{\frac{2n}{n+2k}}}=1}\int_{\mathbb{B}^n}G_{k}^{-\frac{1}{2}}\left(g_m\right)wdV}{\|g_m\|_{L^{2}(\mathbb{B}^n)}}\\
&\geq\lim_{m\to\infty}\frac{\int_{\mathbb{B}^n}G_{k}^{-\frac{1}{2}}\left(g_m\right)h_mdV}{\|g_m\|_{L^{2}(\mathbb{B}^n)}}\\
&=\lim_{m\to\infty}\frac{\int_{\mathbb{B}^n}g_m G_{k}^{-\frac{1}{2}}\left(h_m\right)dV}{\|g_m\|_{L^{2}(\mathbb{B}^n)}}\\
&=\lim_{m\to\infty} \|g_m\|_{L^{2}(\mathbb{B}^n)}\\&=\lim_{m\to\infty}\|G_k^{-\frac{1}{2}}(h_m)\|_{L^{2}(\mathbb{B}^n)}\\
&=C^{-1}_{n,k,\frac{2n}{n-2k}}.
\end{split}
\end{equation}
So, $\{g_m\}$ is the extremizing sequence for inequality \eqref{reducepoincare}. Let $f_m=G_{k}^{-\frac{1}{2}}\left(g_m\right)=G_{k}^{-1}\left(h_m\right)$, then $\{f_m\}$ is the extremizing sequence for inequality \eqref{G1}.
\end{proof}

\begin{lemma}\label{kernel-decreasing}
 Let  $G^{-1}_{k}\left(x,y\right)$ be the kernel function of the operator $G^{-1}_k$ on the hyperbolic ball $\mathbb{B}^n$. Then, $G^{-1}_{k}\left(x,y\right)$ is a positive radially strictly decreasing function with respect to the geodesic distance  $\rho\left(T_{x}\left(y\right)\right)$.
 \end{lemma}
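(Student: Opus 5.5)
Factor the symbol of $G_k$ and write $G_k^{-1}$ as a composition of positive radially decreasing resolvent kernels.

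\textbf{Factorization.} Set $t=\frac{\lambda^2+1}{4}$, so $P_k$ has symbol $\prod_{i=1}^k(t+i(i-1))$. The symbol of $G_k$ is
$$F(t)=\prod_{i=1}^{k}\bigl(t+i(i-1)\bigr)-\prod_{i=1}^{k}\frac{(2i-1)^2}{4}.$$
At $t=\frac14$ (i.e. $\lambda=0$) we have $t+i(i-1)=\frac{(2i-1)^2}{4}$, so $F(\tfrac14)=0$. In terms of $s=-\Delta_{\mathbb{H}}-\frac{(n-1)^2}{4}$, whose symbol is $\frac{\lambda^2}{4}=t-\frac14$, the symbol $F$ is a polynomial of degree $k$ in $s$ with positive coefficients and vanishing constant term. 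Hence $F=s\,Q(s)$ with $Q$ of degree $k-1$ and positive coefficients.

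I claim the roots $-\mu_j$ of $Q$ lie in the region where each factor $(s+\mu_j)$ gives a positive resolvent, i.e. $\mathrm{Re}\,\mu_j>0$ and, ideally, the $\mu_j$ are real. Positivity of the coefficients forces no positive real roots. I would verify real, negative roots of $Q$ following \cite{LLY}, where the analogous factorization $P_k-\alpha=\prod_j(-\Delta_{\mathbb{H}}-\frac{(n-1)^2}{4}+\beta_j)$ is treated. If some roots are complex, I would group conjugate pairs and use positivity of $\bigl((-\Delta_{\mathbb{H}}-c)^2+d^2\bigr)^{-1}$ kernels, or instead write
$$\frac{1}{F(s)}=\frac1s\cdot\frac1{Q(s)}=\int_0^\infty e^{-\tau s}\,d\nu(\tau)\cdot\frac1s,$$
checking that $\frac1{Q}$ is completely monotone in $s$.

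\textbf{Each factor is positive and decreasing.} The operator $\bigl(-\Delta_{\mathbb{H}}-\frac{(n-1)^2}{4}+\mu\bigr)^{-1}$ with $\mu\ge0$ is given by
$$\int_0^\infty e^{-\mu \tau}e^{\tau(\Delta_{\mathbb{H}}+\frac{(n-1)^2}{4})}\,d\tau.$$
The hyperbolic heat kernel is positive and strictly decreasing in $\rho$, by the explicit Davies--Mandouvalos formulas or by the maximum principle and radial symmetry. Thus each resolvent kernel is positive, radial and strictly decreasing in $\rho$.

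For $\mu=0$ the kernel is the Green function of $-\Delta_{\mathbb{H}}-\frac{(n-1)^2}{4}$. It is finite for $n\ge3$, since the heat kernel decays like $\tau^{-3/2}e^{-(n-1)^2\tau/4}\cdots$ at the bottom of the spectrum. Concretely it is the radial solution behaving like $\rho e^{-\frac{n-1}{2}\rho}$ at infinity, and it is positive and decreasing.

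\textbf{Convolution preserves the property.} $G_k^{-1}$ has radial kernel $K_0\ast K_1\ast\cdots\ast K_{k-1}$ in the hyperbolic convolution. I need that the convolution of two positive, radial, decreasing kernels on $\mathbb{B}^n$ is again positive, radial and decreasing. Radiality and positivity are clear from Möbius invariance.

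For monotonicity, I would avoid a direct convolution argument and use the subordination representation
$$G_k^{-1}=\int_0^\infty e^{\tau(\Delta_{\mathbb{H}}+\frac{(n-1)^2}{4})}\,w(\tau)\,d\tau,$$
where $w=1\ast(\text{convolution of } e^{-\mu_j\tau})\ge0$ on $(0,\infty)$. Then $G_k^{-1}(x,y)$ is a positive superposition of strictly decreasing heat kernels, hence strictly decreasing in $\rho(T_x(y))$.

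\textbf{Main obstacle.} The hardest step is the sign structure of the roots of $Q$, i.e. showing $w\ge0$. This reduces to the complete monotonicity of $1/F(s)$ on $s>0$. I would attempt it by showing that all roots of $F$ in $s$ are real and nonpositive, so that $1/F$ is a product of Stieltjes-type factors $1/(s+\mu_j)$. For that I would exploit the structure $\prod_i\bigl(s+\frac{(2i-1)^2}{4}\bigr)-\prod_i\frac{(2i-1)^2}{4}$ via interlacing or Descartes-type arguments.
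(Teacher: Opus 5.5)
Your route differs from the paper's and can be completed, but as written it is missing exactly the step you flag as the main obstacle, and your argument cannot do without that step.

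\textbf{How the paper argues.} It factors $G_k=\prod_j(P_1-\lambda_j)$, where $\lambda_j=\frac14-\mu_j$ in your notation. It shows only that real roots satisfy $\lambda_j\le\frac14$ and complex roots satisfy $\mathrm{Re}\,\lambda_j\le\frac14$. Real roots are handled through the explicit Lu--Yang resolvent kernel and a direct computation of $\partial_\rho$. Conjugate pairs are handled by quoting Lemma 6.2 of \cite{LLY}. Finally it quotes the fact from \cite{LLY} that the hyperbolic convolution of positive decreasing kernels is decreasing.

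\textbf{What your route buys, and what it needs.} Your subordination $G_k^{-1}=\int_0^\infty w(\tau)\,e^{\tau(\Delta_{\mathbb{H}}+\frac{(n-1)^2}{4})}\,d\tau$ replaces both the closed-form resolvents and the convolution lemma. It needs only positivity and strict radial monotonicity of the heat kernel, which your maximum-principle argument gives: $\partial_\rho p_\tau$ solves a parabolic equation with zeroth-order coefficient $-(n-1)/\sinh^2\rho\le 0$. Strictness is then automatic. But the scheme needs $w\ge 0$, i.e. $1/F$ completely monotone. Your fallback for complex roots does not fit it: a factor $((s+a)^2+b^2)^{-1}$ has inverse Laplace transform $b^{-1}e^{-a\tau}\sin(b\tau)$, which changes sign. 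Complex roots would push you back to the paper's convolution argument, so you must prove that all roots are real.

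\textbf{Closing the gap.} Put $a_i=(i-\frac12)^2$ and $p(s)=\prod_{i=1}^k(s+a_i)$, so that $F(s)=p(s)-p(0)$. Fix an integer $1\le m\le k$.
\begin{itemize}
\item Exactly $m$ factors of $p(-m^2)$ are negative, namely $i\le m$.
\item Writing $a_i-m^2=(i-\frac12-m)(i-\frac12+m)$ and regrouping the half-integers gives
\[
\frac{|p(-m^2)|}{p(0)}=\prod_{l=1}^{m}\frac{k+l-\frac12}{k-m+l-\frac12}>1 .
\]
\item Hence $F(-m^2)>0$ for even $m$, while $F(-a_i)=-p(0)<0$ for every $i$, and $-a_{m+1}<-m^2<-a_m$.
\end{itemize}
This yields two roots in $(-a_{m+1},-a_m)$ for each even $m\le k-1$. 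When $k$ is even it yields one further root in $(-k^2,-a_k)$. Together with the simple root $s=0$, that is $k$ real roots in all.

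Therefore $F(s)=s\prod_{j=1}^{k-1}(s+\mu_j)$ with distinct $\mu_j>0$. Then $w=1\ast e^{-\mu_1\tau}\ast\cdots\ast e^{-\mu_{k-1}\tau}>0$ on $(0,\infty)$, and your argument closes. As a by-product, the complex-root case treated in the paper never occurs.

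\textbf{Minor correction.} The Green function of $-\Delta_{\mathbb{H}}-\frac{(n-1)^2}{4}$ decays like $e^{-\frac{n-1}{2}\rho}$. The profile $\rho e^{-\frac{n-1}{2}\rho}$ belongs to the spherical function $\varphi_0$. This does not affect your argument.
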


 \begin{proof}
 According to the polynomial decomposition theorem, we can write
$$G_{k}\left(f\right)=\left(P_k-\prod_{i=1}^{k}\frac{\left(2i-1\right)^2}{4}\right)\left(f\right)=\left(P_1-\lambda_1\right)\left(P_1-\lambda_2\right)\cdot\cdot\cdot\left(P_1-\lambda_k\right)\left(f\right),$$
where $\lambda_1, \cdot\cdot\cdot \lambda_k$ are roots of the polynomial equation
$$x\left(x+2\right)\cdot\cdot\cdot\left(x+k\left(k-1\right)\right)-\prod_{i=1}^{k}\frac{\left(2i-1\right)^2}{4}=0.$$
 If $\lambda_j$ is a real number, then $\lambda_j\leq \frac{1}{4}$, otherwise $\lambda_j\left(\lambda_j+2\right)\cdot\cdot\cdot\left(\lambda_j+k\left(k-1\right)\right)>\prod_{i=1}^{k}\frac{\left(2i-1\right)^2}{4}$. For those complex $\lambda_j$, we can obtain $Re\lambda_j\leq \frac{1}{4}$, otherwise
 \begin{equation}\begin{split}
 \prod_{i=1}^{k}\frac{\left(2i-1\right)^2}{4}=&\lambda_j\left(\lambda_j+2\right)\cdot\cdot\cdot\left(\lambda_j+k\left(k-1\right)\right)\\
 =&|\lambda_j\left(\lambda_j+2\right)\cdot\cdot\cdot\left(\lambda_j+k\left(k-1\right)\right)|\\
 =&|\lambda_j||\left(\lambda_j+2\right)|\cdot\cdot\cdot|\left(\lambda_j+k\left(k-1\right)\right)|\\
 \geq&Re\lambda_j \left(Re\lambda_j+2\right)\cdot\cdot\cdot  \left(Re\lambda_j+k\left(k-1\right)\right)\\
 > &\prod_{i=1}^{k}\frac{\left(2i-1\right)^2}{4},
\end{split} \end{equation}
thus it leads to a contradiction.
Let $-\frac{n^
2-2n}{4}-\lambda_j=\tilde{\lambda_j}$, then $\left(P_1-\lambda_j\right)^{-1}=\left(\tilde{\lambda_j}-\Delta_{\mathbb{H}^n}\right)^{-1}$. According to Lu and Yang's work in \cite{LY19, LY22}, we know that the explicit formula of kernel function $\left(\tilde{\lambda_j}-\Delta_{\mathbb{H}^n}\right)^{-1}$ for real valued $\lambda_j\leq \frac{1}{4}$ is as follows:
$$\left(\tilde{\lambda_j}-\Delta_{\mathbb{H}^n}\right)^{-1}=\frac{A_n}{\left(\sinh\rho\right)^{n-2}}\int_{0}^{\pi}\left(\cosh\rho+\cos t\right)^{\frac{n-4}{2}-\theta_{n}\left(\tilde{\lambda_j}\right)}\left(\sin t\right)^{2\theta_n\left(\tilde{\lambda_j}\right)+1}dt,\ \ n\geq 3$$
where
$$\theta_{n}\left(\tilde{\lambda_j}\right)=\sqrt{\tilde{\lambda_j}+\frac{\left(n-1\right)^2}{4}}-\frac{1}{2},$$
$$A_n=\left(2\pi\right)^{-\frac{n}{2}}\frac{\Gamma\left(\frac{
n}{2}+\theta_{n}\left(\tilde{\lambda_j}\right)\right)}{2^{\theta_{n}\left(\tilde{\lambda_j}\right)+1}\Gamma\left(\theta_{n}\left(\tilde{\lambda_j}\right)+1\right)}.$$
Then
\begin{equation}\begin{split}
&\frac{d}{d\rho}\left(\tilde{\lambda_j}-\Delta_{\mathbb{H}^n}\right)^{-1}\\
&\ \ =\frac{A_n}{\sinh^{n-2}\rho}\int_0^\pi\left(\left(2-n\right)\frac{\cosh\rho}{\sinh\rho}+\left(\frac{n-4}{2}-\theta_{n}\left(\tilde{\lambda_j}\right)\right)
\frac{\sinh \rho}{\cosh\rho+\cos t}\right)\cdot\\
&\ \ \ \ \left(\cosh\rho+\cos t\right)^{\frac{n-4}{2}-\theta_{n}\left(\tilde{\lambda_j}\right)}\left(\sin t\right)^{2\theta_{n}\left(\tilde{\lambda_j}\right)+1}dt.
\end{split}\end{equation}
Let
\begin{equation}\begin{split}
f\left(\rho\right)=&\left(2-n\right)\frac{\cosh\rho}{\sinh\rho}+\left(\frac{n-4}{2}-\theta_{n}\left(\tilde{\lambda_j}\right)\right)
\frac{\sinh \rho}{\cosh\rho+\cos t}\\
\leq& \left(2-n\right)\frac{\cosh\rho}{\sinh\rho}+\left(\frac{n-3}{2}\right)
\frac{\sinh \rho}{\cosh\rho-1}\\
=&\left(2-n\right)\frac{2\cosh^2\frac{\rho}{2}-1}{2\cosh\frac{\rho}{2}\sinh\frac{\rho}{2}}+\frac{n-3}{2}\frac{2\cosh\frac{\rho}{2}\sinh\frac{\rho}{2}}{2\sinh^2\frac{\rho}{2}}\\
=&\frac{1-n}{2}\frac{\cosh\frac{\rho}{2}}{\sinh\frac{\rho}{2}}+\left(n-2\right)\frac{1}{2\cosh\frac{\rho}{2}\sinh\frac{\rho}{2}}\\
=&\frac{1}{\sinh\frac{\rho}{2}}\left(\frac{1-n}{2}\cosh\frac{\rho}{2}+\left(n-2\right)\frac{1}{2\cosh\frac{\rho}{2}}\right)\\
\leq&\frac{1}{\sinh\frac{\rho}{2}}\left(\frac{1-n}{2}+\frac{n-2}{2}\right)\\
<&0.
\end{split}\end{equation}
Thus, $\left(P_1-\lambda_j\right)^{-1}\left(x,y\right)$ is strictly
decreasing about $\rho\left(T_{x}\left(y\right)\right)$ for $\tilde{\lambda_j}$ for real valued $\lambda_j\leq \frac{1}{4}$.
From the Lemma 6.2 of \cite{LLY}, for the complex valued $\lambda_j$'s case, we know that
$\left(P_1-\lambda_j\right)^{-1}\ast\left(P_1-\overline{\lambda_j}\right)^{-1}$ is positive radially decreasing function,  where $\overline{\lambda_j}$ is the conjugates of $\lambda_j$. In \cite{LLY}, the authors also proved that for any positive functions $H_1\left(x,y\right), H_2\left(x,y\right): \left(\mathbb{B}^n\times \mathbb{B}^n \right)\to \mathbb{R}$, if both of them are decreasing with respect to $\rho\left(x,y\right)$, then $L\left(x,y\right)=\int_{\mathbb{B}^n}H_1\left(x,z\right)H_2\left(z,y\right)dV_Z$ is also decreasing with respect to $\rho\left(x,y\right)$.
Since $$G_{k}^{-1}\left(x,y\right)=\left(P_1-\lambda_k\right)^{-1}\ast \left(P_1-\lambda_{k-1}\right)^{-1}\cdot\cdot\cdot \ast \left(P_1-\lambda_1\right)^{-1},$$
 the kernel function of $G_{k}^{-1}$ is positive and strictly decreasing about $\rho\left(x,y\right)$.
\end{proof}
\medskip

\subsection{Tightness of the minimizing sequence}\label{subsectight}

\begin{proposition}\label{protight}
If $\{f_m\}$ is the radially decreasing minimizing sequence for minimizing  problem \eqref{minimizing}, then $\{f_m\}$ is tight, that is, for any $\epsilon>0$, there exists a $0<R_\epsilon<1$ such that
$$\int_{ B^n\left(0,R_{\epsilon}\right)}|f_m|^{\frac{2n}{n-2k}}dV\geq 1-\epsilon \ \ {\rm for}\ {\rm all}\ m\in\mathbb{N}.$$
\end{proposition}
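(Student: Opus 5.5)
Apply Lions' first concentration-compactness lemma to the probability densities $\mu_m=|f_m|^{\frac{2n}{n-2k}}dV$ on $\mathbb{B}^n$. Up to a subsequence, one of three things happens: compactness up to hyperbolic translations $T_{a_m}$, vanishing, or dichotomy. We rule out vanishing and dichotomy. Because each $f_m$ is radially decreasing about the origin, compactness up to translations can then be upgraded to tightness about the origin itself. This gives the statement, since Euclidean balls $B^n(0,R)$ with $R<1$ are exactly the hyperbolic balls $B_{\mathbb{H}}(0,r)$.

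\textbf{Excluding vanishing.} The key input is a uniform radial decay estimate. Since $f_m$ is radially decreasing and $\int|f_m|^{\frac{2n}{n-2k}}dV=1$, for every $x$ we have
\[
|f_m(x)|^{\frac{2n}{n-2k}}\,\tau\bigl(B_{\mathbb{H}}(0,\rho(x))\bigr)\le 1 .
\]
The hyperbolic volume grows like $e^{(n-1)\rho}$, so $f_m$ decays uniformly and exponentially in $\rho$. Now suppose vanishing occurs, i.e. $\sup_{a}\mu_m(B_{\mathbb{H}}(a,r))\to0$ for every $r$. The mass then cannot sit in a fixed ball around the origin. It also cannot sit near infinity: the pointwise bound together with $L^2$ control gives small $L^{\frac{2n}{n-2k}}$ mass outside large balls. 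Here we use the Poincar\'e-Sobolev inequality \eqref{BPS} with some $2<p<\frac{2n}{n-2k}$ and the norm bound $\|f_m\|_H\le C$, interpolating between the uniform pointwise bound and the $L^p$ bound. Concretely, we bound $\int_{\rho>R}|f_m|^{\frac{2n}{n-2k}}dV\le \sup_{\rho>R}|f_m|^{\frac{2n}{n-2k}-p}\int|f_m|^pdV$, which tends to zero uniformly in $m$ as $R\to\infty$. So for large $R$ most of the mass lies in $B_{\mathbb{H}}(0,R)$, which is covered by finitely many balls of radius $1$. This contradicts vanishing.

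\textbf{Excluding dichotomy.} This is the main obstacle, because without a scaling cutoff there is no way to localize $\|\cdot\|_H$ directly. Following the strategy announced earlier, we pass to the dual HLS problem. By Lemma \ref{relationshipsequence}, $h_m=f_m^{\frac{n+2k}{n-2k}}$ is a maximizing sequence for \eqref{BHLS}, with $\|h_m\|_{L^{\frac{2n}{n+2k}}}=1$ and the same mass distribution, since $|h_m|^{\frac{2n}{n+2k}}=|f_m|^{\frac{2n}{n-2k}}$. Suppose dichotomy holds with splitting parameter $\alpha\in(0,1)$. Then we can write $h_m=h_m^1+h_m^2+r_m$, where the supports of $h_m^1,h_m^2$ separate, i.e. their mutual distance tends to infinity, and $\|r_m\|\to0$. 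The bilinear form then satisfies
\[
\iint h_m G_k^{-1} h_m=\iint h^1_m G_k^{-1}h^1_m+\iint h^2_m G_k^{-1}h^2_m+o(1).
\]
The cross term vanishes because $G_k^{-1}(x,y)\to0$ as $\rho\to\infty$; this follows from the explicit kernels in Lemma \ref{kernel-decreasing} and their integrability against $L^{\frac{2n}{n+2k}}$ functions away from the diagonal. Applying \eqref{BHLS} to each piece bounds the limit by
\[
C^{-1}\bigl(\alpha^{\frac{n+2k}{n}}+(1-\alpha)^{\frac{n+2k}{n}}\bigr)<C^{-1},
\]
by strict convexity since $\frac{n+2k}{n}>1$. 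This contradicts maximality.

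We conclude that compactness holds: there are $a_m$ such that $\mu_m(B_{\mathbb{H}}(a_m,R_\epsilon))\ge1-\epsilon$. Radial monotonicity forces $\rho(a_m)$ to stay bounded. Otherwise the ball $B_{\mathbb{H}}(a_m,R_\epsilon)$ lies where the uniform decay estimate makes the mass small, once again via the interpolation estimate. Enlarging the radius then gives tightness around the origin.
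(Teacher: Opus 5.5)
Your proposal follows essentially the same route as the paper. Both run Lions' trichotomy for the common density $|f_m|^{\frac{2n}{n-2k}}=|h_m|^{\frac{2n}{n+2k}}$, exclude vanishing by combining the radial decay of $f_m$ with the uniform subcritical $L^p$ bound coming from \eqref{BPS}, and exclude dichotomy on the dual Hardy--Littlewood--Sobolev side via Lemma \ref{relationshipsequence}, a vanishing cross term, and strict superadditivity of $t\mapsto t^{\frac{n+2k}{n}}$. The paper differs only in using an origin-centred trichotomy for radial sequences instead of translating and recentring, and in killing the cross term with a pointwise kernel bound plus a conformal reduction to Euclidean HLS rather than by appealing to decay of $G_k^{-1}$ at infinity.

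Note, though, that your own tail estimate $\int_{\{\rho>R\}}|f_m|^{\frac{2n}{n-2k}}\,dV\le C\,\tau(B_{\mathbb{H}}(0,R))^{\frac{p(n-2k)}{2n}-1}$, with $2<p<\frac{2n}{n-2k}$ fixed, is uniform in $m$ and is by itself exactly the asserted tightness, for the whole sequence and not merely a subsequence. So the dichotomy and recentring steps are not actually needed, neither in your argument nor in the paper's.
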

Before proving the Proposition \ref{protight},  we need the following  first Lions' concentration-compactness lemma on the hyperbolic space for radially decreasing sequence. Following the idea of the proof of Lemma 1.1 in \cite{Lions841}, we can obtain  Lemma \ref{cc1}. We only clarify this Lemma and omit the proof here.

\begin{lemma}\label{cc1}
Let $\{|h_m|^{\frac{2n}{n+2k}}\}$ be a nonnegative  and radially decreasing sequence satisfying:
$$\int_{\mathbb{B}^n}|h_m|^{\frac{2n}{n+2k}}dV=1.$$

Then there exists a subsequence still denoted by $\{|h_m|^{\frac{2n}{n+2k}}\}$ such that one of the following conditions holds:

(a)\ (Compactness)\  For any $\epsilon>0$, there exists $R_{\epsilon}>0$
such that $$\int_{B^n\left(0,R_{\epsilon}\right)}|h_m|^{\frac{2n}{n+2k}}dV\geq 1-\epsilon \ \ {\rm for}\ {\rm all}\ m;$$

(b)\ (Vanishing)\  For all $R>0$, there holds:
$$\lim_{m\rightarrow\infty}\left(\int_{B^n\left(0,R\right)}|h_m|^{\frac{2n}{n+2k}}dV\right)=0;$$

(c)\ (Dichotomy)\ There exists a $\lambda \in\left(0,1\right)$ such that for all $\epsilon >0$, there exist $R$ close to 1, a sequence $\{R_m\}$ satisfying $R_m\to 1$ as $m\to\infty$ such that
$$h^1_m=h_m \chi_{B^n\left(0,R\right)},\ \ h^2_m  = h_m \chi_{\mathbb{B}^n\setminus B^n\left(0,R_{m}\right)},$$
$$\limsup_{m\rightarrow\infty}\left(|\lambda- \int_{\mathbb{B}^n}|h^1_m|^{\frac{2n}{n+2k}}dV|+|\left(1-\lambda\right)-\int_{\mathbb{B}^n}|h_m^2|^{\frac{2n}{n+2k}}dV|\right)\leq \epsilon.$$
\end{lemma}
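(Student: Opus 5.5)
The plan is to adapt Lions' proof of Lemma 1.1 in \cite{Lions841}, using the concentration function built from Euclidean balls centred at the origin. Because each $\rho_m:=|h_m|^{\frac{2n}{n+2k}}$ is radially decreasing about the origin, no translations are needed. For each $m$ and each $0\le r<1$ set
$$Q_m(r)=\int_{B^n(0,r)}|h_m|^{\frac{2n}{n+2k}}dV .$$
Each $Q_m$ is nondecreasing in $r$, satisfies $0\le Q_m\le 1$, and $Q_m(r)\to 1$ as $r\to1$ since $\int_{\mathbb{B}^n}\rho_m\,dV=1$. It is convenient to reparametrize by hyperbolic radius, $t=\rho(r)\in[0,\infty)$, so that $Q_m$ becomes a uniformly bounded family of nondecreasing functions on $[0,\infty)$. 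Helly's selection theorem then gives a subsequence and a nondecreasing function $Q$ with $Q_m(t)\to Q(t)$ for every $t$. We set
$$\lambda=\lim_{t\to\infty}Q(t)\in[0,1]$$
and split into the three cases $\lambda=0$, $\lambda=1$, $0<\lambda<1$.

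If $\lambda=0$, then $Q\equiv0$, so $Q_m(R)\to0$ for every $R<1$; this is vanishing (b). If $\lambda=1$, fix $\epsilon>0$ and choose $R_0$ with $Q(R_0)>1-\epsilon/2$. Then $Q_m(R_0)\ge 1-\epsilon$ for all $m\ge m_0$. For the finitely many indices $m<m_0$, integrability gives radii $R^{(m)}<1$ with $Q_m(R^{(m)})\ge1-\epsilon$. Taking $R_\epsilon$ to be the maximum of these finitely many radii gives (a) for all $m$. This is the standard step and uses only that a finite family of $L^1$ functions is uniformly tight.

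If $0<\lambda<1$, fix $\epsilon>0$ and choose $R$ close to $1$ with $Q(R)>\lambda-\epsilon/4$, so that $\lambda-\epsilon/4<Q(R)\le\lambda$. Pointwise convergence at $R$ gives $|Q_m(R)-\lambda|\le\epsilon/2$ for large $m$; this handles $h_m^1=h_m\chi_{B^n(0,R)}$. For $h_m^2$ we need radii $R_m\to1$ with $Q_m(R_m)\to\lambda$. By a diagonal argument, pick an increasing sequence $r_j\to1$ with $Q(r_j)\to\lambda$, and indices $m_j$ increasing such that $|Q_m(r_j)-Q(r_j)|<1/j$ for all $m\ge m_j$. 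Then define $R_m=r_j$ for $m_j\le m<m_{j+1}$. This gives $R_m\to1$ and $Q_m(R_m)\to\lambda$, hence
$$\int_{\mathbb{B}^n}|h_m^2|^{\frac{2n}{n+2k}}dV=1-Q_m(R_m)\to1-\lambda .$$
Consequently the $\limsup$ in (c) is at most $\epsilon/2$, which proves dichotomy. We may also arrange $R<R_m$ for large $m$, so that the two pieces have disjoint supports.

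The only delicate point is the diagonal choice of $R_m$ in the dichotomy case. It must work simultaneously for all $\epsilon$ along a single subsequence, and the construction above achieves this because $R_m$ does not depend on $\epsilon$. Radial monotonicity is not actually needed here beyond fixing the centre at the origin. It becomes relevant later, through Lemma \ref{radial}, to rule out vanishing.
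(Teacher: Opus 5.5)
Your proposal is correct and is precisely the Lions Lemma 1.1 argument that the paper cites in place of a proof: Helly selection applied to the origin-centred concentration function, with a diagonal choice of $R_m$ in the dichotomy case. You are also right that radial monotonicity plays no role in this statement itself.
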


In order to exclude the vanishing phenomenon for $|h_m|^{\frac{2n}{n+2k}}$, we prove a radial Lemma.

\begin{lemma}\label{radial}
Let $\{f_m\}$ be a radially decreasing minimizing sequence for the critical Poincar\'e-Sobolev inequality on the hyperbolic space. Then there exists a constant $D_{n,k,p}>0$ such that $f_m\left(R\right)\leq D_{n,k,p}\left(1-R\right)^{\frac{n-1}{p}}$ for any $\frac{1}{2}\leq R<1$ and $2<p<\frac{2n}{n-2k}$.
\end{lemma}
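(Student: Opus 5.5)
The plan is a Strauss-type radial estimate obtained by comparing the $L^p$ mass of $f_m$ on a hyperbolic ball with its value at the boundary of that ball. Since $f_m$ is radially decreasing about the origin, for $\frac12\le R<1$ we have $f_m(y)\ge f_m(R)$ on $B^n(0,R)$, so
$$f_m(R)^p\,\tau\big(B^n(0,R)\big)\le \int_{B^n(0,R)}|f_m|^pdV\le \int_{\mathbb{B}^n}|f_m|^pdV .$$
This direction only gives decay in terms of the inverse volume of the ball $B^n(0,R)$, which grows like $(1-R)^{-(n-1)}$. That is exactly the rate $(1-R)^{\frac{n-1}{p}}$ in the statement, so this simple bound suffices, provided two facts hold: the volume lower bound, and a bound on $\int|f_m|^pdV$ that is uniform in $m$.

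\textbf{Step 1 (volume).} From $\tau(B^n(0,R))=\omega_{n-1}\int_0^R(\frac{2}{1-s^2})^ns^{n-1}ds$, restrict to $s\in(\frac12,R)$, where $s^{n-1}\ge 2^{1-n}$ and $\frac{2}{1-s^2}\ge\frac{1}{1-s}$. Integrating $(1-s)^{-n}$ gives
$$\tau(B^n(0,R))\ge c_n\big((1-R)^{1-n}-2^{n-1}\big).$$
For $R$ close to 1 this is at least $c_n'(1-R)^{1-n}$. For $R$ in a compact subrange of $[\frac12,1)$ the ball volume is bounded below anyway, and $(1-R)^{1-n}$ is bounded above there. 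Hence $\tau(B^n(0,R))\ge c(1-R)^{1-n}$ for all $\frac12\le R<1$.

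\textbf{Step 2 (uniform $L^p$ bound).} I need $\sup_m\|f_m\|_{L^p(dV)}<\infty$ for $2<p<\frac{2n}{n-2k}$. Take $q=\frac{2n}{n-2k}$. The subcritical Poincar\'e--Sobolev inequality \eqref{BPS} with exponent $p$, valid for all $2<p\le q$, gives
$$\|f_m\|_{L^p}^2\le C_{n,k,p}^{-1}\|f_m\|_{H(\mathbb{B}^n)}^2.$$
Since $\{f_m\}$ is minimizing, $\|f_m\|_{H}^2\to C_{n,k,\frac{2n}{n-2k}}$, so these norms are uniformly bounded. (Should one prefer to avoid \eqref{BPS} at exponent $p$, the Poincar\'e inequality \eqref{Bpoincare} gives a uniform $L^2$ bound of the form $\|f\|_{L^2}^2\lesssim\|f\|_H^2$ after using the spectral gap. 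This argument is not immediate at the bottom of the spectrum, which is why I take the route through \eqref{BPS} with exponent $p$.) The result is $\int|f_m|^pdV\le M$ with $M$ independent of $m$.

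\textbf{Step 3 (conclusion).} Combining the two steps,
$$f_m(R)\le \big(M/c\big)^{1/p}(1-R)^{\frac{n-1}{p}},$$
so one may take $D_{n,k,p}=(M/c)^{1/p}$.

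The step I expect to need the most care is Step 2. It has to be checked that the minimizing sequence produced by Proposition \ref{proradiallysequence} (via $f_m=G_k^{-1}(h_m)$, "up to constants") is normalized so that $\|f_m\|_H$ stays bounded, and that $f_m$ is nonnegative so that the monotonicity argument applies to $|f_m|$. Positivity follows from Lemma \ref{kernel-decreasing}, since $f_m=G_k^{-1}(h_m)$ with a positive kernel and $h_m\ge0$ after rearrangement.
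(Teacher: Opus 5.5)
Your proof is correct and is essentially the paper's argument: a uniform $L^p$ bound on $f_m$ from the subcritical inequality \eqref{BPS}, radial monotonicity giving $f_m(R)^p\,\tau(B^n(0,R))\lesssim 1$, and the volume growth $\tau(B^n(0,R))\gtrsim(1-R)^{1-n}$, which you derive explicitly on all of $[\frac12,1)$ where the paper only cites L'Hospital's rule as $R\to1$. Drop your parenthetical alternative via an $L^2$ bound, because no bound $\|f\|_{L^2}\lesssim\|f\|_{H}$ holds: the Poincar\'e constant $\prod_{i=1}^{k}\frac{(2i-1)^2}{4}$ is the bottom of the spectrum, so there is no spectral gap.
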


\begin{proof}
Recall that $\{f_m\}$ satisfies
$$\lim\limits_{m\rightarrow +\infty}\left(\int_{\mathbb{B}^n}P_k\left(f_m\right)f_mdV-\prod_{i=1}^{k}\frac{\left(2i-1\right)^2}{4}\int_{\mathbb{B}^n}|f_m|^2dV\right)=C_{n,k,\frac{2n}{n-2k}}$$
and
$$\int_{\mathbb{B}^n}|f_m|^{\frac{2n}{n-2k}}dV=1.$$
By high order subcritical Poincar\'e-Sobolev inequality, that is,
$$\left(\int_{\mathbb{B}^n}P_k\left(f_m\right)f_mdV-\prod_{i=1}^{k}\frac{\left(2i-1\right)^2}{4}\int_{\mathbb{B}^n}|f_m|^2dV\right)\geq C_{n,k,p}\left(\int_{\mathbb{B}^n}|f_m|^pdV\right)^{\frac{2}{p}}$$ for any
$2<p<\frac{2n}{n-2k}$,  we obtain that $f_m$ is bounded in $L^p\left(\mathbb{B}^n\right)$ for
$2<p<\frac{2n}{n-2k}$. By the radial monotonicity of $f_m$, for $0<R<1$, we have
$$f^p_m\left(R\right)\int_{B^n\left(0,R\right)}\left(\frac{2}{1-|x|^2}\right)^ndx\leq \int_{B^n\left(0,1\right)}f_m^p\left(x\right) \left(\frac{2}{1-|x|^2}\right)^ndx\lesssim 1.$$
Then we have  $f_m^p\left(R\right)w_{n-1}\int_{0}^{R}\left(\frac{2}{1-s^2}\right)^n s^{n-1}ds\lesssim 1$. According to L'Hospital's Rule, it is not difficult to check that $\int_{0}^{R}\left(\frac{2}{1-s^2}\right)^n s^{n-1}ds$ is equivalent to $\frac{1}{\left(1-R\right)^{n-1}}$ when $R$ approaches to $1$.
Hence there exists $D_{n,k,p}>0$ such that $f_m^p\left(R\right)\left(\frac{1}{1-R}\right)^{n-1} \leq D_{n,k,p}^p$ when $\frac{1}{2}\leq R<1 $, that is, $f_m\left(R\right)\leq D_{n,k,p}\left(1-R\right)^{\frac{n-1}{p}}$ for $\frac{1}{2}\leq R<1 $.
\end{proof}

Now we turn to prove Proposition \ref{protight}.

{\it Proof of Proposition \ref{protight}.}
$\{f_m\}$ is the radially decreasing minimizing sequence for the critical Poincar\'e-Sobolev inequality on the hyperbolic space.
According to Lemma \ref{relationshipsequence}, we know that $h_m=(f_m)^{\frac{n+2k}{n-2k}}$ is the maximinzing sequence for the Hardy-Littlewood-Sobolev inequality \eqref{BHLS} on the hyperbolic space. We need to note that
$$|h_m|^{\frac{2n}{n+2k}}=|f_m|^{\frac{2n}{n-2k}}.$$
Once we proved the tightness for the maximizing sequence $|h_m|^{\frac{2n}{n+2k}}$, we can derive the tightness for the minimizing sequence $|f_m|^{\frac{2n}{n-2k}}$ of the critical Poincar\'e-Sobolev inequality on the hyperbolic space. We obtain this through excluding the the vanishing phenomenon and dichotomy phenomenon for maximizing sequence $|h_m|^{\frac{2n}{n+2k}}$ of Hardy-Littlewood-Sobolev inequality \eqref{BHLS} on the hyperbolic space.

We first  exclude the vanishing phenomenon.
Through  Lemma \ref{relationshipsequence} and \ref{radial}, we have that
$$h_m\left(R\right)\leq \left(D_{n,k,p} \left(1-R\right)^{\frac{n-1}{p}}\right)^{\frac{n+2k}{n-2k}} \ \ \text{for} \ \ \frac{1}{2}\leq R<1,\ \  2<p<\frac{2n}{n-2k}.$$
 Suppose that vanishing phenomenon occurs for  $\{h_m\}$, that is,
$$\lim\limits_{m\rightarrow +\infty}\int_{B^n\left(0,R\right)}|h_m|^{\frac{2n}{n+2k}}dV=0\ \  \text{for all} \ 0< R<1, $$
which follows that 
\begin{equation}\label{outerest}
\lim\limits_{R\rightarrow 1}\lim\limits_{m\rightarrow +\infty}\int_{B^n\left(0,R\right)}|h_m|^{\frac{2n}{n+2k}}\left(\frac{2}{1-|x|^2}\right)^ndx=0.   
\end{equation}
It is not difficult to check that for $2<p<\frac{2n}{n-2k}$,
$$\lim\limits_{R\rightarrow 1} \int_{B^n\left(0,1\right) \setminus B^n\left(0,R\right)}|\left(1-|x|\right)^{\frac{n-1}{p}}|^{\frac{2n}{n-2k}}\left(\frac{2}{1-|x|^2}\right)^ndx=0,$$
which implies that $$\lim\limits_{R\rightarrow 1}\lim\limits_{m\rightarrow +\infty}\int_{B^n\left(0,1\right)\setminus B^n\left(0,R\right)}|h_m|^{\frac{2n}{n+2k}}\left(\frac{2}{1-|x|^2}\right)^ndx=0.$$
Combining this and \eqref{outerest}, we conclude that $$\lim\limits_{m\rightarrow +\infty}\int_{\mathbb{B}^n}|h_m|^{\frac{2n}{n+2k}}dV=0,$$ which contradicts $\int_{\mathbb{B}^n}|h_m|^{\frac{2n}{n+2k}}dV=1$.

\medskip
 We continue to eliminate dichotomy phenomenon. If case (c) occurs, we derive

$$\lim_{m\rightarrow\infty} \int_{\mathbb{B}^n}|h^1_m|^{\frac{2n}{n+2k}}dV=\lambda,\ \ \lim_{m\rightarrow\infty}\int_{\mathbb{B}^n}|h_m^2|^{\frac{2n}{n+2k}}dV=1-\lambda.$$

Let $h^3_m =f_m \chi_{B^n\left(0,R_{m}\right)\setminus B^n\left(0,R\right)}$, then
$$\lim_{m\rightarrow\infty} \int_{\mathbb{B}^n}|h^3_m|^{\frac{2n}{n+2k}}dV=0.$$ We now claim that
\small{\begin{equation}\begin{split}\label{dic}
&\lim\limits_{m\rightarrow +\infty}\int_{\mathbb{B}^n}\int_{\mathbb{B}^n}h_m\left(x\right)G_k^{-1}\left(x,y\right)h_m\left(y\right)dV_{x}dV_{y}\\
&=\lim\limits_{m\rightarrow +\infty}\int_{\mathbb{B}^n}\int_{\mathbb{B}^n}h_m^1\left(x\right)G_k^{-1}\left(x,y\right)h_m^1\left(y\right)dV_{x}dV_{y}+
\lim\limits_{m\rightarrow +\infty}\int_{\mathbb{B}^n}\int_{\mathbb{B}^n}h_m^2\left(x\right)G_k^{-1}\left(x,y\right)h_m^2\left(y\right)dV_{x}dV_{y}.\\
\end{split}\end{equation}}
In fact, we first have
\begin{equation}\begin{split}
&\lim\limits_{m\rightarrow +\infty}\int_{\mathbb{B}^n}\int_{\mathbb{B}^n}h_m\left(x\right)G_k^{-1}\left(x,y\right)h_m\left(y\right)dV_{x}dV_{y}\\
&=\lim\limits_{m\rightarrow +\infty}\int_{\mathbb{B}^n}\int_{\mathbb{B}^n}h_m^1\left(x\right)G_k^{-1}\left(x,y\right)h_m^1\left(y\right)dV_{x}dV_{y}+
\lim\limits_{m\rightarrow +\infty}\int_{\mathbb{B}^n}\int_{\mathbb{B}^n}h_m^2\left(x\right)G_k^{-1}\left(x,y\right)h_m^2\left(y\right)dV_{x}dV_{y}\\
&\ \ +2\lim\limits_{m\rightarrow +\infty}\int_{\mathbb{B}^n}\int_{\mathbb{B}^n}h_m^1\left(x\right)G_k^{-1}\left(x,y\right)h_m^2\left(y\right)dV_{x}dV_{y}
\\
&\ \ +2\lim\limits_{m\rightarrow +\infty}\int_{\mathbb{B}^n}\int_{\mathbb{B}^n}\left(h_m^1(x)+h_m^2(x)\right)G_k^{-1}\left(x,y\right)h_m^3\left(y\right)dV_{x}dV_{y}\\
&\ \ + \lim\limits_{m\rightarrow +\infty}\int_{\mathbb{B}^n}\int_{\mathbb{B}^n}h_m^3\left(x\right)G_k^{-1}\left(x,y\right)h_m^3\left(y\right)dV_{x}dV_{y}.
\end{split}\end{equation}
Since
\begin{equation*}\begin{split}
&\lim\limits_{m\rightarrow +\infty}\int_{\mathbb{B}^n}\int_{\mathbb{B}^n}\left(h_m^1(x)+h_m^2(x)\right)G_k^{-1}\left(x,y\right)h_m^3\left(y\right)dV_{x}dV_{y}\\
&\ \ \lesssim \lim\limits_{m\rightarrow +\infty}\left(\int_{\mathbb{B}^n}|h^3_m|^{\frac{2n}{n+2k}}dV\right)^{\frac{n+2k}{2n}}\left(\int_{\mathbb{B}^n}|h^1_m+h_m^2|^{\frac{2n}{n+2k}}dV\right)^{\frac{n+2k}{2n}}=0
\end{split}\end{equation*}
and
$$\lim\limits_{m\rightarrow +\infty}\int_{\mathbb{B}^n}\int_{\mathbb{B}^n}h_m^3\left(x\right)G_k^{-1}\left(x,y\right)h_m^3\left(y\right)dV_{x}dV_{y}
\lesssim \lim\limits_{m\rightarrow +\infty}\left(\int_{\mathbb{B}^n}|h^3_m|^{\frac{2n}{n+2k}}dV\right)^{\frac{n+2k}{n}}=0,$$
in order to prove equality \eqref{dic}, we only need to prove that
$$\lim\limits_{m\rightarrow +\infty}\int_{\mathbb{B}^n}\int_{\mathbb{B}^n}h_m^1\left(x\right)G_k^{-1}\left(x,y\right)h_m^2(y)dV_{x}dV_{y}=0.$$

From Lu-Yang's work in \cite{LY19}, we know that
$$G_k^{-1}\left(x,y\right))\lesssim \left(\frac{1}{\sinh(\rho(x,y))}\right)^{n-2k}.$$
Then it follows that
\begin{equation}\begin{split}
&\lim\limits_{m\rightarrow +\infty}\int_{\mathbb{B}^n}\int_{\mathbb{B}^n}h_m^1\left(x\right)G_k^{-1}\left(x,y\right)h_m^2(y)dV_{x}dV_{y}\\
&\ \ \lesssim \lim\limits_{m\rightarrow +\infty}\int_{\mathbb{B}^n}\int_{\mathbb{B}^n}h_m^1(x)\left(\frac{1}{\sinh(\rho(x,y))}\right)^{n-2k} h_m^2\left(y\right)dV_{x}dV_{y}\\
&\ \ =\lim\limits_{m\rightarrow +\infty}\int_{\mathbb{B}^n}\int_{\mathbb{B}^n}h_m^1(x)\left(\frac{2}{1-|x|^2}\right)^{\frac{n+2k}{2}}|x-y|^{-(n-2k)} h_m^2(y)\left(\frac{2}{1-|y|^2}\right)^{\frac{n+2k}{2}}dxdy\\
&\ \ = \lim\limits_{m\rightarrow +\infty}\int_{\mathbb{B}^n}\int_{\mathbb{B}^n}\widetilde{h_m^1}(x)|x-y|^{-(n-2k)} \widetilde{h_m^2}(y)dxdy
\end{split}\end{equation}
where
$$h_m^1(x)\left(\frac{2}{1-|x|^2}\right)^{\frac{n+2k}{2}}=\widetilde{h_m^1}(x), \ \ h_m^2(y)\left(\frac{2}{1-|y|^2}\right)^{\frac{n+2k}{2}}=\widetilde{h_m^2}(y).$$
When $x\in B^n(0, R)$ and $y\in B^n\setminus B^n(0, R_m)$, there exists $\delta>0$ such that
$$|x-y|^{-(n-2k)} \lesssim |x-y|^{-(n-2k-\delta)}.$$
Then it follows that
\begin{equation*}\begin{split}
&\lim\limits_{m\rightarrow +\infty}\int_{\mathbb{B}^n}\int_{\mathbb{B}^n}\widetilde{h_m^1}(x)|x-y|^{-(n-2k)} \widetilde{h_m^2}(y)dxdy\\
&\ \ \lesssim \lim\limits_{m\rightarrow +\infty}\int_{\mathbb{B}^n}\int_{\mathbb{B}^n}\widetilde{h_m^1}(x)|x-y|^{-(n-2k-\delta)} \widetilde{h_m^2}(y)dxdy\\
&\ \ \lesssim \left(\int_{\mathbb{B}^n}|\widetilde{h_m^1}|^{\frac{2n}{n+2k}}dx\right)^{\frac{n+2k}{2n}} \left(\int_{\mathbb{B}^n}|\widetilde{h_m^2}|^{p_\delta}dx\right)^{\frac{1}{p_\delta}},
\end{split}\end{equation*}
where in the last inequality, we used the Hardy-Littlewood-Sobolev in $\mathbb{R}^n$ with
$$\frac{n+2k}{2n}+\frac{1}{p_\delta}+\frac{n-2k-\delta}{n}=2$$ and $p_\delta<\frac{2n}{n+2k}$.
Careful computation yields
$$\int_{\mathbb{B}^n}|\widetilde{h_m^1}|^{\frac{2n}{n+2k}}dx=\int_{\mathbb{B}^n}|h_m^1|^{\frac{2n}{n+2k}}\left(\frac{2}{1-|x|^2}\right)^ndx\leq \int_{\mathbb{B}^n}|h_m|^{\frac{2n}{n+2k}}dV\leq 1.$$
and
\begin{equation}\begin{split}
\lim\limits_{m\rightarrow +\infty}\int_{\mathbb{B}^n}|\widetilde{h_m^2}|^{p_\delta}dx&=\lim\limits_{m\rightarrow +\infty}\int_{\mathbb{B}^n\setminus B^n(0,R_m)}|h_m^2|^{p_\delta}\left(\frac{2}{1-|x|^2}\right)^{\frac{n+2k}{2}p_\delta}dV\\
&=\lim\limits_{m\rightarrow +\infty}\int_{\mathbb{B}^n\setminus B^n(0,R_m)}|h_m^2|^{p_\delta}\left(\frac{2}{1-|x|^2}\right)^{n} \left(\frac{2}{1-|x|^2}\right)^{\frac{n+2k}{2}p_\delta-n}dx\\
&\ \ \leq \lim\limits_{m\rightarrow +\infty}\int_{\mathbb{B}^n\setminus B^n(0,R_m)}|h_m^2|^{p_\delta}dV \left(\frac{2}{1-|R_m|^2}\right)^{\frac{n+2k}{2}p_\delta-n}=0,
\end{split}\end{equation}
where the last equality holds since $$\lim\limits_{m\rightarrow +\infty} \left(\frac{2}{1-|R_m|^2}\right)^{\frac{n+2k}{2}p_\delta-n}=0$$ and
$$\int_{\mathbb{B}^n\setminus B^n(0,R_m)}|h_m^2|^{p_\delta}dV\leq \int_{\mathbb{B}^n\setminus B^n(0,R_m)}|f_m|^{\frac{n+2k}{n-2k}p_\delta}dV\lesssim \left(\int_{\mathbb{B}^n}P_k\left(f_m\right)f_mdV-\prod_{i=1}^{k}\frac{\left(2i-1\right)^2}{4}\int_{\mathbb{B}^n}|f_m|^2dV\right)^{\frac{1}{2}} $$ through inequality \eqref{BPS}. Combining the above estimate, we prove that
\small{\begin{equation*}\begin{split}
&\lim\limits_{m\rightarrow +\infty}\int_{\mathbb{B}^n}\int_{\mathbb{B}^n}h_m\left(x\right)G_k^{-1}\left(x,y\right)h_m\left(y\right)dV_{x}dV_{y}\\
&=\lim\limits_{m\rightarrow +\infty}\int_{\mathbb{B}^n}\int_{\mathbb{B}^n}h_m^1\left(x\right)G_k^{-1}\left(x,y\right)h_m^1\left(y\right)dV_{x}dV_{y}+
\lim\limits_{m\rightarrow +\infty}\int_{\mathbb{B}^n}\int_{\mathbb{B}^n}h_m^2\left(x\right)G_k^{-1}\left(x,y\right)h_m^2\left(y\right)dV_{x}dV_{y}.\\
\end{split}\end{equation*}}

Then it follows from the Hardy-Littlewood-Sobolev inequality \eqref{BHLS} in hyperbolic space  that
\begin{equation}\begin{split}
&\lim\limits_{m\rightarrow +\infty}\int_{\mathbb{B}^n}\int_{\mathbb{B}^n}h_m\left(x\right)G_k^{-1}\left(x,y\right)h_m\left(y\right)dV_{x}dV_{y}\\
&\ \ \leq C_{n,k,\frac{2n}{n-2k}}^{-1}\lim\limits_{m\rightarrow +\infty}\left(\int_{\mathbb{B}^n}|h_m^1|^{\frac{2n}{n+2k}}\right)^{\frac{n+2k}{n}}+C_{n,k,\frac{2n}{n-2k}}^{-1}\lim\limits_{m\rightarrow +\infty}\left(\int_{\mathbb{B}^n}|h_m^1|^{\frac{2n}{n+2k}}\right)^{\frac{n+2k}{n}}\\
&\ \ =C_{n,k,\frac{2n}{n-2k}}^{-1}\lambda^{\frac{n+2k}{n}}+C_{n,k,\frac{2n}{n-2k}}^{-1}\left(1-\lambda\right)^{\frac{n+2k}{n}}\\
&\ \ <C_{n,k,\frac{2n}{n-2k}}^{-1},
\end{split}\end{equation}
which is a contradiction with $$\lim\limits_{m\rightarrow +\infty}\int_{\mathbb{B}^n}\int_{\mathbb{B}^n}h_m\left(x\right)G_k^{-1}\left(x,y\right)h_m\left(y\right)dV_{x}dV_{y}=C_{n,k,\frac{2n}{n-2k}}^{-1}.$$
Thus we complete the proof of the Proposition \ref{protight}.
$\hfill\square$
\medskip

\subsection{$\{f_m\}$ converges strongly to $f_0\neq 0$}\label{subconvergence}

\begin{proposition}\label{proconvergence}
If $\{f_m\}$ is tight and  radially decreasing minimizing sequence for minimizing  problem \eqref{minimizing}, then $\{f_m\}$ converges strongly to a function $f_0\neq 0$ in $H(\mathbb{B}^n)$.
\end{proposition}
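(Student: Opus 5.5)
We follow the standard second concentration-compactness argument of Lions, adapted to $\mathbb{B}^n$, using the strict inequality $C_{n,k,\frac{2n}{n-2k}}<S_{n,k}$ of \cite{LY22} for $n\geq 2k+2$.

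\textbf{Step 1: weak limit and local regularity.} Since $\|f_m\|_{H(\mathbb{B}^n)}$ is bounded, we pass to a subsequence with $f_m\rightharpoonup f_0$ weakly in $H(\mathbb{B}^n)$. We then show $H(\mathbb{B}^n)\subset W^{k,2}_{loc}(\mathbb{B}^n)$. On a compact set $K\subset\mathbb{B}^n$ the weight $\frac{2}{1-|x|^2}$ is bounded above and below. The quadratic form $\|f\|_H^2$ controls $\int_K|\nabla^k g|^2dx$ modulo lower-order terms, where $g=(\frac{2}{1-|x|^2})^{\frac n2-k}f$. This follows from the conformal law $P_k f=(\frac{2}{1-|x|^2})^{-\frac n2-k}(-\Delta)^k g$, the subtracted term $\int|f|^2dV$, and the $L^{\frac{2n}{n-2k}}$ bound from \eqref{BPS} combined with interpolation. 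By Rellich we then get $f_m\to f_0$ in $L^2_{loc}$ and a.e., so $f_0$ is radially decreasing and nonnegative.

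\textbf{Step 2: measure decomposition.} Tightness (Proposition \ref{protight}) lets us view $\mu_m=|f_m|^{\frac{2n}{n-2k}}dV$ as measures concentrating on a fixed compact $\overline{B^n(0,R_\epsilon)}$ up to $\epsilon$. Let $\nu_m=|G_k^{1/2}f_m|^2dV$. Up to subsequences, $\mu_m\rightharpoonup\mu$ and $\nu_m\rightharpoonup\nu$ with $\mu(\mathbb{B}^n)=1$, and we aim for the hyperbolic version of Lions' second lemma (Lemma \ref{con2}):
\[
\mu=|f_0|^{\frac{2n}{n-2k}}dV+\sum_j\mu_j\delta_{x_j},\qquad \nu\geq |G_k^{1/2}f_0|^2dV+\sum_j\nu_j\delta_{x_j},\qquad \nu_j\geq S_{n,k}\,\mu_j^{\frac{n-2k}{n}}.
\]
To prove it, apply the inequality to $\phi(f_m-f_0)$ for a cutoff $\phi$ supported near a point. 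By Brezis--Lieb, $\int|\phi|^{p}|f_m|^{p}dV-\int|\phi|^{p}|f_0|^{p}dV\to\int|\phi|^{p}d\mu-\int|\phi|^p|f_0|^pdV$ with $p=\frac{2n}{n-2k}$. Commutators $[P_k,\phi]$ and the Poincaré term $\int|\phi(f_m-f_0)|^2dV$ involve at most $k-1$ derivatives on compact sets, so they vanish in the limit by Step 1. Locally only the Euclidean Sobolev constant survives: pulling back conformally to the ball, a small neighborhood is nearly Euclidean, so $S_{n,k}$ appears rather than $C_{n,k,\frac{2n}{n-2k}}$. Because the $f_m$ are radial and decreasing, any atom must sit at the origin. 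This is a convenient simplification but not essential.

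\textbf{Step 3: ruling out atoms.} With $C=C_{n,k,\frac{2n}{n-2k}}$, we have
\[
C=\lim\|f_m\|_H^2\geq \nu(\mathbb{B}^n)\geq C\Big(\int|f_0|^{p}dV\Big)^{\frac{2}{p}}+S_{n,k}\sum_j\mu_j^{\frac{2}{p}}.
\]
Since $1=\int|f_0|^pdV+\sum_j\mu_j$ and $t\mapsto t^{2/p}$ is strictly concave, this gives
\[
C\geq C\Big(\int|f_0|^pdV\Big)^{2/p}+S_{n,k}\sum_j\mu_j^{2/p}\geq C\Big(\int|f_0|^p+\sum_j\mu_j\Big)^{2/p}=C.
\]
The middle inequality uses $S_{n,k}\geq C$, and it is strict unless all $\mu_j=0$, because $S_{n,k}>C$. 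Equality therefore forces every $\mu_j=0$ and $\int|f_0|^pdV=1$. In particular $f_0\neq0$ and $f_m\to f_0$ in $L^{p}$.

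\textbf{Step 4: strong convergence in $H$.} We now have $\|f_0\|_H^2\geq C$ from \eqref{BPS} and $\|f_0\|_H^2\leq\liminf\|f_m\|_H^2=C$ by weak lower semicontinuity. So the norms converge, and in the Hilbert space $H(\mathbb{B}^n)$ weak convergence plus convergence of norms gives strong convergence.

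The main obstacle is Step 2, specifically localizing the nonlocal-in-spirit quadratic form $\|\cdot\|_H^2$. One must show that the Poincaré subtraction and all lower-order commutator terms of the $2k$-th order operator $P_k$ are compact on compact sets, so that only $S_{n,k}$ survives at concentration points. I would first try the conformal change to $(-\Delta)^k$ on the Euclidean ball together with Step 1, where the computation reduces to the Euclidean higher-order argument of Lions.
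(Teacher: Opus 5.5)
Your proposal is correct, but it distributes the ingredients differently from the paper.

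\textbf{The paper's route.} Its concentration lemma (Lemma \ref{con2}) carries the Poincar\'e--Sobolev constant $C_{n,k}$ at the atoms, not $S_{n,k}$. The lemma is used only under the hypothesis $f_0=0$, where it forces a single atom, $\mu=\delta_{x_0}$ and $\nu=C_{n,k}\delta_{x_0}$. The paper then applies the hyperbolic Sobolev inequality \eqref{BSobolev} to $\psi f_m$, with the $L^2$ term killed by Vitali, and obtains a ratio at least $S_{n,k}$, contradicting $C_{n,k}<S_{n,k}$. Once $f_0\neq0$ is known, it concludes from Br\'ezis--Lieb, the Hilbert splitting $\|f_m\|_H^2=\|f_m-f_0\|_H^2+\|f_0\|_H^2+o(1)$, and strict superadditivity of $t\mapsto t^{\frac{n}{n-2k}}$. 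That last step is independent of the concentration analysis: it is the general principle that a minimizing sequence with nonzero weak limit converges strongly.

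\textbf{Your route.} You remove the Poincar\'e term inside the lemma, so $S_{n,k}$ already sits at each atom. A single concavity chain then excludes all atoms and gives $\int|f_0|^{p}dV=1$ at once, with $p=\frac{2n}{n-2k}$. Weak convergence plus convergence of norms finishes. This is more economical: there is no separate one-bubble analysis and no Br\'ezis--Lieb step in $H$. The paper's organization, by contrast, isolates the general principle above as a standalone step.

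\textbf{Two remarks on your Step 2.} First, no \emph{nearly Euclidean} approximation is needed for $S_{n,k}$ to appear. Inequality \eqref{BSobolev} holds on all of $\mathbb{B}^n$ with the sharp constant $S_{n,k}$ by exact conformal invariance, so it suffices that $\int|\phi(f_m-f_0)|^2dV\to0$.

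Second, your choice $\nu_m=|G_k^{1/2}f_m|^2dV$ has a clear advantage. It is a genuinely nonnegative measure, and it gives the exact splitting $\nu=|G_k^{1/2}f_0|^2dV+\widetilde\nu$, because the cross term vanishes as $G_k^{1/2}(f_m-f_0)\rightharpoonup0$ in $L^2$. The paper's density $P_k(f_m)f_m-\prod_{i=1}^{k}\frac{(2i-1)^2}{4}f_m^2$ is not pointwise nonnegative. The price is that $G_k^{1/2}$ is nonlocal. Localizing then requires a commutator bound for $[G_k^{1/2},\phi^2]$, not the finite expansion \eqref{pkspread} for $P_k$ that you cite.

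You can bypass the measures entirely. Take $\chi_1^2+\chi_2^2=1$ with $\chi_1\in C_c^\infty(\mathbb{B}^n)$ and $\chi_1=1$ on $B^n(0,R_\epsilon)$. Then
\[
\|g\|_H^2=\|\chi_1 g\|_H^2+\|\chi_2 g\|_H^2+\tfrac12\sum_{i=1}^{2}\langle[\chi_i,[\chi_i,G_k]]g,g\rangle .
\]
The last term comes from a differential operator of order $2k-2$ supported on a compact annulus, so it tends to $0$ for $g=f_m-f_0$ by your Step 1. Drop $\|\chi_2 g\|_H^2\geq 0$, apply \eqref{BSobolev} to $\chi_1 g$ (whose $L^2$ term also vanishes), and use tightness. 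Along a subsequence this gives
\[
\lim\|f_m-f_0\|_H^2\geq S_{n,k}\Bigl(\lim\int|f_m-f_0|^{p}dV\Bigr)^{2/p}.
\]
Combined with Br\'ezis--Lieb and the splitting $\|f_m\|_H^2=\|f_0\|_H^2+\|f_m-f_0\|_H^2+o(1)$, this is all your Step 3 needs.
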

 Before proving the Proposition \ref{proconvergence}, we establish the 
second Lions' concentration compactness Lemma on the hyperbolic space $\mathbb{B}^n$ for the high order critical Poincar\'e-Sobolev inequality.

Since $\|f_m\|_{H\left(\mathbb{B}^n\right)}$ is uniformly bounded, we can assume that $f_m\to f_0$ weakly in $H\left(\mathbb{B}^n\right)$. Pick $\phi \in C_{c}^{\infty}\left(\mathbb{B}^n\right)$ satisfying $\phi|_{B^n\left(0, 2R\right)}=1$ and $0\leq\phi\leq1$,
applying the Poincar\'e-Sobolev inequality on hyperbolic space $\mathbb{B}^n$ into $f_m\phi$, we see that
$$\left(\int_{\mathbb{B}^n}P_k\left(f_m\phi\right)\left(f_m\phi\right)dV-\prod_{i=1}^{k}\frac{\left(2i-1\right)^2}{4}\int_{\mathbb{B}^n}|f_m \phi|^2dV\right)\geq C_{n,k}\left(\int_{\mathbb{B}^n}|f_m\phi|^{\frac{2n}{n-2k}}\right)^{\frac{n-2k}{n}}.$$
Since $\phi$ has the compact support, hence we can deduce that there exists $C_{n,k, supp\left(\phi\right)}>0$ such that
$$\left(\int_{\mathbb{B}^n}P_k\left(f_m\phi\right)\left(f_m\phi\right)dV-\prod_{i=1}^{k}\frac{\left(2i-1\right)^2}{4}\int_{\mathbb{B}^n}|f_m \phi|^2dV\right)\geq C_{n,k, supp \left(\phi\right)}\int_{\mathbb{B}^n}|f_m\phi|^2dV.$$
Then we have that $$\left(\int_{\mathbb{B}^n}P_k\left(f_m\phi\right)\left(f_m\phi\right)dV-\prod_{i=1}^{k}\frac{\left(2i-1\right)^2}{4}\int_{\mathbb{B}^n}|f_m \phi|^2dV\right)$$ is equivalent to $$\int_{\mathbb{B}^n}P_k\left(f_m\phi\right)\left(f_m\phi\right)dV.$$
This deduces that $f_m\phi$ is bounded in high order Sobolev space $W^{k,2}\left(\mathbb{B}^n\right)$ which implies that $f_m$ is bounded in $W^{k,2}_{loc}\left(\mathbb{B}^n\right)$. That is, $$H\left(\mathbb{B}^n\right)\subset W^{k,2}_{loc}\left(\mathbb{B}^n\right).$$

The compact embedding theorem on $\mathbb{B}^n$ deduces  that $f_m\to f_0$ strongly in  $W^{t,2}_{loc}\left(\mathbb{B}^n\right)$ for $0<t<k$ so that $f_m\left(x\right)\to f_0\left(x\right), a.e.\ x\in \mathbb{B}^n$.

\medskip
We  prove the following second concentration compactness principle on the hyperbolic ball $\mathbb{B}^n$ through similar idea in  \cite{Lions1}.
\begin{lemma}\label{con2}
	Let  $$\mu_m:=|f_m|^{\frac{2n}{n-2k}}dV,\ \ \nu_m:=\left( P_k\left(f_m\right)f_m-\prod_{i=1}^{k}\frac{\left(2i-1\right)^2}{4}|f_m|^2\right) dV.$$
	Then $\mu_m,\nu_m$ converge weakly in the sense of measure to some bounded nonnegative measures $\mu$, $\nu$ on $\mathbb{B}^n$. We also assume that $\{f_m\}$ converge weakly to $f_0$  in $H\left(\mathbb{B}^n\right)$  and $\{|f_m|^{\frac{2n}{n-2k}}\}$ is tight. 
     Then there exists  at most countable set of points $\{x_i\}_{i\in I} \subset\mathbb{B}^n$ such that:
	$$\nu\geq\left( P_k\left(f_0\right)f_0-\prod_{i=1}^{k}\frac{\left(2i-1\right)^2}{4}|f_0|^2\right)dV +\sum_{i\in I}v^i \delta_{x_i}$$
	and
	$$\mu= |f_0|^{\frac{2n}{n-2k}}dV+\sum_{i\in I}u^i \delta_{x_i},$$
	where $v^i=\nu\{x_i\}$, $u^i=\mu\{x_i\}$. Furthermore, we also have $v^i\geq C_{n,k}\left(u^i\right)^{\frac{n-2k}{n}}$.
\end{lemma}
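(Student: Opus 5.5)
We follow Lions' second concentration-compactness argument, adapted to the quadratic form $\|\cdot\|_{H(\mathbb{B}^n)}$. Set $w_m=f_m-f_0$, so $w_m\rightharpoonup 0$ in $H(\mathbb{B}^n)$. By $H(\mathbb{B}^n)\subset W^{k,2}_{loc}(\mathbb{B}^n)$ and the compact embedding, $w_m\to 0$ in $W^{t,2}_{loc}$ for $0\le t<k$ and almost everywhere.

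\emph{Splitting off $f_0$.} By the Brezis--Lieb lemma, $|f_m|^{\frac{2n}{n-2k}}-|w_m|^{\frac{2n}{n-2k}}-|f_0|^{\frac{2n}{n-2k}}\to 0$ in $L^1(dV)$. Hence $\mu=|f_0|^{\frac{2n}{n-2k}}dV+\bar\mu$, where $\bar\mu$ is the weak limit of $|w_m|^{\frac{2n}{n-2k}}dV$. For the energy we write
$$\nu_m=\Big(P_k(w_m)w_m-\prod_{i=1}^{k}\tfrac{(2i-1)^2}{4}w_m^2\Big)dV+\text{(cross terms)}+\Big(P_k(f_0)f_0-\prod_{i=1}^{k}\tfrac{(2i-1)^2}{4}f_0^2\Big)dV .$$
Tested against $\varphi\in C_c(\mathbb{B}^n)$, the cross terms tend to $0$ by weak convergence. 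Thus $\nu=\nu_0+\bar\nu$, with $\nu_0$ the energy density of $f_0$ and $\bar\nu$ the weak limit of the energy density of $w_m$. We interpret these densities in the integrated-by-parts form $\int \varphi\,\nu_m:=\int G_k^{1/2}$-type bilinear expressions, or equivalently via the local expression of $P_k$ as a divergence-form operator of order $2k$. The inequality $\nu\ge\nu_0+\sum v^i\delta_{x_i}$ then reduces to statements about $\bar\nu$.

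\emph{Reverse Sobolev inequality for $\bar\mu,\bar\nu$.} Fix $\varphi\in C_c^\infty(\mathbb{B}^n)$ and apply \eqref{BPS} to $\varphi w_m$. Expanding $P_k(\varphi w_m)(\varphi w_m)$ with the Leibniz rule, we get $\varphi^2P_k(w_m)w_m$ plus terms containing at least one derivative of $\varphi$ and hence at most $2k-1$ derivatives on $w_m$ in total, with one factor of order at most $k-1$ after integration by parts. Since $\varphi$ has compact support, all these lower-order terms, together with the term $\int\varphi^2w_m^2\,dV$, tend to $0$ by the local strong convergence in $W^{k-1,2}$. This yields
$$\int\varphi^2\,d\bar\nu\ \ge\ C_{n,k}\Big(\int|\varphi|^{\frac{2n}{n-2k}}d\bar\mu\Big)^{\frac{n-2k}{n}}\qquad\forall\varphi\in C_c^\infty(\mathbb{B}^n).$$
Tightness ensures no mass is lost at the boundary sphere, so $\bar\mu$ and $\bar\nu$ are finite measures on $\mathbb{B}^n$.

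\emph{Atomic structure.} The displayed inequality has exponent $\frac{n}{n-2k}>1$ relating $\bar\mu$ to $\bar\nu$. Lions' lemma (Lemma I.2 of \cite{Lions1}) then applies verbatim: it is purely measure-theoretic, using only that $\bar\nu$ is finite, via a Radon--Nikodym argument. It gives $\bar\mu=\sum_{i\in I}u^i\delta_{x_i}$ with $I$ at most countable, because the finite measure $\bar\nu$ has at most countably many atoms of positive mass. Taking $\varphi$ concentrating at $x_i$ gives $\bar\nu(\{x_i\})\ge C_{n,k}(u^i)^{\frac{n-2k}{n}}$. Since $\nu\ge\nu_0+\bar\nu\ge\nu_0+\sum\bar\nu(\{x_i\})\delta_{x_i}$ and $\nu_0$ has no atoms, $v^i=\nu\{x_i\}=\bar\nu\{x_i\}$, which gives the claim.

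\emph{Main obstacle.} The delicate step is the commutator estimate: showing that all terms of $\int P_k(\varphi w)\varphi w\,dV-\int\varphi^2P_k(w)w\,dV$ are controlled by local $W^{k-1,2}$ norms of $w$. Here $P_k$ has variable (hyperbolic) coefficients, but on $\operatorname{supp}\varphi$ these are smooth and bounded, so we would pass to Euclidean coordinates using the conformal law $P_k f=(\frac{2}{1-|x|^2})^{-\frac n2-k}(-\Delta)^k((\frac{2}{1-|x|^2})^{\frac n2-k}f)$ and expand $(-\Delta)^k$ of a product. A secondary point is that $\nu_m$ is not a priori nonnegative pointwise. We would address this by defining the densities through the quadratic form, noting that the limiting $\bar\nu$ is nonnegative by the displayed inequality.
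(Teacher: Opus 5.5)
Your proposal is correct. The $\mu$-part is the same as the paper's: Brezis--Lieb for $w_m=f_m-f_0$, the Poincar\'e--Sobolev inequality applied to $\varphi w_m$ with the commutator terms killed by local compactness in $W^{k-1,2}$, and Lemma~\ref{con2ingre} applied to the defect measures.

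The real difference is how you obtain the lower bound on $\nu$.

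\textbf{The paper's route.} The paper never splits $\nu$. It applies the localized inequality to $\varphi f_m$ itself and passes to the limit while keeping the cross terms in $f_0$ (the terms I and II after \eqref{intbypart2}). It then inserts the shrinking cutoffs $\varphi_\epsilon^{x_i}$ and shows those $f_0$-terms vanish via H\"older and the embeddings \eqref{sobolev2}. This yields $\nu\{x_i\}\ge C_{n,k}(u^i)^{\frac{n-2k}{n}}$. The absolutely continuous bound $\nu\ge\nu_0$ is obtained separately by lower semicontinuity.

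\textbf{Your route.} You use that the energy is quadratic. The cross terms are linear in $w_m$ and vanish under local weak convergence in $W^{k,2}$, so $\nu=\nu_0+\bar\nu$ exactly. Both the atomic structure and $v^i\ge C_{n,k}(u^i)^{\frac{n-2k}{n}}$ then come directly from Lemma~\ref{con2ingre} for $(\bar\mu,\bar\nu)$. The bound $\nu\ge\nu_0$ follows from $\bar\nu\ge 0$, which you correctly deduce from the reverse inequality, rather than from a lower-semicontinuity argument for a density that is not pointwise nonnegative. This is cleaner and gives a slightly stronger conclusion: an identity with a nonnegative defect measure.

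\textbf{What is relocated, not avoided.} Once you read $\nu_0$ in integrated-by-parts form, the claim that $\nu_0$ has no atoms is exactly $\nu_0(\varphi_\epsilon^{x_i})\to 0$. Proving it requires the same H\"older--Sobolev computation with $\|\nabla_{\mathbb{H}}^\gamma\varphi_\epsilon^{x_i}\|_{L^{n/\gamma}}\lesssim 1$, so you should include it. It is needed, since otherwise $v^i=\nu\{x_i\}$ would differ from $\bar\nu\{x_i\}$.

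\textbf{Minor correction.} Tightness of $|f_m|^{\frac{2n}{n-2k}}$ plays no role in finiteness here. $\bar\mu$ is finite by the uniform $L^{\frac{2n}{n-2k}}$ bound. For $\bar\nu$, local finiteness on $\mathbb{B}^n$ is all Lions' lemma needs, since it only uses countability of atoms and a local density argument.
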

Lemma \ref{con2} is proved by applying the following lemma \ref{con2ingre} whose original version is in $\mathbb{R}^n$. Since the crucial ingredient is valid in an arbitrary measure space, we can prove  Lemma \ref{con2ingre} through the same proof process of Lemma 1.2 in \cite{Lions1}. So we omit the proof the Lemma \ref{con2ingre} here.
\begin{lemma}\label{con2ingre}
	Let $\widetilde{\mu},\widetilde{\nu}$ be two bounded nonnegative measures on $\mathbb{B}^n$ satisfying for some constant $C_0\geq 0$
	\begin{equation}
	\left(\int_{\mathbb{B}^n}|\varphi|^q d\widetilde{\mu}\right)^{\frac{1}{q}}\leq C_0 \left(\int_{\mathbb B^n}|\varphi|^p d\widetilde{\nu}\right)^{\frac{1}{p}}, \ \ \text{for any}\  \varphi\in C_c^\infty\left(\mathbb B^n\right)
	\end{equation}
	where $1\leq p< q\leq+\infty$. Then there exist an at most countable set $I$, families $\{x_i\}$ of distinct points in $\mathbb B^n$, $\{v^i\}$ in $[0,+\infty]$ such that
	\begin{equation}
	\widetilde{\mu}=\sum_{i\in I} v^i \delta_{x_i},\ \ \ \widetilde{\nu}\geq C_0^{-p}\sum_{i\in I} \left(v^i\right)^{\frac{p}{q}}\delta_{x_i}.
	\end{equation}
\end{lemma}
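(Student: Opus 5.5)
The plan is to run the proof of Lemma 1.2 in \cite{Lions1} essentially verbatim, since it only uses measure theory. The measure space is $\mathbb{B}^n$ with its Borel $\sigma$-algebra and the measures $\widetilde{\mu},\widetilde{\nu}$. First I extend the hypothesis from $\varphi\in C_c^\infty(\mathbb{B}^n)$ to bounded Borel functions. For any Borel set $E$, approximate $\chi_E$ by smooth compactly supported $\varphi_j$ with $0\le\varphi_j\le1$. Because both measures are bounded and Radon, inner and outer regularity together with dominated convergence give
\[
\widetilde{\mu}(E)^{1/q}\leq C_0\,\widetilde{\nu}(E)^{1/p}\qquad\text{for every Borel } E\subset\mathbb{B}^n .
\]
For $q=+\infty$ the left side is read as an $L^\infty(\widetilde{\mu})$-norm, and the same approximation gives $\widetilde{\mu}(E)>0\Rightarrow \widetilde{\nu}(E)\ge C_0^{-p}$. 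From here on I use only this set inequality.

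Next I show that $\widetilde{\mu}\ll\widetilde{\nu}$ and that $\widetilde{\mu}$ is purely atomic. Absolute continuity is immediate from the set inequality. By Radon--Nikodym write $\widetilde{\mu}=g\,\widetilde{\nu}$ with $g\in L^1(\widetilde{\nu})$, $g\ge0$. Applying the set inequality to $E\cap\{g\ge t\}$ yields
\[
t\,\widetilde{\nu}(E\cap\{g\ge t\})\le \widetilde{\mu}(E\cap\{g\geq t\})\le C_0^{q}\,\widetilde{\nu}(E\cap\{g\geq t\})^{q/p}.
\]
Since $q/p>1$, this forces $\widetilde{\nu}(E\cap\{g\ge t\})\ge (t C_0^{-q})^{p/(q-p)}$ whenever it is positive, i.e. a uniform lower bound independent of $E$.

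Now I use the $\widetilde{\nu}$-Lebesgue differentiation, or equivalently the decomposition of $\widetilde{\nu}$ into atomic and diffuse parts; this is the main obstacle. Let $\widetilde{\nu}_d$ be the diffuse part of $\widetilde{\nu}$. A diffuse bounded Radon measure on a separable metric space can split any set of positive measure into pieces of arbitrarily small measure, since $r\mapsto\widetilde{\nu}_d(E\cap B_{\mathbb{H}}(x,r))$ is continuous. This contradicts the uniform lower bound above unless $\widetilde{\mu}(\{g>0\}\setminus A)=0$, where $A$ is the set of atoms of $\widetilde{\nu}$. For the case $q=\infty$ I argue the same way directly with the bound $\widetilde{\nu}(E)\ge C_0^{-p}$. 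Since $\widetilde{\nu}$ is bounded, $A$ is at most countable. Hence
\[
\widetilde{\mu}=\sum_{i\in I}v^i\delta_{x_i},\qquad v^i=\widetilde{\mu}(\{x_i\}),
\]
with $\{x_i\}\subset A$ distinct and $I$ at most countable.

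Finally I apply the set inequality to $E=\{x_i\}$, which gives $\widetilde{\nu}(\{x_i\})\ge C_0^{-p}(v^i)^{p/q}$. Because the points are distinct and $\widetilde{\nu}\ge\sum_i\widetilde{\nu}(\{x_i\})\delta_{x_i}$, this yields $\widetilde{\nu}\ge C_0^{-p}\sum_{i\in I}(v^i)^{p/q}\delta_{x_i}$, which is the claim. The only place where the hyperbolic setting enters is the regularity and separability of $\mathbb{B}^n$ used in the approximation step and in splitting diffuse mass. Both hold because $\mathbb{B}^n$ is a locally compact separable metric space, with the hyperbolic and Euclidean topologies coinciding.
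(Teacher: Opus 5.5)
Your proof is correct, but its key step differs from the one the paper relies on. The paper gives no proof and defers to Lions' Lemma 1.2. There, after the set inequality and $\widetilde{\mu}=g\,\widetilde{\nu}$, one identifies $g(x)=\lim_{r\to0}\widetilde{\mu}(B(x,r))/\widetilde{\nu}(B(x,r))$ for $\widetilde{\nu}$-a.e.\ $x$ via the Lebesgue--Besicovitch differentiation theorem. This limit is bounded by $C_0^q\,\widetilde{\nu}(\{x\})^{q/p-1}$, which vanishes at every non-atom.

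You replace differentiation by two facts. First, on subsets of $\{g\ge t\}$, $\widetilde{\nu}$ takes no values in $(0,c_t)$, where $c_t=(tC_0^{-q})^{p/(q-p)}$. Second, diffuse mass can be split into arbitrarily small positive pieces. What your route buys:
\begin{itemize}
\item it is more elementary, with no covering theorem;
\item it works verbatim for finite Borel measures on any separable metric space, and treats $q=+\infty$ by the same mechanism;
\item it shows exactly where the topology enters. The paper's remark that the ingredient is ``valid in an arbitrary measure space'' glosses over this, since separability is needed to turn measure-theoretic atoms into point masses.
\end{itemize}
Lions' route is shorter once the differentiation theorem is available. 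It is available here, because $\mathbb{B}^n$ carries the Euclidean topology and Borel sets.

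One justification needs repair. The map $r\mapsto\widetilde{\nu}_d(E\cap B_{\mathbb{H}}(x,r))$ need not be continuous, since a diffuse measure can charge an entire geodesic sphere. You only need its behaviour as $r\to0^+$. Choose $x$ in the support of the restriction of $\widetilde{\nu}_d$ to $E$; this support is nonempty by the Lindel\"of property when $\widetilde{\nu}_d(E)>0$. Then $\widetilde{\nu}_d(E\cap B_{\mathbb{H}}(x,r))>0$ for every $r>0$, and it tends to $\widetilde{\nu}_d(E\cap\{x\})=0$. This gives subsets of arbitrarily small positive measure, which is all your contradiction with $c_t$ requires.
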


{\it Proof of Lemma \ref{con2}.} Through calculating, for  any $ \varphi\in C_c^\infty\left(\mathbb B^n\right) $, we have the following formula{\tiny }

\begin{equation}\begin{split}\label{pkspread}
&\int_{\mathbb{B}^n}P_k\left(f_m \varphi \right)f_m \varphi dV-\prod_{i=1}^{k}\frac{\left(2i-1\right)^2}{4}\int_{\mathbb{B}^n}|f_m \varphi|^2dV\\
&\ \ =\int_{\mathbb{B}^n} \sum_{l=1}^{k}c_l\left(-\Delta_{\mathbb{H}}\right)^l\left( f_m \varphi\right) \left( f_m \varphi\right)dV-\prod_{i=1}^{k}\frac{\left(2i-1\right)^2}{4}\int_{\mathbb{B}^n}|f_m \varphi|^2dV\\
&\ \ =\int_{\mathbb{B}^n}P_k\left(f_m \right)f_m \varphi^2 dV-\prod_{i=1}^{k}\frac{\left(2i-1\right)^2}{4}\int_{\mathbb{B}^n}|f_m \varphi|^2dV+\int_{\mathbb{B}^n} \sum_{l=1}^{k}c_l\sum_{j= 1}^{2l}\nabla_{\mathbb{H}}^{2l-j} f_m \nabla_{\mathbb{H}}^j \varphi \left(f_m \varphi\right) dV.\\	
\end{split}\end{equation}

If we denote by $g_m=f_m-f_0$, then $\|g_m\|_{L^{\frac{2n}{n-2k}}}$ and $\|g_m\|_{H}$ are uniformly bounded. let
$$\widetilde{\mu_m}:=|g_m|^{\frac{2n}{n-2k}}dV,\ \ \widetilde{\nu_m}:=\left( P_k\left(g_m\right)g_m-\prod_{i=1}^{k}\frac{\left(2i-1\right)^2}{4}|g_m|^2\right) dV,$$
then there exist two bounded measure $\widetilde{\mu}$ and $\widetilde{\nu}$ such that $\widetilde{\mu_m},\widetilde{\nu_m}$ converge weakly in the sense of measure to  $\widetilde{\mu}$, $\widetilde{\nu}$  on $\mathbb{B}^n$.  Poincar\'e-Sobolev inequality, formula \eqref{pkspread} and compact embedding theorem deduce that 
\begin{equation}\begin{split}
&\large\left(\int_{\mathbb{B}^n}|g_m\varphi|^{\frac{2n}{n-2k}}dV\large \right)^{\frac{n-2k}{n}} \\
&\ \ \leq C_{n,k}  \int_{\mathbb{B}^n}P_k\left(g_m\varphi \right)\left(g_m\varphi \right) dV-\prod_{i=1}^{k}\frac{\left(2i-1\right)^2}{4}\int_{\mathbb{B}^n}|g_m\varphi|^2dV\\
&\ \ =\int_{\mathbb{B}^n}P_k\left(g_m \right)g_m \varphi^2 dV-\prod_{i=1}^{k}\frac{\left(2i-1\right)^2}{4}\int_{\mathbb{B}^n}|g_m \varphi|^2dV,\\
\end{split}\end{equation}
when $m\to\infty$.

Theorefore, for any $\varphi\in C_c^\infty\left(\mathbb B^n\right)$, we have
\begin{equation*}
\left(\int_{\mathbb{B}^n}|\varphi|^{\frac{2n}{n-2k}} d\widetilde{\nu}\right)^{\frac{n-2k}{2n}}\leq C_0 \left(\int_{\mathbb B^n}|\varphi|^2 d\widetilde{\mu}\right)^{\frac{1}{2}},
\end{equation*}
According to Lemma \ref{con2ingre},
$\widetilde{\nu}=\sum_{i\in I} u^i \delta_{x_i}$.
Br\'{e}zis-Lieb Lemma yields that  for any  $\varphi\in C_c^\infty\left(\mathbb B^n\right)$,
\begin{equation*}
\int_{\mathbb{B}^n}|f_m\varphi|^{\frac{2n}{n-2k}}dV-\int_{\mathbb{B}^n}|g_m\varphi|^{\frac{2n}{n-2k}}dV\to \int_{\mathbb{B}^n}|f_0\varphi|^{\frac{2n}{n-2k}}dV.
\end{equation*}
Therefore, we have $$\mu= |f_0|^{\frac{2n}{n-2k}}+\sum_{i\in I}u^i \delta_{x_i}.$$

Next, for fomula \eqref{pkspread}, by integrating by parts, we have
\begin{equation}\begin{split}\label{intbypart}
&\int_{\mathbb{B}^n} \sum_{l=1}^{k}c_l\sum_{j= 1}^{2l}\nabla_{\mathbb{H}}^{2l-j} f_m \nabla_{\mathbb{H}}^j \varphi \left(f_m \varphi\right) dV\\
&\ \ =\int_{\mathbb{B}^n} \sum_{l=1}^{k}\sum_{\alpha+\beta+\gamma+\delta=2l}c_{l,\alpha,\beta,\gamma,\delta}\nabla_{\mathbb{H}}^\alpha f_m \nabla_{\mathbb{H}}^{\beta} \varphi\nabla_{\mathbb{H}}^\gamma f_m \nabla_{\mathbb{H}}^{\delta} \varphi dV,\\
\end{split}\end{equation}
where $0\leq\gamma\leq\alpha\leq l$, $\alpha+\gamma\leq2l-1$.

Furthermore,
\begin{equation}\begin{split}\label{intbypart2}
&\int_{\mathbb{B}^n} \sum_{l=1}^{k}\sum_{\alpha+\beta+\gamma+\delta=2l}c_{l,\alpha,\beta,\gamma,\delta}\nabla_{\mathbb{H}}^\alpha f_m \nabla_{\mathbb{H}}^{\beta} \varphi\nabla_{\mathbb{H}}^\gamma f_m \nabla_{\mathbb{H}}^{\delta} \varphi \\ &
\ \ -\sum_{l=1}^{k}\sum_{\alpha+\beta+\gamma+\delta=2l}c_{l,\alpha,\beta,\gamma,\delta}\nabla_{\mathbb{H}}^\alpha f \nabla_{\mathbb{H}}^{\beta} \varphi\nabla_{\mathbb{H}}^\gamma f \nabla_{\mathbb{H}}^{\delta} \varphi dV\\
&\ \ \ \ =\int_{\mathbb{B}^n} \sum_{l=1}^{k}\sum_{\alpha+\beta+\gamma+\delta=2l}c_{l,\alpha,\beta,\gamma,\delta}\nabla_{\mathbb{H}}^\alpha f_m \nabla_{\mathbb{H}}^{\beta} \varphi\left(\nabla_{\mathbb{H}}^\gamma f_m-\nabla_{\mathbb{H}}^\gamma f\right) \nabla_{\mathbb{H}}^{\delta} \varphi \\
&\ \ \ \ \ \  +\sum_{l=1}^{k}\sum_{\alpha+\beta+\gamma+\delta=2l}c_{l,\alpha,\beta,\gamma,\delta}\left(\nabla_{\mathbb{H}}^\alpha f_m-\nabla_{\mathbb{H}}^\alpha f\right) \nabla_{\mathbb{H}}^{\beta} \varphi\nabla_{\mathbb{H}}^\gamma f \nabla_{\mathbb{H}}^{\delta} \varphi dV\\
&\ \ \ \ = \uppercase\expandafter{\romannumeral1}+\uppercase\expandafter{\romannumeral2}.
\end{split}\end{equation}
Since $f_m\to f$ strongly in $W^{t,2}_{loc}\left(\mathbb{B}^n\right)$ for $t<k$ and $\|f_m\|_{W^{k,2}_{loc}}\leq\|f_m\|_{H}\leq C $,
\begin{equation*}
\uppercase\expandafter{\romannumeral1}\leq \|\nabla_{\mathbb{H}}^\alpha f_m\|_{L^2_{loc}\left(\mathbb{B}^n\right)}  \|\nabla_{\mathbb{H}}^\alpha f_m-\nabla_{\mathbb{H}}^\alpha f\|_{L^2_{loc}\left(\mathbb{B}^n\right)} \to 0. \\
\end{equation*}
Since $f_m\to f$ weakly in $W^{t,2}_{loc}\left(\mathbb{B}^n\right)$
for $t\leq k$ and $\|f\|_{W^{k,2}_{loc}}\leq\|f\|_{H}\leq C $, then $\uppercase\expandafter{\romannumeral2}\to 0$.
Passing to the limit in \eqref{pkspread}, we find for any $\varphi\in C_c^\infty\left(\mathbb B^n\right)$,
\begin{equation*}\begin{split}
&\large\left(\int_{\mathbb{B}^n}|\varphi|^{\frac{2n}{n-2k}}d\mu \large \right)^{\frac{n-2k}{n}}\cdot \left(C_{n,k}\right)^{\frac{n-2k}{n}}\\
&\ \ \leq\int_{\mathbb B^n}|\varphi|^2 d\nu+\int_{\mathbb{B}^n} \sum_{l=1}^{k}\sum_{\alpha+\beta+\gamma+\delta=2l}c_{l,\alpha,\beta,\gamma,\delta}\nabla_{\mathbb{H}}^\alpha f \nabla_{\mathbb{H}}^{\beta} \varphi\nabla_{\mathbb{H}}^\gamma f \nabla_{\mathbb{H}}^{\delta} \varphi dV.
\end{split}\end{equation*}
Taking $\varphi_{\epsilon}^0=\varphi(\frac{x}{\epsilon})\in C_{c}^{\infty}(B_{\mathbb{H}}\left(0,\epsilon\right))$ satisfying $0\leq \varphi_{\epsilon}^{0}\leq 1$, $\varphi_{\epsilon}^0=1$ in $B_{\mathbb{H}}\left(0,\frac{\epsilon}{2}\right)$, we can easily check that
$|\nabla_{\mathbb{H}}\varphi_{\epsilon}^0|=|\frac{\left(1-|x|^2\right)}{2}\nabla_{\mathbb{R}^n}\varphi_{\epsilon}^0|\lesssim \epsilon^{-1}$. Define $\varphi_{\epsilon}^{x_i}$ by
$\varphi_{\epsilon}^{x_i}=\varphi_{\epsilon}^0(T_{x_i}(x))$, by the isometry of Mobius transformation $T_{x_i}$, we can immediately derive that
$\varphi_{\epsilon}^{x_i}\in C_{c}^{\infty}(B_{\mathbb{H}}\left(x_i,\epsilon\right))$ satisfying $0\leq \varphi_{\epsilon}^{x_i}\leq 1$, $\varphi_{\epsilon}^{x_i}=1$ in $B_{\mathbb{H}}\left(x_i,\frac{\epsilon}{2}\right)$ and $|\nabla_{\mathbb{H}}\varphi_{\epsilon}^{x_i}|\lesssim \epsilon^{-1}$. We apply the above
with $\varphi_{\epsilon}^{x_i}$ for sufficiently small $\epsilon>0$ and $i\in I$, then

\begin{equation*}\begin{split}
&\large\left(\int_{\mathbb{B}^n}|\varphi_{\epsilon}^{x_i}|^{\frac{2n}{n-2k}}d\mu \large \right)^{\frac{n-2k}{n}}\cdot C_{n,k}\\
&\ \ \leq\int_{\mathbb B^n}|\varphi_{\epsilon}^{x_i}|^2 d\nu\\
&\ \ \ \ +\int_{B_{\mathbb{H}\left(x_i,\epsilon\right)}}  \sum_{\alpha+\beta+\gamma+\delta=2l}c_{l,\alpha,\beta,\gamma,\delta}\epsilon^{-\left(\beta+\delta\right)}\nabla_{\mathbb{H}}^\alpha f \left(\nabla_{\mathbb{H}}^{\beta} \varphi\right)\left(\frac{T_{x_i}(x)}{\epsilon}\right)\nabla_{\mathbb{H}}^\gamma f \left(\nabla_{\mathbb{H}}^{\delta} \varphi\right)\left(\frac{T_{x_i}(x)}{\epsilon}\right) dV.
\end{split}\end{equation*}
Recall that we have the following Sobolev inequality on the hyperbolic space $\mathbb{B}^n$:
\begin{equation}\label{sobolev2}
\left(\int_{\mathbb{B}^n}|\nabla_{\mathbb{H}}^j f|^{q_j}dV\right)^{\frac{1}{q_j}}\leq\left(\int_{\mathbb{B}^n}|\nabla_{\mathbb{H}}^l f|^{2}dV\right)^{\frac{1}{2}},
\end{equation}
where $j=0,1,2,\cdots,l-1$, $q_j=\frac{2n}{n-2\left(l-j\right)}$. By using the H\"{o}lder inequality and inequality \eqref{sobolev2}, we have the following estimate:
\begin{equation*}\begin{split}
&\int_{B_{\mathbb{H}}\left(x_i,\epsilon\right)} \sum_{\alpha+\beta+\gamma+\delta=2l}c_{l,\alpha,\beta,\gamma,\delta}\epsilon^{-\left(\beta+\delta\right)}\nabla_{\mathbb{H}}^\alpha f \left(\nabla_{\mathbb{H}}^{\beta} \varphi\right)\left(\frac{T_{x_i}(x)}{\epsilon}\right)\nabla_{\mathbb{H}}^\gamma f \left(\nabla_{\mathbb{H}}^{\delta} \varphi\right)\left(\frac{T_{x_i}(x)}{\epsilon}\right) dV\\
&\ \ \leq  \sum_{\alpha+\beta+\gamma+\delta=2l}c_{l,\alpha,\beta,\gamma,\delta}\epsilon^{-\left(\beta+\delta\right)} \|\nabla_{\mathbb{H}}^\alpha f\|_{L^{\frac{2n}{n-2\left(l-\alpha\right)}}_{B_{\mathbb{H}}\left(x_i,\epsilon\right)}}
\| \left(\nabla_{\mathbb{H}}^{\beta} \varphi\right)\|_{L^{\frac{n}{\beta}}_{B_{\mathbb{H}}\left(x_i,\epsilon\right)}}
\|\nabla_{\mathbb{H}}^\gamma f\|_{L^{\frac{2n}{n-2\left(l-\gamma\right)}}_{B_{\mathbb{H}}\left(x_i,\epsilon\right)}}
\| \left(\nabla_{\mathbb{H}}^{\delta} \varphi\right) \|_{L^{\frac{n}{\gamma}}_{B_{\mathbb{H}}\left(x_i,\epsilon\right)}}
\\
&\ \ \leq \sum_{\alpha+\beta+\gamma+\delta=2l}c_{l,\alpha,\beta,\gamma,\delta}
\|\nabla_{\mathbb{H}}^\alpha f\|_{L^{\frac{2n}{n-2\left(l-\alpha\right)}}_{B_{\mathbb{H}}\left(x_i,\epsilon\right)}}
\|\nabla_{\mathbb{H}}^\gamma f\|_{L^{\frac{2n}{n-2\left(l-\gamma\right)}}_{B_{\mathbb{H}}\left(x_i,\epsilon\right)}}.
\end{split}\end{equation*}
Hence, we have
\begin{equation*}\begin{split}
&\large\left(\int_{B_{\mathbb{H}}\left(x_i,\epsilon\right)}|\varphi|^{\frac{2n}{n-2k}}d\mu \large \right)^{\frac{n-2k}{n}}\cdot C_{n,k}\\
&\ \ \leq\int_{B_{\mathbb{H}}\left(x_i,\epsilon\right)}|\varphi|^2 d\nu+ \sum_{l=1}^{k}\sum_{\alpha+\beta+\gamma+\delta=2l}c_{l,\alpha,\beta,\gamma,\delta}\|\nabla_{\mathbb{H}}^\alpha f\|_{L^{\frac{2n}{n-2\left(l-\alpha\right)}}_{B_{\mathbb{H}}\left(x_i,\epsilon\right)}}
\|\nabla_{\mathbb{H}}^\gamma f\|_{L^{\frac{2n}{n-2\left(l-\gamma\right)}}_{B_{\mathbb{H}}\left(x_i,\epsilon\right)}}.
\end{split}\end{equation*}
Let $\epsilon\to0$, we can obtain that $v^i=\nu\{x_i\}\geq C_{n,k} \left(u^i\right)^{\frac{n-2k}{n}}>0$, $\nu\geq C_{n,k} \left(u^i\right)^{\frac{n-2k}{n}}\delta_{x_i}$ for any $i\in I$.  Therefore,
$$\nu\geq \sum_{i\in I}v^i\delta_{x_i}.$$
By the lower semi-continuity in the sense of measure, we have
$\nu\geq\left( P_k\left(f_0\right)f_0-\prod_{i=1}^{k}\frac{\left(2i-1\right)^2}{4}|f_0|^2\right)dV$.
Thus, we have
$$\nu\geq\left( P_k\left(f_0\right)f_0-\prod_{i=1}^{k}\frac{\left(2i-1\right)^2}{4}|f_0|^2\right)dV+\sum_{i\in I}v^i\delta_{x_i}.$$
We complete the proof of  Lemma \ref{con2}. $\hfill\square$
\vspace{0.8cm}

{\it Proof of Proposition \ref{proconvergence}.}
According to the weak lower-semicontinuity of the measure, we have $\mu\left(\mathbb{B}^n\right)\leq \lim\limits_{m\rightarrow +\infty} \mu_{m}\left(\mathbb{B}^n\right)=1$. From the tightness of $\mu_m$, we know that for any $\epsilon>0$, there exists $R_{\epsilon}$ such that
$$\int_{\mathbb{B}^n\left(0, R_\epsilon\right)}d\mu_m\geq1-\epsilon.$$
Taking $\psi \in C_c^{\infty}\left(\mathbb{B}^n\right)$ with $\psi|_{B^n\left(0, R_\epsilon\right)}=1$ and $0\leq\psi\leq1$,  we have
$$\mu\left(\mathbb{B}^n\right)\geq \int_{\mathbb{B}^n}\psi d\mu\geq\lim\limits_{m\rightarrow +\infty}\int_{\mathbb{B}^n\left(0, R_\epsilon\right)}d\mu_m\geq1-\epsilon.$$
Let $\epsilon\rightarrow 0$, we can obtain $\mu\left(\mathbb{B}^n\right)$=1.

Now we show that $f_0=0$ is impossible. According to the formula \eqref{pkspread}, for $\varphi\in C_c^{\infty}\left(\mathbb{B}^n\right)$ we have
\begin{equation*}\begin{split}
&\lim_{m\rightarrow\infty}\int_{\mathbb{B}^n}P_k\left(f_m\varphi\right)f_m \varphi dV\\
&\ \ =\lim_{m\rightarrow\infty}\int_{\mathbb{B}^n}P_k\left(f_m \right)f_m \varphi^2 dV+\lim_{m\rightarrow\infty}\int_{\mathbb{B}^n} \sum_{l=1}^{k}c_l\sum_{j= 1}^{2l}\nabla_{\mathbb{H}}^{2l-j} f_m \nabla_{\mathbb{H}}^j \varphi \left(f_m \varphi\right) dV \\
\end{split}\end{equation*}
In \eqref{intbypart} and \eqref{intbypart2},  we have argued that $$\lim\limits_{m\rightarrow\infty}\int_{\mathbb{B}^n} \sum_{l=1}^{k}c_l\sum_{j= 1}^{2l}\nabla_{\mathbb{H}}^{2l-j} f_m \nabla_{\mathbb{H}}^j \varphi \left(f_m \varphi\right) dV=\int_{\mathbb{B}^n} \sum_{l=1}^{k}c_l\sum_{j= 1}^{2l}\nabla_{\mathbb{H}}^{2l-j} f_0 \nabla_{\mathbb{H}}^j \varphi \left(f_0 \varphi\right) dV.$$
If $f_0=0$, there holds that
\begin{equation}\label{spread0}
\lim_{m\rightarrow\infty}\int_{\mathbb{B}^n}P_k\left(f_m\varphi\right)f_m \varphi dV\\
=\lim_{m\rightarrow\infty}\int_{\mathbb{B}^n}P_k\left(f_m \right)f_m \varphi^2 dV.
\end{equation}
We can further prove that $\nu\left(\mathbb{B}^n\right)=C_{n,k}$ when $f_0=0$.  For any $\epsilon>0$, choose $\varphi \in C_c^{\infty}\left(\mathbb{B}^n\right)$ with $\varphi|_{B^n\left(0, R_{\epsilon}\right)}=1$, then
\begin{equation*}\begin{split}
\nu\left(\mathbb{B}^n\right)&\geq \int_{\mathbb{B}^n}\varphi^2 d\nu\\
&=\lim_{m\rightarrow\infty}\int_{\mathbb{B}^n}P_k\left(f_m \right)f_m \varphi^2 dV-\prod_{i=1}^{k}\frac{\left(2i-1\right)^2}{4}\int_{\mathbb{B}^n}|f_m \varphi|^2dV \\
&=\lim_{m\rightarrow\infty}\int_{\mathbb{B}^n}P_k\left(f_m\varphi\right)f_m \varphi dV-\prod_{i=1}^{k}\frac{\left(2i-1\right)^2}{4}\int_{\mathbb{B}^n}|f_m \varphi|^2dV\\
&\geq C_{n,k} \left(\int_{\mathbb{B}^n}|f_m\varphi|^{\frac{2n}{n-2k}}dV\right)^{\frac{n-2k}{n}}\\
&\geq C_{n,k} \left(\int_{\mathbb{B}^n\left(0,R_\epsilon\right)}|f_m|^{\frac{2n}{n-2k}}dV\right)^{\frac{n-2k}{n}}\\
&\geq C_{n,k} \left(1-\epsilon\right)^{\frac{n-2k}{n}}.
\end{split}\end{equation*}

Combining  the weak lower-semicontinuity of the measure, $\nu\left(\mathbb{B}^n\right)= C_{n,k}$.

If $f_0=0$, according to the second concentration-compactness principle, $\mu\left(\mathbb{B}^n\right)=1$ and $\nu\left(\mathbb{B}^n\right)=C_{n,k}$, we can derive that
\begin{equation}\begin{split}
1=\mu\left(\mathbb{B}^n\right)&=\sum_{j=1}^{l}\mu_{i}\\
&\leq C_{n,k}^{-\frac{n}{n-2k}}\sum_{j=1}^{l}\left(\nu_{i}\right)^{\frac{n}{n-2k}}\\
&\leq C_{n,k}^{-\frac{n}{n-2k}}\left(\sum_{j=1}^{l}\nu_{i}\right)^{\frac{n}{n-2k}}\\
&=1.
\end{split}\end{equation}
This deduces that $\mu\left(\mathbb{B}^n\right)=\delta_{x_0}$ and $\nu\left(\mathbb{B}^n\right)=C_{n,k}\delta_{x_0}$. Pick $\psi\in C_{c}^{\infty}\left(\mathbb{B}^n\right)$ satisfying $\psi|_{B^n\left(x_0, R\right)}=1$, $supp\left(\psi\right)\subseteq B^n\left(x_0, 2R\right)$ and $0\leq\psi\leq1$, it follows from $\mu_{m}$ converge weakly to $ \mu$  and $\mu_{m}$ converge weakly to  $C_{n,k}\delta_{x_0}$ that
$$\lim\limits_{m\rightarrow +\infty}\int_{\mathbb{B}^n}|f_m\psi|^{\frac{2n}{n-2k}}dV=\lim\limits_{m\rightarrow +\infty}\int_{\mathbb{B}^n}|\psi|^{\frac{2n}{n-2k}}d\mu_m=\int_{\mathbb{B}^n}|\psi|^{\frac{2n}{n-2k}}d\mu=1$$
and
\begin{equation*}
\lim\limits_{m\rightarrow +\infty}\left(\int_{\mathbb{B}^n}P_k\left(f_m\right)f_m|\psi|^2-\prod_{i=1}^{k}\frac{\left(2i-1\right)^2}{4}|f_m \psi|^2dV\right)=C_{n,k}.
\end{equation*}

Combining the above estimates, we obtain
$$\lim\limits_{m\rightarrow +\infty}\frac{\left(\int_{\mathbb{B}^n}P_k\left(f_m\psi\right)\left(f_m\psi\right)dV-\prod_{i=1}^{k}\frac{\left(2i-1\right)^2}{4}\int_{\mathbb{B}^n}|f_m \psi|^2dV\right)}{\left(\int_{\mathbb{B}^n}|f_m\psi|^{\frac{2n}{n-2k}}dV\right)^{\frac{n-2k}{n}}}=C_{n,k}.$$
On the other hand, since $\psi$ has the compact support, $\int_{\mathbb{B}^n}|f_m\psi|^{\frac{2n}{n-2k}}dV\leq 1$, $\psi f_m$ converges to zero for almost $x\in \mathbb{B}^n$, hence it follows from the Vitali convergence theorem that
$$\lim\limits_{m\rightarrow +\infty}\prod_{i=1}^{k}\frac{\left(2i-1\right)^2}{4}\int_{\mathbb{B}^n}|f_m\psi|^{2}dV=0,$$
which implies $$\lim\limits_{m\rightarrow +\infty}\left(\int_{\mathbb{B}^n}P_k\left(f_m\psi\right)\left(f_m\psi\right)dV-\prod_{i=1}^{k}\frac{\left(2i-1\right)^2}{4}\int_{\mathbb{B}^n}|f_m \psi|^2dV\right)=\lim\limits_{m\rightarrow +\infty}\int_{\mathbb{B}^n}P_k\left(f_m\psi\right)\left(f_m\psi\right)dV.$$

Recall the classical Sobolev inequality on the hyperbolic space $\mathbb{B}^n$: for any $u\in C_{c}^{\infty}\left(\mathbb{B}^n\right)$, there holds
$$\int_{\mathbb{B}^n}P_k\left(u\right)u dV\geq S_{n,k}\left(\int_{\mathbb{B}^n}|u|^{\frac{2n}{n-2k}}\right)^{\frac{n-2k}{n}}.$$

Applying the Sobolev inequality into $\psi f_m$, we derive that
\begin{equation}\begin{split}
&\lim\limits_{m\rightarrow +\infty}\frac{\int_{\mathbb{B}^n}P_k\left(f_m\psi\right)\left(f_m\psi\right)dV-\prod_{i=1}^{k}\frac{\left(2i-1\right)^2}{4}\int_{\mathbb{B}^n}|f_m \psi|^2dV}{\left(\int_{\mathbb{B}^n}|f_m\psi|^{\frac{2n}{n-2k}}dV\right)^{\frac{n-2k}{n}}}\\
=&\lim\limits_{m\rightarrow +\infty}\frac{\int_{\mathbb{B}^n}P_k\left(f_m\psi\right)\left(f_m\psi\right)dV}{\left(\int_{\mathbb{B}^n}|f_m\psi|^{\frac{2n}{n-2k}}dV\right)^{\frac{n-2k}{n}}}\\
\geq &S_{n,k},
\end{split}\end{equation}
which is a contradiction with $C_{n,k}$ being strictly smaller than the Sobolev constant $S_{n,k}$. Therefore, $f_0\neq 0$.

Next, According to the Poincar\'e-Sobolev and the Br\'{e}zis-Lieb Lemma in Hilbert space $H\left(\mathbb{B}^n\right)$, we have that
\begin{equation}\begin{split}
1&=\lim\limits_{m\rightarrow +\infty}\int_{\mathbb{B}^n}|f_m|^{\frac{2n}{n-2k}}dV\\
&=\lim\limits_{m\rightarrow +\infty}\int_{\mathbb{B}^n}|f_m-f_0|^{\frac{2n}{n-2k}}dV+\int_{\mathbb{B}^n}|f_0|^{\frac{2n}{n-2k}}dV\\
&\leq \lim\limits_{m\rightarrow +\infty}\left(\frac{1}{C_{n,k}}\right)^{\frac{n}{n-2k}}\left(\|f_m-f_0\|^2_{H}\right)^{\frac{n}{n-2k}}+\left(\frac{1}{C_{n,k}}\right)^{\frac{n}{n-2k}}\left(\|f_0\|^2_{H}\right)^{\frac{n}{n-2k}}\\
&\leq \lim\limits_{m\rightarrow +\infty}\left(\frac{1}{C_{n,k}}\right)^{\frac{n}{n-2k}}\left(\|f_m-f_0\|^2_{H}+\|f_0\|^2_{H}\right)^{\frac{n}{n-2k}}\\
&\leq \lim\limits_{m\rightarrow +\infty}\left(\frac{1}{C_{n,k}}\right)^{\frac{n}{n-2k}}\left(\|f_m\|^2_{H}\right)^{\frac{n}{n-2k}}\\
&=1.
\end{split}\end{equation}
Therefore,  the above all inequalities must be equalities, which together with $f_0\neq 0$ gives that $f_m$ strongly converges to $f_0$ in $H\left(\mathbb{B}^n\right)$. This proves that $f_0$ is the extremal of Poincar\'e-Sobolev inequality on hyperbolic space $\mathbb{B}^n$.
\medskip

Thus we accomplish the proof of Theorem \ref{EPS}.
$\hfill\square$

\section{The proof of Theorem \ref{E-subHPS}}
We now start to prove that the high-order subcritical Poincar\'e-Sobolev inequality on Hyperbolic space $\mathbb{B}^n$:
$$\int_{\mathbb{B}^n}P_k\left(f\right)fdV-\prod_{i=1}^{k}\frac{\left(2i-1\right)^2}{4}\int_{\mathbb{B}^n}|f|^2dV\geq C_{n,k,q}\left(\int_{\mathbb{B}^n}|f|^{q}dV\right)^{\frac{2}{q}}$$
for $2<q<\frac{2n}{n-2k}$
admits an extremal function. Assume that $\{f_m\}$ is a minimizing sequence for the subcritical Poincar\'e-Sobolev inequality on hyperbolic space $\mathbb{B}^n$, that is
$$\int_{\mathbb{B}^n}|f_m|^{q}dV=1,\ \ \lim\limits_{m\rightarrow +\infty}\left(\int_{\mathbb{B}^n}P_k\left(f_m\right)fdV-\prod_{i=1}^{k}\frac{\left(2i-1\right)^2}{4}\int_{\mathbb{B}^n}|f_m|^2dV\right)=C_{n,k,q}.$$
Similar to the proof of the existence of Poincar\'e-Sobolev inequality on hyperbolic space $\mathbb{B}^n$ (Theorem \ref{EPS}), by Green representative formula and Riesz rearrangement inequality, we can assume that the minimizing sequence $\{f_m\}$ is radially decreasing about origin.
In order to prove the existence of extremal function, we only need to prove that
$f_m\rightarrow f_0$ strongly in $L^{q}\left(\mathbb{B}^n\right)$. Since $f_m$ is radially decreasing, by Helly theorem, we know that there exists $f_0\in L^{p}\left(\mathbb{B}^n\right)$ such that $f_m$ converges to $f_0$ for almost $x\in \mathbb{B}^n$. Through Poincar\'e-Sobolev inequality, we see that
$f_m$ is also bounded in $L^{\frac{2n}{n-2k}}\left(\mathbb{B}^n\right)$. This together with the Vitali-convergence theorem yields that
$$\lim\limits_{R\rightarrow 1}\lim\limits_{m\rightarrow +\infty}\int_{B^n\left(0,R\right)}|f_m|^qdV=\lim\limits_{R\rightarrow 1}\int_{B^n\left(0,R\right)}|f_0|^qdV=\int_{\mathbb{B}^n}|f_0|^qdV.$$
In order to prove that $f_m$ strongly converges to $f_0$ in $L^q\left(\mathbb{B}^n\right)$, it remains to show that
$$\lim\limits_{R\rightarrow 1}\lim\limits_{m\rightarrow +\infty}\int_{\mathbb{B}^n\setminus B^n\left(0,R\right)}|f_m|^qdV=0,$$
that is,
$$\lim\limits_{R\rightarrow 1}\lim\limits_{m\rightarrow +\infty}\int_{B^n\setminus B^n\left(0,R\right)}|f_m|^q\left(\frac{2}{1-|x|^2}\right)^ndx=0.$$
From the previous radial Lemma \ref{radial}, we see that there exists $D_{n,k,q_0}>0$ such that
$f_m\left(R\right)\leq \left(1-R\right)^{\frac{n-1}{q_0}}$, where $q_0$ is chosen as $q_0<q$. Then it is not difficult to check that
\begin{equation}\begin{split}
&\lim\limits_{R\rightarrow 1}\lim\limits_{m\rightarrow +\infty}\int_{B^n\setminus B^n\left(0,R\right)}|f_m|^q\left(\frac{2}{1-|x|^2}\right)^ndx\\
\leq &D_{n,k,q_0}\lim\limits_{R\rightarrow 1}\lim\limits_{m\rightarrow +\infty}\int_{B^n\setminus B^n\left(0,R\right)}\left(1-R\right)^{\frac{q\left(n-1\right)}{q_0}}\left(\frac{2}{1-|x|^2}\right)^ndx=0.
\end{split}\end{equation}

Then we accomplish the proof of Theorem \ref{E-subHPS}.

\bibliographystyle{amsalpha}

\end{document}